\newenvironment{tehtratk}%
             {\begin{list}{\arabic{enumi}.}{\usecounter{enumi}%
              \setlength{\labelsep}{0.5em}%
              \settowidth{\labelwidth}{\arabic{enumi}.}%
              \setlength{\leftmargin}{\labelwidth+\labelsep}}}%
             {\end{list}}
\newtheorem{Theorem}{Theorem}[section]
\newtheorem{Lemma}[Theorem]{Lemma}
\newtheorem{Corollary}[Theorem]{Corollary}
\newtheorem{Proposition}[Theorem]{Proposition}
\theoremstyle{definition}
\newtheorem{Definition}[Theorem]{Definition}
\newtheorem{Remark}[Theorem]{Remark}
\newtheorem*{Remarknonum}{Remark}
\numberwithin{equation}{section}
\newcommand{\mR}{\mathbb{R}}                    
\newcommand{\abs}[1]{\lvert #1 \rvert}          
\newcommand{\norm}[1]{\lVert #1 \rVert}         
\newcommand{\mH}{\mathcal{H}}
\newcommand{\Int}{\mbox{Int}}
\newcommand{\R}{\mathbb{R}}
\newcommand{\p}{\partial}
\newcommand{\eps}{\varepsilon}
\begin{document}

\begin{abstract}
We consider a conformally invariant version of the Calder\'on problem, where the objective is to determine the conformal class of a Riemannian manifold with boundary from the Dirichlet-to-Neumann map for the conformal Laplacian. The main result states that a locally conformally real-analytic manifold in dimensions $\geq 3$ can be determined in this way, giving a positive answer to an earlier conjecture \cite[Conjecture 6.3]{LassasUhlmann}. The proof proceeds as in the standard Calder\'on problem on a real-analytic Riemannian manifold, but new features appear due to the conformal structure. In particular, we introduce a new coordinate system that replaces harmonic coordinates when determining the conformal class in a neighborhood of the boundary.
\end{abstract}

\title{The Calder{\'o}n problem for the conformal Laplacian}

\author[M. Lassas]{Matti Lassas}
\address{Department of Mathematics and Statistics, University of Helsinki}
\email{matti.lassas@helsinki.fi}

\author[T. Liimatainen]{Tony Liimatainen}
\address{Department of Mathematics and Statistics, University of Helsinki}
\email{tony.liimatainen@helsinki.fi}

\author[M. Salo]{Mikko Salo}
\address{Department of Mathematics and Statistics, University of Jyv\"askyl\"a}
\email{mikko.j.salo@jyu.fi}


\date{\today}


\maketitle



\section{Introduction}\label{sec_introduction}

\subsection{Calder{\'o}n problem} The anisotropic Calder\'on problem consists in determining a conductivity matrix of a medium, up to a change of coordinates fixing the boundary, from electrical voltage and current measurements on the boundary. In dimensions $\geq 3$ this problem may be written geometrically as the determination of a Riemannian metric on a compact manifold with boundary from Dirichlet and Neumann data of harmonic functions. More precisely, if $(M,g)$ is a compact oriented Riemannian manifold with smooth boundary, we consider the Dirichlet problem for the Laplace-Beltrami operator $\Delta_g$, 
\[
\Delta_g u = 0 \text{ in $M$,} \qquad u|_{\partial M} = f
\]
and define the Dirichlet-to-Neumann map (DN map) 
\[
\Lambda_g: C^{\infty}(\partial M) \to C^{\infty}(\partial M), \ \ \Lambda_g f = \partial_{\nu} u|_{\partial M}
\]
where $\partial_{\nu}$ is the normal derivative on $\partial M$. In dimensions $n \geq 3$, one has the coordinate invariance 
\[
\Lambda_g = \Lambda_{\phi^* g}
\]
for any diffeomorphism $\phi: M \to M$ fixing the boundary. If $n=2$, the Laplace-Beltrami operator is additionally conformally invariant, and one has 
\[
\Lambda_g = \Lambda_{c \phi^* g}
\]
whenever $\phi: M \to M$ is a diffeomorphism fixing the boundary, and $c \in C^{\infty}(M)$ is a positive function with $c|_{\partial M} = 1$ and $\partial_{\nu} c|_{\partial M} = 0$.

The geometric Calder\'on problem amounts to showing that the DN map $\Lambda_g$ determines the manifold $(M,g)$ modulo the above invariances. This has been verified in \cite{LassasUhlmann} in the following cases. If $(M,g)$ is a compact connected $C^{\infty}$ Riemannian manifold with $C^{\infty}$ boundary, then:

\begin{itemize}
\item[(a)] 
If $n=2$, the DN map $\Lambda_g$ determines the conformal class of $(M,g)$.
\item[(b)] 
If $n \geq 3$ and if $M$, $\partial M$ and $g$ are real-analytic, then the DN map $\Lambda_g$ determines $(M,g)$.
\end{itemize}

Related results are given in \cite{LeeUhlmann, LTU}, and an analogous result for Einstein manifolds (which are real-analytic in the interior) is proved in \cite{GuSaBar}. It remains a major open problem to remove the real-analyticity condition when $n \geq 3$; for recent progress in the case of conformally transversally anisotropic manifolds see \cite{DKSaU, DKLS}.

\subsection{Conformal invariance} Given the conformal invariance of the Calder\'on problem when $n=2$, \cite{LassasUhlmann} formulated an analogous inverse problem that is conformally invariant in any dimension and involves the conformal Laplacian. This inverse problem reduces to the usual Calder\'on problem when $n=2$, and it was conjectured that uniqueness holds for locally conformally real-analytic manifolds in dimensions $\geq 3$ (\cite[Conjecture 6.3]{LassasUhlmann}, 
see also \cite{U1,U2}). In this work we give a positive answer to this conjecture.

Inverse problems for the conformal Laplacian are closely related to counterexamples for inverse problems and invisibility cloaking. Invisibility cloaking means the possibility, both theoretical and practical, of shielding a region or object from detection via physical waves, see~\cite{AE,Ammari,GKLU,GKLU5,GKLU6,GLU2,GLU3,KSVW, MN,Le,PSS1}.
 
The first counterexamples for elliptic inverse problems using blow-up maps were developed for the Laplace-Beltrami operator on two-dimensional manifolds in~\cite{LTU} and they were based on the conformal invariance of harmonic functions. In~\cite{LTU} it was shown that if one removes one point from a compact manifold and blows up the metric using a conformal transformation, one obtains a non-compact manifold whose DN map coincides with that of a compact manifold. Thus the DN map for the Laplace-Beltrami operator does not determine even the topology of a non-compact manifold.

The blow-up of the metric near a point corresponds to making a hole, or a cavity, in the manifold. In 2003, other types of blow-up maps were used in~\cite{GLU3} to construct examples where objects were hidden inside holes created by blow-up maps. This led to counterexamples for Calder\'on's problem and to invisibility cloaking constructions for the conductivity equation. The interest in cloaking surged in 2006 when it was realized that practical cloaking constructions are possible using so-called metamaterials. The construction of Leonhardt
\cite{Le} was based on conformal mapping on a nontrivial Riemann surface. At the same time, Pendry et al
\cite{PSS1} proposed a cloaking construction for Maxwell's equations using a blow-up map and the idea was demonstrated in laboratory experiments~\cite{SMJCPSS}. For reviews on the topic, see~\cite{GKLU-BAMS,GKLU-SIAM}.

The conformal Laplacian is also called the Yamabe operator due to its role in the (nowadays solved) Yamabe problem on compact closed manifolds. The solution of the Yamabe problem is a positive function solving a nonlinear eigenvalue equation for the conformal Laplacian, see e.g.~\cite{LeeParker} and references therein. The Yamabe problem also has a counterpart on manifolds with boundary, which is settled in most cases~\cite{Escobar, Marques, Brendle}. Some other related works on the conformal Laplacian are~\cite{ParkerSteven, HJ}.

Finally, the conformal Laplacian and its higher order generalizations have appeared quite recently in physics in connection with the so-called AdS/CFT (Anti-de Sitter/Conformal Field Theory) 
correspondence in quantum gravity. (See e.g.~\cite{Witten, Anderson} for AdS/CFT correspondence.) The basic ingredients in this theory are Poincar\'e-Einstein metrics and manifolds, which are special manifolds with boundary, equipped with conformally compactified metrics~\cite{Anderson, Anderson2}. The geometric nature of the theory has generated significant interest in its mathematical aspects~\cite{FG, FGbook}. 
In the seminal work~\cite{GrahamZworski}, the scattering matrix of Poincar\'e-Einstein type manifolds is related to the conformal Laplacian and other ``conformally invariant powers of the Laplacian''. DN type maps for these operators are studied in~\cite{Gover, BransonGover}.

\subsection{Inverse problem for the conformal Laplacian}

Let $(M,g)$ be a compact connected oriented Riemannian manifold with smooth boundary, with $n = \dim(M)$. Consider the conformal Laplacian 
\[
L_g = -\Delta_g + \frac{n-2}{4(n-1)}S_g
\]
where $\Delta_g = -\delta d$ is the Laplace-Beltrami operator (with negative spectrum) and $S_g$ is the scalar curvature. The conformal Laplacian has the conformal scaling property  
\[
L_{cg} u = c^{-\frac{n+2}{4}} L_g (c^{\frac{n-2}{4}} u)
\]
for any smooth positive function $c$.

Assume that $0$ is not a Dirichlet eigenvalue for $L_g$ (this is the case for instance when $S_g \geq 0$). Then for any $f \in C^{\infty}(\partial M)$ the Dirichlet problem 
\[
L_g u = 0 \text{ in } M, \quad u|_{\partial M} = f,
\]
has a unique solution $u \in C^{\infty}(M)$ and we may define the Dirichlet-to-Neumann map (DN map) related to $L_g$ by 
\[
N_g: C^{\infty}(\partial M) \to C^{\infty}(\partial M), \ \ N_g f = \partial_{\nu} u|_{\partial M}.
\]
It follows that if $c \in C^{\infty}(M)$ is a positive function satisfying $c|_{\partial M} = 1$ and $\partial_{\nu} c|_{\partial M} = 0$, then 
\[
N_{cg} = N_g.
\]
Consequently, one can only expect to determine $g$ up to such a conformal transformation from $N_g$.

It was conjectured in  \cite[Conjecture 6.3]{LassasUhlmann}, 
see also \cite{U1,U2},
 that if $(M,g)$ is a manifold of dimension $n \geq 3$ that is locally conformal to a real analytic manifold, and if $0$ is not a Dirichlet eigenvalue of $L_g$, then $N_g$ determines a manifold conformal to $(M,g)$. Generally speaking, a Riemannian manifold is locally conformal to a real analytic manifold if near each point there is a coordinate chart so that the corresponding coordinate representation of the metric is a real analytic matrix field up to a $C^\infty$ conformal factor. We will make this precise in Definition~\ref{locally_conformally_ra} below.

We give a positive answer to this conjecture in the following sense.

\begin{Theorem}\label{mainthm}
 Let $(M,g)$ be a compact locally conformally real analytic Riemannian manifold with boundary, $n=\dim(M)\geq 3$. Assume that $0$ is not a Dirichlet eigenvalue of the conformal Laplacian. Then the DN map defines the manifold $(M,g)$ up to a conformal scaling $c$ and a diffeomorphism $F$, which satisfy the following conditions:
\begin{eqnarray}\label{eq: gauge fix}
 c|_{\partial M}=1, \quad \partial_\nu c|_{\partial M}=0 \mbox { and } F|_{\partial M}=\mathrm{Id}.
\end{eqnarray}
\end{Theorem}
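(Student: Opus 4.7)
\bigskip
\noindent\emph{Proof proposal.} The plan is to adapt the Lassas--Uhlmann proof of part (b) in the Introduction to the conformally invariant setting, replacing harmonic coordinates by a new conformally invariant coordinate system built from ratios of solutions of $L_g u = 0$.

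\textbf{Step 1: Boundary determination.} Using the standard pseudodifferential factorization of elliptic boundary value problems applied to $N_g$, one first recovers the full jet at $\partial M$ of the conformal class of $g$. The principal symbol of $N_g$ gives the boundary metric $g|_{\partial M}$ up to a scalar factor; the gauge $c|_{\partial M}=1$, $\partial_\nu c|_{\partial M}=0$ fixes this factor and the first normal jet uniquely, and the higher normal derivatives of a conformal representative of $g$ (modulo the gauge, which contributes only in order $\geq 2$) are recursively read off from the symbol of $N_g$.

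\textbf{Step 2: A conformally invariant coordinate system.} The key new observation is that, by the scaling law $L_{cg}u = c^{-(n+2)/4}L_g(c^{(n-2)/4}u)$, a solution of $L_g u=0$ transforms as a conformal density of weight $-(n-2)/4$. Hence if $u^0,\dots,u^n$ solve $L_g u^j=0$ and $u^0>0$, the ratios
\[
y^i := u^i/u^0, \qquad i=1,\dots,n,
\]
depend only on the conformal class of $g$, not on the chosen representative. Choosing the Dirichlet data of $u^0,\dots,u^n$ appropriate to the boundary normal coordinates constructed in Step~1, one shows that $(y^1,\dots,y^n)$ is a local coordinate system in a collar neighborhood of any boundary point, and the full boundary jet of $y^i$ and of $g$ in these coordinates is determined by $N_g$.

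\textbf{Step 3: Real-analytic representative in $y$-coordinates.} Setting $\tilde g := (u^0)^{4/(n-2)} g$, the scaling law and $L_g u^0=0$ yield $L_{\tilde g} 1=0$, i.e.\ $\tilde g$ is scalar-flat, and solutions of $L_g v=0$ correspond to $\tilde g$-harmonic functions $v/u^0$. In particular each $y^i$ is $\tilde g$-harmonic, so $(y^1,\dots,y^n)$ are harmonic coordinates for the scalar-flat representative $\tilde g$. By the locally conformally real analytic hypothesis, some representative of the conformal class is real-analytic in suitable local coordinates; combining this with the elliptic regularity of $\tilde g$-harmonic coordinates in a real-analytic chart shows that the representative $\tilde g$ is real-analytic in the $y$-coordinates near the boundary.

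\textbf{Step 4: Analytic continuation and comparison.} Given two manifolds $(M_1,g_1)$, $(M_2,g_2)$ with a common boundary and $N_{g_1}=N_{g_2}$, Steps~1--3 produce identical boundary data and identical real-analytic representatives in the conformally invariant $y$-coordinates in collar neighborhoods of $\partial M_1=\partial M_2$. One then extends the identification through the interior by analytic continuation of the $y$-coordinate patches, combined with Runge-type density of solutions of $L_g u=0$ to propagate the boundary information. This produces a diffeomorphism $F:M_1\to M_2$ with $F|_{\partial M}=\mathrm{Id}$ such that $F^*g_2$ and $g_1$ are conformally related by a factor $c$ satisfying \eqref{eq: gauge fix}, exactly as in the Lassas--Uhlmann argument on real-analytic manifolds.

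The main obstacle I expect is in Steps~2--3: one must verify that enough solutions $u^0,\dots,u^n$ of $L_g u=0$ exist with prescribed boundary behavior making $(y^1,\dots,y^n)$ a bona fide coordinate system, and simultaneously reconcile two different requirements on these $y^i$ — namely that they be \emph{conformally invariant} (so that they depend only on what $N_g$ sees) and that they yield a \emph{real-analytic} metric representative when paired with the locally conformally real-analytic assumption. Interior extension past the first failure of $u^0>0$ or of the Jacobian of $y$ is another point where conformal singularities may force extra care compared to the standard harmonic-coordinate construction.
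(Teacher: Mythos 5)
Your overall strategy matches the paper's: recover the boundary jet in a gauge-fixed conformal representative, build a new coordinate system out of ratios of global solutions of $L_g u=0$ (the paper calls them $Z$-coordinates, your $y^i$), use local conformal real-analyticity plus analytic elliptic regularity to pin down a representative near the boundary, and then extend into the interior. The ratio construction $y^i = u^i/u^0$ and the observation that the $y^i$ are conformally invariant densities of weight $0$ is exactly the paper's $W^l = w^l/w^{n+1}$. But there are three concrete gaps in your proposal.

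First, in Step 1 the gauge $c|_{\partial M}=1$, $\partial_\nu c|_{\partial M}=0$ does \emph{not} uniquely fix the higher-order normal jet of the conformal factor, so the symbol of $N_g$ cannot be ``recursively read off'' without an additional normalization. The paper needs the extra condition $\mathcal{L}_{\tilde N}^j \tilde H|_{\partial M} = 0$ for all $j\geq 1$ (vanishing of all normal derivatives of the $\tilde g$-mean curvature), which is precisely what Lemma \ref{lemma_boundarydetermination}(b) arranges, and is the place where the original argument in \cite[Section 8]{DKSaU} had a gap. Second, in Step 3 your claim that scalar-flatness $S_{\tilde g}=0$ plus $\tilde g$-harmonic coordinates forces $\tilde g$ to be real analytic is not correct as stated: $S_{\tilde g}=0$ is a single scalar equation and is not an elliptic determined system for the metric (unlike, say, the Einstein equations). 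What the paper actually does is transform $L_{\psi^{-1*}(cg)} f^l = 0$ into $L_h[(cs)^{(n-2)/4} f^l] = 0$, where $h$ is the analytic factor of the locally conformally real-analytic metric, apply Morrey--Nirenberg analyticity up to the boundary, and then determinant-normalize the pulled-back metric so the $C^\infty$ conformal factors cancel; your scalar-flat route would need an equivalent analysis of the conformal weights, not just elliptic regularity of the equation $S_{\tilde g}=0$. Third, and related, for the boundary analyticity step to work the Dirichlet data of the $u^i$ must be real analytic in the $\phi$-chart where $g$ is conformally real analytic; the paper achieves this by choosing the boundary data to be $g_{\partial M}$-harmonic coordinates on $\partial M$ (Propositions \ref{Z_coord_construction}, \ref{transformation_to_Z}, \ref{bndr_comeg_trans}), and without some such choice the transition function from $\phi$-coordinates to your $y$-coordinates need not be analytic up to the boundary.

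Finally, your Step 4 (``analytic continuation of the $y$-coordinate patches'') is more in the spirit of the original Lassas--Uhlmann argument than of this paper, which instead follows the \cite{LTU} embedding scheme: one forms the scaled Green's functions $H_i(x,y) = G_i(x,y)/(P_i(x) P_i(y))$, shows the maps $\mathcal{H}_i:\widehat M_i \to H^s(\widehat U)$ are $C^1$ embeddings with the same image, and sets $J = \mathcal{H}_2^{-1}\circ \mathcal{H}_1$. You would need to spell out your patch-continuation argument in enough detail to handle issues of monodromy and the failure of the coordinate maps to stay immersive; the embedding argument is a clean alternative that sidesteps these, and the paper does substantial work (Propositions \ref{Gs_agree}, \ref{G_to_H}, Theorem \ref{extension_argument}) to adapt it to the conformal setting (including showing $P_i$ is positive in the interior and that the scaled Green's functions are jointly real analytic off the diagonal in conformally real-analytic charts).
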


We proceed to define locally conformally real analytic manifolds.

\subsection{Locally conformally real analytic manifolds}
A Riemannian manifold is said to be locally conformally real analytic if around each of its points there are local coordinates where the coordinate representation is real analytic up to a conformal scaling. If the point is a boundary point, we assume (in this paper) boundary conditions for the scaling. Precisely we define:

\begin{Definition}\label{locally_conformally_ra}
\begin{tehtratk}
\item[(a)]
A Riemannian manifold $(M,g)$ \emph{without boundary} is locally conformally real analytic if near every point $p\in M$ there are local coordinates $\phi:U\to \R^n$, $\phi(x)=(x^i(x))_{i=1}^n$, such that in these coordinates $g$ has the form 
\begin{equation}\label{cra}
g_{ij}(x)=s(x)\, h_{ij}(x),
\end{equation}
with $s\in C^\infty(\Omega)$ and $h_{ij}\in C^\omega(\Omega)$ real analytic, $\Omega = \phi(U) \subset \R^n$ open and connected.  

\item[(b)] 
$\!\!$ A Riemannian manifold $(M,g)$ \emph{with boundary} $\p M$ is locally conformally real analytic if interior points satisfy the condition in (a), and if for every boundary point $p\in \p M$ there is a boundary chart $\phi:U\to \R^n$, with coordinates $(x',x^n)$, $x'\in \R^{n-1}$, $x^n\geq 0$, such that
\[
g_{ij}(x) = s(x) h_{ij}(x)
\]
where $s\in C^\infty(\overline{\Omega})$ and $h_{ij}\in C^\omega(\overline{\Omega})$, $\Omega = \phi(U) \subset \{ x_n \geq 0 \}$, and additionally 
\[
s(x',0) \in C^\omega(\Gamma)
\]
where $\Gamma:=\Omega\cap\{x^n=0\}\subset \R^{n-1}$.
\end{tehtratk}
\end{Definition}

The notation $h_{ij}\in C^\omega(\overline{\Omega})$ means that $h_{ij}$ is real analytic up to the boundary and thus has a converging Taylor expansion with positive radius of convergence also at points of $\Gamma$. Also, by a boundary chart near a point $p\in \p M$ we mean a coordinate chart $\phi:U\to \mathbb{H}^n=\{(x',x_n)\in \R^n: x'\in \R^{n-1}, \ x^n\geq 0\}$ satisfying $\phi(U\cap \p M)\subset \{x^n=0\}$. Boundary normal coordinates provide an example of a boundary chart.

We remark that we do not assume in the definition of a locally conformally real analytic manifold that the manifold has a real analytic atlas. However, we prove in Appendix \ref{section_realanalytic_structure} (Proposition~\ref{Comeg_str}) that this is true at least if the manifold has no boundary.

Examples of locally conformally real analytic manifolds (without boundary) are those conformal to Einstein manifolds, but also Bach and obstruction flat manifolds. Bach flat manifolds are conformally invariant generalizations of Einstein manifolds in dimension $4$, while obstruction flat manifolds serve a similar purpose in even dimensions $n>4$, see e.g.~\cite{FG, FGbook}. That Bach and obstruction flat manifolds are locally conformally real analytic can be seen for example by using $n$-harmonic coordinates, so that the determinant normalized metric satisfies an elliptic PDE in these coordinates~\cite{LS2}.

\subsection{Outline of proof}

The proof of the main result may be divided in three parts.

\begin{tehtratk}

\item[1.] 
We will begin by determining the Taylor series of the metric \emph{on the boundary} following \cite{DKSaU}. To do this, one needs to find a suitable conformal scaling and determine the Taylor series in boundary normal coordinates for the scaled metric. One can think of this as a gauge fixing process for the conformal and diffeomorphism invariances of the DN map. 

The boundary normal coordinates, where we determine the jet of the (conformal) metric on the boundary, might not be real analytic. This causes a difficulty in the next step where we determine the metric near the boundary from its Taylor expansion.

\item[2.] 
Next we determine the metric \emph{near the boundary}. For this we introduce a new real analytic coordinate system associated with the conformal Laplacian. We call the new coordinate system \emph{$Z$-coordinates}. In these coordinates we can employ the assumption that the manifolds are locally conformally real analytic. This technique is in part motivated by the work~\cite{GuSaBar} on inverse problems on Einstein manifolds.

\item[3.] 
After determining the metric near the boundary, we follow the embedding argument of ~\cite{LTU}. There the authors give a new proof for the result of~\cite{LassasUhlmann} stating that on real analytic manifolds ($n\geq 3$) the DN map associated to the Laplace-Beltrami operator determines the manifold up to a boundary preserving diffeomorphism. We extend the methods of~\cite{LTU} to the conformally invariant setting of this paper.

\end{tehtratk}

Finally, we remark that just knowing a real analytic metric near the boundary does not determine the Riemannian manifold itself, as can be seen from the following example.
\medskip

\noindent
{\bf Example.} (Real analytic manifolds that are isometric near the boundary)
Let $N_1$ be the $n$-dimensional sphere $\mathbb S^n\subset \R^{n+1}$, $n\geq 3$, and let $p_0\in \mathbb S^n$
be the North Pole. Also, let $N_2$ be the $n$-dimensional projective space $\R\mathbb{P}^n$,
constructed by taking the closed upper half-sphere $\mathbb S^n\cap (\R^{n}\times [0,\infty))$ and identifying
the antipodal points on the boundary $\mathbb S^{n-1}\times \{0\}\subset  \R^{n+1}$. 

This well known construction defines a 2-to-1 map $H:N_1=\mathbb S^n\to N_2= \R\mathbb{P}^n$. We use on $N_1$ the metric $g_1$ that is inherited from 
$\R^{n+1}$ and on $N_2$ the metric $g_2=H_*g_1$. Also, let
$B_1=B_{N_1}(p_0,\frac 12)$ be the ball of $N_1$ having center at the North Pole $p_0$ and
radius $1/2$. Let $B_2=H(B_1)$. Finally, define $M_1=N_1\setminus B_1$ and  $M_2=N_2\setminus B_2$.
Then $M_1$ and $M_2$ are real-analytic manifolds with boundary such that the $\tau$-neighborhoods
of the boundaries with $\tau<\pi/2-1$ are isometric. In particular, 
the $C^\infty$  jets of the metric tensors in their boundary normal coordinates on these manifolds coincide. However, $M_1$ and $M_2$ are not isometric, nor even homeomorphic ($M_1$ is simply connected, but $M_2$ is not since its covering space $\mathbb S^n \setminus (B(p_0,1) \cup B(-p_0,1))$ has two boundary components whereas $M_2$ has only one).

We will see below that the symbol of the DN map (considered as a pseudo-differential operator) of the conformal Laplacian on $(M_j,g_j)$, $j=1,2$, determines the  $C^\infty$ jet of a metric conformal to $g_j$ in boundary normal coordinates of the conformal metric.

However, by Theorem \ref{mainthm} the DN map of the conformal Laplacian determines  the manifold $(M,g)$ up to a conformal transformation satisfying (\ref{eq: gauge fix}). 
Hence, the above elementary example of manifolds $M_1$ and $M_2$ shows that symbol (modulo smoothing symbols in class $S^{-\infty}$) of the DN map is not sufficient to determine uniquely the conformal type of the real-analytic manifold. Rather, some additional 
global properties of the DN map are needed.

\subsection*{Acknowledgements}

The authors are grateful to Plamen Stefanov, who pointed out a mistake in the boundary determination result in \cite[Section 8]{DKSaU} and provided a corrected proof. All authors were supported by the Academy of Finland (Centre of Excellence in Inverse Problems Research). T.L.\ and M.S.\ were also partly supported by an ERC Starting Grant (grant agreement no 307023).

\section{Boundary determination}
The first step is the following boundary determination result. It shows that the DN map $N_g$ of the conformal Laplacian determines the Taylor series of some conformal metric, in boundary normal coordinates of the new metric. 

\begin{Lemma} \label{lemma_boundarydetermination}
Let $(M,g)$ be a compact oriented Riemannian manifold with smooth boundary, with $n = \dim(M) \geq 3$.
\begin{enumerate}
\item[(a)]
Let $U \subset M$ be open and assume that $\Gamma = U \cap \partial M$ is nonempty. If $c \in C^{\infty}(U)$ is a positive function satisfying 
\begin{equation} \label{c_conditions_gamma}
c|_{\Gamma} = 1, \qquad \partial_{\nu} c|_{\Gamma} = 0, \qquad \mathcal{L}_{\tilde{N}}^j \tilde{H} |_{\Gamma} = 0 \ \ (j \geq 1),
\end{equation}
where $\tilde{g} = cg$ (defined in $U$), then the knowledge of $N_g f|_{\Gamma}$ for any $f \in C^{\infty}_c(\Gamma)$ determines 
\[
\mathcal{L}_{\tilde{N}}^j \tilde{g}|_{\Gamma}, \qquad j \geq 0.
\]
\item[(b)]
There exists a positive function $c \in C^{\infty}(M)$ satisfying \eqref{c_conditions_gamma} on $\partial M$.
\end{enumerate}
Here $\mathcal{L}_{\tilde{N}}$ is the Lie derivative of the $2$-tensor $\tilde{g}$, $\tilde{N}$ is the extension of the $\tilde{g}$-inner unit normal vector of $\partial M$ which is parallel along $\tilde{g}$-normal geodesics, and $\tilde{H} = \Delta_{\tilde{g}} (d_{\tilde{g}}(\,\cdot\,,\partial M))$ is the mean curvature of the hypersurfaces with fixed $\tilde{g}$-distance from $\partial M$.
\end{Lemma}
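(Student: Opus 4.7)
My plan combines the conformal covariance of $L_g$ with the classical pseudodifferential boundary determination method, using the gauge conditions on $c$ to decouple the conformal factor ambiguity from the normal jets of the metric.

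I would first handle part (b) by constructing $c$ as a Borel sum of a formal Taylor expansion in a boundary defining function on $U$. The first two conditions fix $c|_\Gamma = 1$ and $\partial_\nu c|_\Gamma = 0$. Working in $\tilde{g}$-boundary normal coordinates $(y',y^n)$, where $\tilde{g} = (dy^n)^2 + \tilde{h}_{\alpha\beta}\,dy^\alpha dy^\beta$, one has $\tilde{H} = \tfrac{1}{2}\partial_n \log\det \tilde{h}$, so the condition $\mathcal{L}_{\tilde{N}}^j \tilde{H}|_\Gamma = 0$ reads $\partial_n^{j+1}\log\det \tilde{h}|_\Gamma = 0$ for $j\geq 1$. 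Expanding $\log\det \tilde{h}$ through the relation $\tilde{g} = cg$, and using that the $\tilde{g}$- and $g$-boundary normal coordinates agree to second order since $c - 1$ vanishes to second order on $\Gamma$, each such condition becomes a linear equation for $\partial_n^{j+1} c|_\Gamma$ with nonzero leading coefficient, solvable in terms of the previously chosen jets. Borel's lemma then produces a smooth $c \in C^\infty(U)$ realizing this jet; positivity near $\Gamma$ is automatic.

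For part (a), the conformal covariance $L_{cg}(c^{-(n-2)/4} v) = c^{-(n+2)/4} L_g v$ together with $c|_\Gamma = 1$ and $\partial_\nu c|_\Gamma = 0$ gives $N_{\tilde{g}} f|_\Gamma = N_g f|_\Gamma$ for $f \in C_c^\infty(\Gamma)$, so from $N_g$ one recovers $N_{\tilde{g}}$ acting on data compactly supported in $\Gamma$. In $\tilde{g}$-boundary normal coordinates, $L_{\tilde{g}}$ factors microlocally in the normal variable as a product of two first order pseudodifferential operators, realizing $N_{\tilde{g}}$ as a classical pseudodifferential operator of order $1$ on $\Gamma$ whose full symbol $q(y',\eta')\sim\sum_{k\geq 0} q_{1-k}(y',\eta')$ is computable as a polynomial expression in the Taylor jets of $\tilde{h}$ and $S_{\tilde{g}}$ at $\Gamma$. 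The principal symbol $q_1 = |\eta'|_{\tilde{h}(y',0)}$ recovers $\tilde{g}|_\Gamma$, and inductively, assuming $\partial_n^\ell \tilde{h}|_\Gamma$ is known for $\ell < k$, the subsymbol $q_{1-k}$ is supposed to deliver $\partial_n^k \tilde{h}|_\Gamma$.

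The main obstacle, and the reason the gauge conditions on $c$ are essential, is the coupling between the metric jet and the scalar curvature. Because $S_{\tilde{g}}$ depends on two normal derivatives of $\tilde{h}$, the zeroth order potential $\tfrac{n-2}{4(n-1)} S_{\tilde{g}}$ in $L_{\tilde{g}}$ contributes to $q_{1-k}$ terms involving the same unknown $\partial_n^k \tilde{h}|_\Gamma$ as the principal part, so the naive recursion is circular; this is precisely the gap addressed by Stefanov's correction. The gauge $\mathcal{L}_{\tilde{N}}^j \tilde{H}|_\Gamma = 0$ translates to $\partial_n^{k}\log\det \tilde{h}|_\Gamma = 0$ for $k\geq 2$, which pins down the determinantal (conformal) part of every higher normal jet of $\tilde{h}$. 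With the trace part thus fixed, the scalar curvature contribution to $q_{1-k}$ is expressible purely in terms of previously determined data, and the trace-free part of $\partial_n^k \tilde{h}|_\Gamma$ is read cleanly off the trace-free part of $q_{1-k}$, closing the induction and yielding $\mathcal{L}_{\tilde{N}}^k \tilde{g}|_\Gamma$ for all $k \geq 0$.
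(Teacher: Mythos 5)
Your proposal follows essentially the same route as the paper: you fix the gauge by Borel summation against the mean-curvature condition, and then read off the normal jets of $\tilde g$ from the full symbol of the DN map via the Lee--Uhlmann/DKSaU factorization. The one substantive point worth correcting is your attribution of Stefanov's contribution. You identify the ``gap addressed by Stefanov's correction'' as the apparent circularity in the symbol recursion coming from the scalar curvature in the potential $q_{\tilde g}=\tfrac{n-2}{4(n-1)}S_{\tilde g}$. That is not what the paper says: the acknowledged mistake in \cite[Section 8]{DKSaU} was in the construction of the conformal normalization itself, i.e.\ in showing that a conformal factor achieving the determinant condition exists --- this is precisely part (b) here, and it is what the paper's Lemmas~\ref{lemma_normalderivative_comparison} and~\ref{lemma_tilde_boundary} are built to repair. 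By contrast, the part (a) symbol recursion in DKSaU is simply cited (their Lemma~8.7 and Theorem~8.4) and needs no repair; the quantity $\tfrac14\partial_{\tilde x_n}\tilde k^{\alpha\beta}+q_{\tilde g}\tilde g^{\alpha\beta}$ together with the determinant gauge already closes the induction, along the trace/trace-free split you describe. Relatedly, your assertion for (b) that ``the $\tilde g$- and $g$-boundary normal coordinates agree to second order'' and that each gauge condition ``becomes a linear equation for $\partial_n^{j+1}c|_\Gamma$ with nonzero leading coefficient'' is exactly the nontrivial part the paper establishes explicitly, producing the coefficient $(n-1)$ in $\tilde{\partial}_n^m\tilde H|_{\partial M}=(n-1)\partial_n^{m+1}\mu+\cdots$; you assert it without computing, which is the one place your sketch genuinely leans on work the paper does carefully.
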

\begin{Remarknonum}
If $p \in \Gamma$ and if $(\tilde{x}',\tilde{x}_n)$ are $\tilde{g}$-boundary normal coordinates in some neighbourhood of $p$ in $M$, so that $\tilde{g} = \tilde{g}_{\alpha \beta} \,d\tilde{x}^{\alpha} \,d\tilde{x}^{\beta} + (d\tilde{x}^n)^2$ where $\alpha$ and $\beta$ are summed from $1$ to $n-1$, then 
\[
\mathcal{L}_{\tilde{N}}^j \tilde{g}|_{\Gamma} = \partial_{\tilde{x}_n}^j \tilde{g}_{\alpha \beta}(\tilde{x}',0) \,d\tilde{x}^{\alpha} \,d\tilde{x}^{\beta} \qquad (j \geq 1),
\]
so knowing $N_g$ on $\Gamma$ in fact determines $\partial_{\tilde{x}_n}^j \tilde{g}_{\alpha \beta}|_{\Gamma}$ for all $j \geq 0$.
\end{Remarknonum}

Combining (a) and (b) above, we see that the knowledge of $N_g f|_{\Gamma}$ for any $f \in C^{\infty}_c(\Gamma)$ determines the Taylor series of some conformal metric $\tilde{g}$ in $\tilde{g}$-boundary normal coordinates on $\Gamma$. Lemma~\ref{lemma_boundarydetermination} implies the following kind of result.

\begin{Lemma} \label{lemma_boundarydetermination_realanalytic}
Let $(M,g)$ be a compact oriented Riemannian manifold with smooth boundary, with $n = \dim(M) \geq 3$. Let $p \in \partial M$, and assume that for some neighborhood $U$ of $p$ in $M$ there is a positive function $c \in C^{\infty}(U)$ satisfying \eqref{c_conditions_gamma} for $\Gamma = U \cap \partial M$.
Then the knowledge of $N_g f|_{\Gamma}$ for any $f \in C^{\infty}_c(\Gamma)$ determines 
\[
\partial_{\tilde{x}_n}^j \tilde{g}_{\alpha \beta}|_{\Gamma} \qquad (j \geq 0)
\]
in $\tilde{g}$-boundary normal coordinates at $p$. 
\end{Lemma}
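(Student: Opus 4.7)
The plan is to deduce the lemma directly from Lemma~\ref{lemma_boundarydetermination}(a) together with the coordinate identity recorded in the Remark immediately following it. By hypothesis, we are given a positive $c \in C^{\infty}(U)$ satisfying the three conditions of \eqref{c_conditions_gamma} on $\Gamma = U \cap \partial M$. This is precisely the setting of part~(a); the role of part~(b) is to build such a $c$ globally, which is not needed here since one is supplied locally by assumption. Applying part~(a), the knowledge of $N_g f|_{\Gamma}$ for $f \in C^{\infty}_c(\Gamma)$ determines, for every $j \geq 0$, the covariant two-tensor $\mathcal{L}_{\tilde{N}}^j \tilde{g}|_{\Gamma}$ as a tensor field on $\Gamma$, where $\tilde{g} = cg$ on $U$.

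Next, since $\tilde{g}$ is a smooth Riemannian metric on the one-sided neighbourhood $U$ of $\Gamma$ and $\Gamma$ is a smooth hypersurface, the usual construction via the exponential map along $\tilde{g}$-normal geodesics produces $\tilde{g}$-boundary normal coordinates $(\tilde{x}', \tilde{x}_n)$ on a neighbourhood $V \subset U$ of $p$. In these coordinates one has
\[
\tilde{g} = \tilde{g}_{\alpha\beta}(\tilde{x})\,d\tilde{x}^{\alpha}\,d\tilde{x}^{\beta} + (d\tilde{x}^n)^2,
\]
with $\tilde{g}_{\alpha n} \equiv 0$ and $\tilde{g}_{nn} \equiv 1$, and the coordinate vector field $\partial/\partial\tilde{x}_n$ coincides on $V$ with the parallel extension $\tilde{N}$ of the inner unit normal, since both are unit tangent to the $\tilde{g}$-normal geodesics emanating from $\Gamma$. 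Consequently, as already stated in the Remark,
\[
\mathcal{L}_{\tilde{N}}^j \tilde{g}|_{\Gamma} = \partial_{\tilde{x}_n}^{j} \tilde{g}_{\alpha\beta}(\tilde{x}', 0) \,d\tilde{x}^{\alpha} \,d\tilde{x}^{\beta} \qquad (j \geq 1),
\]
while the $j=0$ case gives $\tilde{g}_{\alpha\beta}(\tilde{x}', 0)$ from the tangential part of $\tilde{g}|_{\Gamma}$.

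Since $\{\partial/\partial\tilde{x}^{\alpha}\}_{\alpha=1}^{n-1}$ restricts to a basis of $T\Gamma$, knowing each $\mathcal{L}_{\tilde{N}}^j \tilde{g}|_{\Gamma}$ as a tensor field on $\Gamma$ is equivalent to knowing its components $\partial_{\tilde{x}_n}^{j} \tilde{g}_{\alpha\beta}|_{\Gamma}$ in the chosen boundary chart, and combined with Step~1 this yields the claim. I do not anticipate a substantive obstacle: all of the analytic content sits in Lemma~\ref{lemma_boundarydetermination}(a), and the present argument is a pure coordinate translation, for which the only thing to verify is that the assumed local conformal factor $c$ on $U$ suffices to construct $\tilde{g}$-boundary normal coordinates near $p$, which it does because $\tilde{g} = cg$ is a smooth Riemannian metric on the one-sided neighbourhood $U$ of $\Gamma$ in $M$.
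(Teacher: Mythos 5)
Your proposal is correct and follows exactly the route the paper itself takes: apply Lemma~\ref{lemma_boundarydetermination}(a) with the assumed local conformal factor $c$, then translate the Lie-derivative jets $\mathcal{L}_{\tilde{N}}^j \tilde{g}|_{\Gamma}$ into the coordinate partials $\partial_{\tilde{x}_n}^j \tilde{g}_{\alpha\beta}|_{\Gamma}$ via the Remark following that lemma. The paper treats this as an immediate consequence and gives no separate proof, so your write-up simply makes the intermediate step explicit; you also correctly note that part (b) of Lemma~\ref{lemma_boundarydetermination} is not needed here because the hypothesis supplies $c$ locally.
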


Lemma \ref{lemma_boundarydetermination} (a) is essentially proved in \cite[Section 8]{DKSaU}. The first step there was a conformal normalization, replacing the metric $g$ by a conformal metric $\tilde{g}$ such that $\log \det(\tilde{g})$ has suitable Taylor series in $\tilde{g}$-boundary normal coordinates at the boundary. However, there is a mistake in this part of \cite[Section 8]{DKSaU}. The authors are grateful to Plamen Stefanov, who pointed out the mistake and provided a corrected proof for the conformal normalization statement (a related argument is given in \cite{StefanovYang}). Lemma \ref{lemma_boundarydetermination} (b) gives an invariant formulation of the conformal normalization condition in terms of the mean curvature $\tilde{H}$. The proof is partly based on the argument of Stefanov.

We will use the following notation. If $(M,g)$ is a compact oriented manifold with smooth boundary, let $r(x) = d_g(x,\partial M)$ so that $r$ is a smooth function near $\partial M$ and $\Gamma_t = \{Êx \in M \,;\, r(x) = t \}$ are smooth submanifolds for $t$ small with $\Gamma_0 = \partial M$. The vector field 
\[
N = \mathrm{grad}_g(r)
\]
is a unit normal vector field of $\Gamma_t$ for $t$ small. If $\nabla$ is the Levi-Civita connection of $(M,g)$, let $S$ be the $(1,1)$-tensor field near $\partial M$, 
\[
S(X) = \nabla_X N,
\]
so that $S|_{\Gamma_t}$ is the shape operator of $\Gamma_t$ and $S(N) = 0$. The $2$-tensor $h$ obtained from $S$ by lowering an index is given by  
\[
h = \mathrm{Hess}(r), \qquad h(X,Y) = \langle \nabla_X N, Y \rangle,
\]
and this corresponds to the scalar second fundamental form of $\Gamma_t$. Finally let $H$ be the mean curvature of the surfaces $\Gamma_t$, 
\[
H = \mathrm{Tr}_g(h) = \Delta_g r
\]
where $\Delta_g = -\delta_g d$ is the (negative) Laplace-Beltrami operator. (We omit the factor $\frac{1}{n-1}$ usually included in the definition of $H$.)

Let $(x',x_n)$ be any boundary normal coordinates, so that 
\[
g = g_{\alpha \beta} \,dx^{\alpha} dx^{\beta} + (dx^n)^2.
\]
We use the Einstein summation convention so that a repeated Greek index in upper and lower position is summed from $1$ to $n-1$, whereas Roman indices are summed from $1$ to $n$. Then one has 
\[
r = x_n, \qquad N = \partial_n,
\]
the scalar second fundamental form is given by 
\begin{align*}
h(\partial_{\alpha}, \partial_{\beta}) &= \langle \nabla_{\partial_{\alpha}} \partial_n, \partial_{\beta}Ê\rangle = \Gamma_{\alpha n}^l g_{l\beta} = \frac{1}{2} g^{lm}(\partial_{\alpha} g_{nm} + \partial_n g_{\alpha m} - \partial_m g_{\alpha n}) g_{l\beta} \\
 &= \frac{1}{2} \partial_n g_{\alpha \beta},
\end{align*}
and the mean curvature is given by 
\[
H = \frac{1}{2} g^{\alpha \beta} \partial_n g_{\alpha \beta}.
\]

If $(M,g)$ is given, we consider the conformal metric $\tilde{g} = e^{2\mu} g$ where $\mu \in C^{\infty}(M)$ is real valued. Denote by $\tilde{r}, \tilde{N}, \tilde{S}, \tilde{h}, \tilde{H}$ the above quantities in $(M,\tilde{g})$. We first compare the normal derivatives $\partial_n$ and $\tilde{\partial}_n$ with respect to $g$ and $\tilde{g}$, respectively.

\begin{Lemma} \label{lemma_normalderivative_comparison}
Let $(M,g)$ be a compact manifold with smooth boundary, let $\mu \in C^{\infty}(M)$ satisfy $\mu|_{\partial M} = \partial_{\nu} \mu|_{\partial M} = 0$, and let $\tilde{g} = e^{2\mu} g$. If $f$ is a smooth function in $M$, then 
\begin{align*}
\tilde{\partial}_n f|_{\partial M} &= \partial_n f|_{\partial M}, \\
\tilde{\partial}_n^2 f|_{\partial M} &= \partial_n^2 f|_{\partial M}, \\
\tilde{\partial}_n^3 f|_{\partial M} &= \partial_n^3 f - (\partial_n^2 \mu) \partial_n f|_{\partial M}
\end{align*}
and for any $m \geq 4$ 
\[
\tilde{\partial}_n^m f|_{\partial M} = \partial_n^m f - (\partial_n^{m-1} \mu) \partial_n f + \sum_{j=0}^{m-1} T^m_j(\partial_n^j f)|_{\partial M}
\]
where each $T^m_j$ is a tangential differential operator on $\partial M$ depending on $\mu$ only through $\mu|_{\partial M}, \partial_n \mu|_{\partial M}, \ldots \partial_n^{m-2} \mu|_{\partial M}$ and their tangential derivatives.
\end{Lemma}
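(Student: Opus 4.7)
I will work in $g$-boundary normal coordinates $(x',x^n)$ near a point of $\partial M$, so that $g = g_{\alpha\beta}\,dx^\alpha dx^\beta + (dx^n)^2$ and $\Gamma^i_{nn}\equiv 0$. The hypotheses $\mu|_{\partial M} = \partial_\nu\mu|_{\partial M} = 0$ imply $\tilde g|_{\partial M} = g|_{\partial M}$ and $\partial_i\mu|_{\partial M} = 0$ for every $i$. Let $y(x',t)$ denote the unit-speed $\tilde g$-normal geodesic from $(x',0)$; this defines $\tilde g$-boundary normal coordinates near $\partial M$, and with $F(t) = f(y(x',t))$ one has $\tilde\partial_n^m f|_{\partial M}(x') = F^{(m)}(0)$. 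The problem therefore reduces to analyzing $F^{(m)}(0)$ as a polynomial in the derivatives $y^{(k)}(0)$ and the partial derivatives of $f$ at $(x',0)$.

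The conformal change formula $\tilde\Gamma^k_{ij} = \Gamma^k_{ij} + \delta^k_i\partial_j\mu + \delta^k_j\partial_i\mu - g_{ij}g^{kl}\partial_l\mu$ combined with $\Gamma^i_{nn} \equiv 0$ yields $\tilde\Gamma^n_{nn} = \partial_n\mu$ and $\tilde\Gamma^\alpha_{nn} = -g^{\alpha\beta}\partial_\beta\mu$. The geodesic equation $\ddot y^i = -\tilde\Gamma^i_{jk}(y)\dot y^j\dot y^k$, differentiated $m$ times and evaluated at $t = 0$ (where $\dot y = \partial_n$ and $\ddot y(0) = 0$ since $\partial_i\mu|_{\partial M} = 0$), produces
\[
y^{(m+2)i}(0) = -\partial_n^m\tilde\Gamma^i_{nn}|_{\partial M} + R_m^i,
\]
where $R_m^i$ depends polynomially on $y^{(k)}(0)$ for $k \leq m+1$ and on iterated derivatives of $\tilde\Gamma^i_{jk}$ at the boundary. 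By induction on $m$ I then establish the key structural claim
\[
y^{(m)n}(0) = -\partial_n^{m-1}\mu|_{\partial M} + A^m,\qquad y^{(m)\alpha}(0) = B^{m,\alpha},
\]
where $A^m$ and $B^{m,\alpha}$ involve only $\partial_n^k\mu|_{\partial M}$ for $0 \leq k \leq m-2$ and their tangential derivatives. This relies on two observations: $\partial_n^m\tilde\Gamma^n_{nn} = \partial_n^{m+1}\mu$ is the exact leading contribution to $y^{(m+2)n}(0)$, whereas $\partial_n^m\tilde\Gamma^\alpha_{nn} = -\partial_n^m(g^{\alpha\beta}\partial_\beta\mu)$ contains only tangential derivatives of $\partial_n^{\leq m}\mu$, so no top-order pure normal term of $\mu$ appears in $y^{(m+2)\alpha}(0)$.

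By the Fa\`a di Bruno formula,
\[
F^{(m)}(0) = \sum_{\pi\in P(m)} c_\pi \Big(\prod_{B\in\pi} y^{(|B|)i_B}(0)\Big) \partial_{i_{B_1}\cdots i_{B_{|\pi|}}} f(p).
\]
The all-singletons partition contributes $\partial_n^m f$. The single-block partition contributes $y^{(m)i}(0)\partial_i f$, whose leading piece is $-(\partial_n^{m-1}\mu)\partial_n f$, while the remainder $A^m\partial_n f + B^{m,\alpha}\partial_\alpha f$ is absorbed into $T^m_1(\partial_n f) + T^m_0(f)$ since its coefficients involve only $\partial_n^{\leq m-2}\mu$ and tangential derivatives. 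Every other partition has $2 \leq |\pi| \leq m-1$ blocks; each non-singleton block $B$ contributes $y^{(|B|)}$ with $|B| \leq m-1$, hence involves only $\partial_n^{\leq m-2}\mu$, and each singleton contributes $\dot y^i = \delta^i_n$, converting one derivative of $f$ into a $\partial_n$. Grouping terms by the total number $j$ of $\partial_n$ derivatives on $f$ (necessarily $0 \leq j \leq m-1$) packages them into tangential operators $T^m_j(\partial_n^j f)$ of the advertised form. The base cases $m = 1, 2, 3$ follow from $\dot y(0) = \partial_n$, $\ddot y(0) = 0$, and $y^{(3)n}(0) = -\partial_n^2\mu|_{\partial M}$, $y^{(3)\alpha}(0) = 0$, obtained directly from the identities for $\tilde\Gamma^i_{nn}$.

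The main bookkeeping obstacle is ensuring that the coefficient of $\partial_n^{m-1}\mu$ in the final expression is exactly $-1$, with no spurious contribution from the correction terms $R_m^i$ or from the mid-size Fa\`a di Bruno partitions. This is closed by the inductive claim above: any $y^{(k)}$ appearing with $k < m$ carries only $\partial_n^{\leq k-1}\mu \subseteq \partial_n^{\leq m-2}\mu$, so the $\partial_n^{m-1}\mu$ term arises only through the single chain $y^{(m)n}(0) \mapsto -(\partial_n^{m-1}\mu)\partial_n f$, confirming the stated formula.
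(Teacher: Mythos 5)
Your proposal is correct and follows essentially the same route as the paper's proof: both work in $g$-boundary normal coordinates, expand $\tilde{\partial}_n^m f = \partial_s^m(f(\eta(s)))|_{s=0}$ via Fa\`a di Bruno (the paper writes this concretely as its formula \eqref{f_normalderivative_first}), feed in the $\tilde g$-geodesic equation with the conformal change of Christoffel symbols, and argue inductively that the derivatives $\eta^{(k)}(0)$ carry the right order of normal $\mu$-derivatives so that $-\partial_n^{m-1}\mu\,\partial_n f$ is the unique top-order contribution. The only cosmetic difference is that you isolate the two identities $\tilde\Gamma^n_{nn}=\partial_n\mu$ and $\tilde\Gamma^\alpha_{nn}=-g^{\alpha\beta}\partial_\beta\mu$ to read off the coefficient $-1$ directly, whereas the paper writes out the conformal geodesic equation in full (its \eqref{partialeta_general}) and obtains the same $-1$ as a cancellation $-2\partial_n^m\mu + g^{nq}\partial_q\partial_n^{m-1}\mu = -\partial_n^m\mu$; the substance is identical.
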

\begin{proof}
Let $(x',x^n)$ be $g$-boundary normal coordinates near a boundary point $p$, and let $\eta(s)$ be any smooth curve through $p$. Write $\eta^j(s) = x^j(\eta(s))$. We claim that for any $m \geq 3$, one has 
\begin{multline}
\partial_s^m (f(\eta(s)) = \partial_{j_1 \cdots j_m} f(\eta) \dot{\eta}^{j_1} \cdots \dot{\eta}^{j_m} + \partial_j f(\eta) \partial_s^{m-1} \dot{\eta}^j \\
 + \sum_{l=2}^{m-1} \partial_{j_1 \cdots j_l}  f(\eta) \left[ \sum_{i_1 + \cdots +i_l = m-l} a^m_{i_1 \cdots i_l} (\partial_s^{i_1} \dot{\eta}^{j_1}) \cdots (\partial_s^{i_l} \dot{\eta}^{j_l}) \right] \label{f_normalderivative_first}
\end{multline}
where each $a^m_{i_1 \cdots i_l}$ is an absolute constant. In fact, one has 
\begin{align*}
\partial_s (f(\eta(s))) &= \partial_j f(\eta) \dot{\eta}^j, \\
\partial_s^2 (f(\eta(s))) &= \partial_{j k} f(\eta) \dot{\eta}^j \dot{\eta}^k + \partial_j f(\eta) \ddot{\eta}^j, \\
\partial_s^3 (f(\eta(s))) &= \partial_{j k l} f(\eta) \dot{\eta}^j \dot{\eta}^k \dot{\eta}^l + 3 \partial_{j k} f(\eta) \ddot{\eta}^j \dot{\eta}^k + \partial_j f(\eta) \dddot{\eta}^j
\end{align*}
which proves the case $m=3$. The claim follows by induction.

Now choose $\eta(s)$ to be the normal $\tilde{g}$-unit speed $\tilde{g}$-geodesic through $p$. Then $\eta(0) = p$, $\dot{\eta}(0) = \tilde{\partial}_n|_p$, and representing $\eta$ in terms of the $g$-boundary normal coordinates $(x',x^n)$ yields 
\[
\ddot{\eta}^l(s) = -\tilde{\Gamma}_{jk}^l(\eta(s)) \dot{\eta}^j(s) \dot{\eta}^k(s)
\]
by the $\tilde{g}$-geodesic equation.
The $\tilde{g}$-Christoffel symbols $\tilde{\Gamma}_{jk}^l$ are related to $g$-Christoffel symbols $\Gamma_{jk}^l$ by 
\[
\tilde{\Gamma}_{jk}^l = \Gamma_{jk}^l + (\partial_j \mu) \delta_k^l + (\partial_k \mu) \delta_j^l - g^{lq} (\partial_q \mu) g_{jk}.
\]
This implies that 
\[
\partial_s \dot{\eta}^l(s) = -\Gamma_{jk}^l \dot{\eta}^j \dot{\eta}^k - 2 (\partial_s \mu) \dot{\eta}^l + e^{-2\mu} g^{lq} \partial_q \mu|_{\eta(s)}
\]
where $\partial_s \mu = \partial_s (\mu(\eta(s)))$. Observe that one has $\Gamma_{nn}^l = 0$ for any $l$ since $(x',x^n)$ are $g$-boundary normal coordinates. This gives 
\begin{equation} \label{partialeta_general}
\partial_s \dot{\eta}^l(s) = -\Gamma_{\alpha \beta}^l \dot{\eta}^{\alpha} \dot{\eta}^{\beta} - 2\Gamma_{\alpha n}^l \dot{\eta}^{\alpha} \dot{\eta}^{n} - 2 (\partial_s \mu) \dot{\eta}^l + e^{-2\mu} g^{lq} \partial_q \mu|_{\eta(s)}.
\end{equation}
By induction, one sees that for $m \geq 1$ 
\begin{equation} \label{partialeta_general2}
\partial_s^m \dot{\eta}^l(s) = -2 (\partial_s^m \mu) \dot{\eta}^l + e^{-2\mu} g^{lq} \partial_s^{m-1} (\partial_q \mu(\eta(s))) + f^{ml}|_{\eta(s)}
\end{equation}
where each $f^{ml}|_{\eta(s)}$ depends on $\partial_{j_1 \cdots j_r} \mu(\eta(s))$ and $\partial_s^r \dot{\eta}^j(s)$ for $r \leq m-1$, and on $\partial_{j_1 \cdots j_r} g_{jk}$ for $r \leq m$. Here we also used \eqref{f_normalderivative_first}.

Restricting to $\partial M$, the condition $\mu|_{\partial M} = 0$ implies that $\dot{\eta}(0) = \tilde{N}(p) = N(p) = \partial_n|_p$ and thus 
\[
\dot{\eta}^l|_{\partial M} = \delta^l_n.
\]
Now \eqref{partialeta_general} and the condition $\partial_n \mu|_{\partial M} = 0$ give 
\[
\ddot{\eta}^l|_{\partial M} = 0.
\]
Taking $\partial_s$ of \eqref{partialeta_general}, evaluating on $\partial M$ and using the previous conditions gives 
\begin{align*}
\dddot{\eta}^l|_{\partial M} &= -2(\partial_s^2 \mu) \dot{\eta}^l + g^{lq} \partial_{nq} \mu|_{\partial M} \\
 &= -(\partial_n^2 \mu) \delta^l_n.
\end{align*}
Using \eqref{partialeta_general2} and \eqref{f_normalderivative_first}, it follows by induction that 
\begin{equation} \label{partialsm_eta_boundary}
\partial_s^m \dot{\eta}^l|_{\partial M} =  -2(\partial_n^m \mu) \delta^l_n + g^{lq} \partial_{q} \partial_n^{m-1} \mu + f^{ml}|_{\partial M}
\end{equation}
for some new functions $f^{ml}$ depending on $\partial_{j_1 \cdots j_r} \mu|_{\partial M}$ for $r \leq m-1$ and on $\partial_{j_1 \cdots j_r} g_{jk}|_{\partial M}$ for $r \leq m$. Since $\tilde{\partial}_n f(\eta(s)) = \partial_s (f(\eta(s)))$, one has $\tilde{\partial}_n^m f(\eta(s)) = \partial_s^m (f(\eta(s)))$. Thus inserting \eqref{partialsm_eta_boundary} in \eqref{f_normalderivative_first} gives that 
\begin{multline*}
\tilde{\partial}_n^m f|_{\partial M} = \partial_n^m f - (\partial_n f) \partial_n^{m-1} \mu + g^{\alpha \beta} (\partial_{\alpha} f) \partial_{\beta} \partial_n^{m-2} \mu + \sum_{j=0}^{m-1} T^m_j(\partial_n^j f)|_{\partial M}
\end{multline*}
where each $T^m_j$ is a tangential differential operator on $\partial M$ with coefficients depending on $\mu$ and its derivatives up to order $m-2$, and on $g_{pq}$ and their derivatives.
\end{proof}

\begin{Lemma} \label{lemma_tilde_boundary}
Let $(M,g)$ be a compact manifold with smooth boundary, let $\mu \in C^{\infty}(M)$ satisfy $\mu|_{\partial M} = \partial_{\nu} \mu|_{\partial M} = 0$, and let $\tilde{g} = e^{2\mu} g$. If $\tilde{r} = d_{\tilde{g}}(\,\cdot\,,\partial M)$ and if $(x',x_n)$ are $g$-boundary normal coordinates, then 
\begin{align*}
\tilde{r}|_{\partial M} &= 0, \\
\partial_n \tilde{r}|_{\partial M} &= 1, \\
\partial_n^2 \tilde{r}|_{\partial M} &=0, \\
\partial_n^3 \tilde{r}|_{\partial M} &=\partial_n^2 \mu|_{\partial M}, \\
\partial_n^m \tilde{r}|_{\partial M} &=  \partial_n^{m-1} \mu|_{\partial M} + f^m \quad (m \geq 4)
\end{align*}
where $f^m$ depends on $\mu|_{\partial M}$, $\partial_n \mu|_{\partial M}$, $\ldots, \partial_n^{m-2} \mu|_{\partial M}$ and their tangential derivatives.
\end{Lemma}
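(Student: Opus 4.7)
The plan is to exploit the eikonal equation satisfied by the distance function $\tilde{r}$. In a neighbourhood of $\partial M$, $\tilde{r}$ is smooth and solves $|\tilde{\nabla}\tilde{r}|_{\tilde{g}}^2 = 1$ with $\tilde{r}|_{\partial M} = 0$. In $g$-boundary normal coordinates, where $\tilde{g}^{ij} = e^{-2\mu} g^{ij}$, $g^{nn}=1$ and $g^{\alpha n}=0$, this eikonal equation reduces to
\begin{equation*}
\Phi := (\partial_n \tilde{r})^2 + g^{\alpha\beta}\, \partial_\alpha \tilde{r}\, \partial_\beta \tilde{r} = e^{2\mu}.
\end{equation*}
The first identities follow by inspection: from $\tilde{r}|_{\partial M} = 0$ I get $\partial_\alpha \tilde{r}|_{\partial M} = 0$ for all $\alpha$, and then $\Phi|_{\partial M} = e^{2\mu}|_{\partial M} = 1$ forces $\partial_n \tilde{r}|_{\partial M} = 1$ (positive sign since $\tilde{\nabla}\tilde{r}$ is the inner unit normal), which in turn yields $\partial_\alpha\partial_n\tilde{r}|_{\partial M} = 0$. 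Differentiating $\Phi = e^{2\mu}$ once in $\partial_n$, evaluating at $\partial M$, and using $\partial_n \mu|_{\partial M} = 0$ together with the above vanishings, all terms collapse except $2 \partial_n\tilde{r}\,\partial_n^2\tilde{r}$, giving $\partial_n^2\tilde{r}|_{\partial M} = 0$.

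For $m \geq 3$, I would proceed by induction, applying $\partial_n^{m-1}$ to the identity $\Phi = e^{2\mu}$ and solving for $\partial_n^m \tilde{r}|_{\partial M}$. The critical structural observation is that the top-order derivative appears on the left only through a single term: by Leibniz,
\begin{equation*}
\partial_n^{m-1}\bigl((\partial_n \tilde{r})^2\bigr) = \sum_{k=0}^{m-1}\binom{m-1}{k} \partial_n^{k+1}\tilde{r} \cdot \partial_n^{m-k}\tilde{r},
\end{equation*}
and the $k=0$ and $k=m-1$ summands contribute $2\partial_n\tilde{r}\,\partial_n^m\tilde{r}$, which on $\partial M$ equals $2\partial_n^m\tilde{r}$; every other summand pairs two factors $\partial_n^j\tilde{r}$ with $2\leq j\leq m-1$, known by induction. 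The terms arising from $\partial_n^{m-1}(g^{\alpha\beta}\partial_\alpha \tilde{r}\,\partial_\beta \tilde{r})$ have the form $(\partial_n^a g^{\alpha\beta})(\partial_\alpha \partial_n^b \tilde{r})(\partial_\beta \partial_n^c \tilde{r})$ with $a+b+c=m-1$; since both $\partial_\alpha\tilde{r}|_{\partial M}$ and $\partial_\alpha\partial_n\tilde{r}|_{\partial M}$ vanish, only $b,c\geq 2$ survive, so $b,c \leq m-3$, and such contributions are tangential derivatives of inductively known quantities depending on $\partial_n^j\mu|_{\partial M}$ for $j\leq m-4$.

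On the right-hand side, Fa\`a di Bruno applied with $\mu|_{\partial M} = \partial_n\mu|_{\partial M}=0$ gives
\begin{equation*}
\partial_n^{m-1}e^{2\mu}|_{\partial M} = 2\partial_n^{m-1}\mu|_{\partial M} + P_m,
\end{equation*}
where only set partitions with every block of size at least $2$ contribute, so $P_m$ is a polynomial in $\partial_n^j\mu|_{\partial M}$ for $2\leq j\leq m-3$. Equating the two sides and solving for the isolated term $2\partial_n^m\tilde{r}|_{\partial M}$ yields the desired formula $\partial_n^m\tilde{r}|_{\partial M} = \partial_n^{m-1}\mu|_{\partial M} + f^m$, with $f^m$ depending only on $\mu|_{\partial M},\partial_n\mu|_{\partial M},\ldots,\partial_n^{m-2}\mu|_{\partial M}$ and their tangential derivatives, closing the induction. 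There is no serious obstacle beyond careful combinatorial bookkeeping; the essential point is that the two observations above guarantee the top normal derivative of $\tilde{r}$ isolates correctly, so each step of the induction is a single linear equation solved for one unknown.
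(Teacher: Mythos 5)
Your argument is correct, and it takes a genuinely different route from the paper's. The paper first proves Lemma~\ref{lemma_normalderivative_comparison}, which is a general conversion rule relating the $\tilde g$-normal-geodesic derivative $\tilde\partial_n$ to the $g$-coordinate derivative $\partial_n$ on $\partial M$; it then applies that rule to the trivial observation that $\tilde\partial_n\tilde r\equiv 1$ and $\tilde\partial_n^m\tilde r\equiv 0$ for $m\geq 2$ in $\tilde g$-boundary normal coordinates. You instead bypass the conversion lemma entirely and work directly from the eikonal identity $(\partial_n\tilde r)^2 + g^{\alpha\beta}\partial_\alpha\tilde r\,\partial_\beta\tilde r = e^{2\mu}$ in $g$-boundary normal coordinates, applying $\partial_n^{m-1}$ and isolating the top normal derivative $\partial_n^m\tilde r|_{\partial M}$ at each inductive step. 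The two structural facts you identify --- that $2\partial_n\tilde r\,\partial_n^m\tilde r$ is the only term carrying the unknown, and that the tangential $g^{\alpha\beta}$ block contributes only through $\partial_\alpha\partial_n^b\tilde r|_{\partial M}$ with $b\leq m-3$ --- are exactly what makes the induction close, and your Fa\`a di Bruno analysis of $\partial_n^{m-1}e^{2\mu}|_{\partial M}$ correctly extracts the leading term $2\partial_n^{m-1}\mu$. Your route is arguably shorter and more self-contained for this specific lemma; the paper's route is more economical in context because Lemma~\ref{lemma_normalderivative_comparison} is reused in the proof of the main boundary-determination statement (Lemma~\ref{lemma_boundarydetermination}(b)), where one needs to compare $\tilde\partial_n^m\tilde H$ and $\partial_n^m\tilde H$ for the mean curvature, so building that infrastructure once pays off.
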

\begin{proof}
Clearly $\tilde{r}|_{\partial M} = 0$. Note that if $(\tilde{x}',\tilde{x}_n)$ are $\tilde{g}$-boundary normal coordinates, then $\tilde{r} = \tilde{x}_n$ and one has $\tilde{\partial}_n \tilde{r} = 1$ and $\tilde{\partial}_n^m \tilde{r} = 0$ for $m \geq 2$. We use Lemma \ref{lemma_normalderivative_comparison} to obtain 
\begin{align*}
\partial_n \tilde{r}|_{\partial M} &= \tilde{\partial}_n \tilde{r}|_{\partial M} = 1, \\
\partial_n^2 \tilde{r}|_{\partial M} &= \tilde{\partial}_n^2 \tilde{r}|_{\partial M} = 0, \\
\partial_n^3 \tilde{r}|_{\partial M} &= \tilde{\partial}_n^3 \tilde{r} + (\partial_n^2 \mu) \partial_n \tilde{r}|_{\partial M} = \partial_n^2 \mu|_{\partial M}.
\end{align*}
Finally, if $m \geq 4$ we obtain from Lemma \ref{lemma_normalderivative_comparison} that  
\begin{align*}
0 &= \tilde{\partial}_n^m \tilde{r}|_{\partial M} \\
 &= \partial_n^m \tilde{r} - (\partial_n^{m-1} \mu) \partial_n \tilde{r} + \sum_{j=0}^{m-1} T^m_j(\partial_n^j \tilde{r})|_{\partial M}
\end{align*}
where $T^m_j$ are tangential differential operators depending on $\mu|_{\partial M}$, $\partial_n \mu|_{\partial M}$, $\ldots, \partial_n^{m-2} \mu|_{\partial M}$ and their tangential derivatives. Inductively, we see that for $m \geq 3$ 
\[
\partial_n^m \tilde{r}|_{\partial M} = \partial_n^{m-1} \mu|_{\partial M} + f^m
\]
where $f^m$ depends on $\mu|_{\partial M}$, $\partial_n \mu|_{\partial M}$, $\ldots, \partial_n^{m-2} \mu|_{\partial M}$ and their tangential derivatives.
\end{proof}

\begin{proof}[Proof of Lemma \ref{lemma_boundarydetermination} (b)]
We look for $c$ in the form $c = e^{2\mu}$ for some $\mu \in C^{\infty}(M)$ with 
\begin{equation} \label{mu_conditions_initial}
\mu|_{\partial M} = \partial_{\nu} \mu|_{\partial M} = 0.
\end{equation}
If $\tilde{g} = cg$ and $\tilde{r} = d_{\tilde{g}}(\,\cdot\,,\partial M)$, we have 
\[
\tilde{H} = \Delta_{\tilde{g}} \tilde{r}.
\]
If $f$ is any smooth function, a computation shows that 
\[
\Delta_{\tilde{g}} f = e^{-2\mu} (\Delta_g f + (n-2) \langle d\mu, df \rangle_g).
\]
Let now $(x',x_n)$ be $g$-boundary normal coordinates, so that in these coordinates $g = g_{\alpha \beta} \,dx^{\alpha} \,dx^{\beta} + (dx^n)^2$. Then $\Delta_g = g^{jk} (\partial_{jk} - \Gamma_j \partial_k)$ where $\Gamma_j = g_{jl} \Gamma^l$ and $\Gamma^l = g^{jk} \Gamma_{jk}^l = -\Delta_g x^l$. In particular $\Gamma_n = -H$. It follows that 
\begin{align}
\tilde{H} &= e^{-2\mu} (\Delta_g \tilde{r} + (n-2) \langle d\mu, d\tilde{r} \rangle_g) \notag \\
 &= e^{-2\mu} \Big[ \partial_n^2 \tilde{r} + [ (n-2) \partial_n \mu + H ] \partial_n \tilde{r} \notag \\
 &\qquad \qquad \qquad + g^{\alpha \beta }(\partial_{\alpha \beta} \tilde{r} + [ (n-2) \partial_{\alpha} \mu - \Gamma_{\alpha} ] \partial_{\beta} \tilde{r}) \Big]. \label{tildeh_gbnc_expression}
\end{align}

Using Lemma \ref{lemma_tilde_boundary} and \eqref{mu_conditions_initial}, we have 
\[
\tilde{H}|_{\partial M} = H|_{\partial M}.
\]
For the first normal derivative, we use Lemmas \ref{lemma_normalderivative_comparison}--\ref{lemma_tilde_boundary} and \eqref{tildeh_gbnc_expression} and compute 
\begin{align*}
\tilde{\partial}_n \tilde{H}|_{\partial M} &= \partial_n \tilde{H}|_{\partial M} \\
 &= \partial_n^3 \tilde{r} + (n-2)\partial_n^2 \mu + \partial_n H|_{\partial M} \\
 &= (n-1) \partial_n^2 \mu + \partial_n H|_{\partial M}.
\end{align*}
Thus choosing $\mu$ so that 
\[
\partial_n^2 \mu|_{\partial M} = -\frac{1}{n-1} \partial_n H|_{\partial M}
\]
will ensure that $\tilde{\partial}_n \tilde{H}|_{\partial M} = 0$. If $m \geq 2$, we first use Lemma \ref{lemma_normalderivative_comparison} and the fact that $\partial_n \tilde{H}|_{\partial M} = 0$ to obtain that 
\[
\tilde{\partial}_n^m \tilde{H}|_{\partial M} = \partial_n^m \tilde{H} + \sum_{j=0}^{m-1} T_j^m(\partial_n^j \tilde{H})|_{\partial M}
\]
where $T_j^m$ are tangential differential operators depending on tangential derivatives of $\partial_n^r \mu|_{\partial M}$ for $r \leq m-2$. We then differentiate \eqref{tildeh_gbnc_expression}, which gives 
\[
\partial_n^m \tilde{H}|_{\partial M} = \partial_n^{m+2}Ê\tilde{r} + (n-2) \partial_n^{m+1} \mu + \partial_n^m H + f^m|_{\partial M}
\]
where $f^m$ depends on tangential derivatives of $\partial_n^r \mu|_{\partial M}$ for $r \leq m$, tangential derivatives of $\partial_n^r \tilde{r}|_{\partial M}$ for $r \leq m+1$, and on $g_{jk}$ and their derivatives. Combining these facts with Lemma \ref{lemma_tilde_boundary} yields that 
\[
\tilde{\partial}_n^m \tilde{H}|_{\partial M} = (n-1)\partial_n^{m+1} \mu + \partial_n^m H + f^m|_{\partial M}
\]
where $f^m$ depends on tangential derivatives of $\partial_n^r \mu|_{\partial M}$ for $r \leq m$ and on $g_{jk}$ and their derivatives. Thus we may choose $\partial_n^j \mu|_{\partial M}$ inductively for all $j \geq 2$ and apply Borel summation to get a smooth function $\mu$ near $p$ so that $\mu|_{\partial M} = \partial_{\nu} \mu|_{\partial M} = 0$ and $\tilde{\partial}_n^m \tilde{H}|_{\partial M} = 0$ near $p$ for all $m \geq 1$. Covering $\partial M$ with finitely many coordinate charts, doing this construction in each coordinate chart and applying a suitable partition of unity gives a function $\mu \in C^{\infty}(M)$ so that $c = e^{2\mu}$ has the required properties.
\end{proof}

\begin{proof}[Proof of Lemma \ref{lemma_boundarydetermination} (a)]
We use the results of \cite[Section 8]{DKSaU}. The main point is that in the notation of \cite{DKSaU} we have 
\[
N_g = \Lambda_{g,0,q_g}
\]
where $q_g = \frac{n-2}{4(n-1)} S_g$.

Let $c$ be as stated, let $p \in \Gamma$ and let $U_0 \subset \subset U$ be a neighborhood of $p$. We extend $c$ outside $U_0$ to obtain some positive function $c_1 \in C^{\infty}(M)$ with $c_1|_{\partial M} = 1$ and $c_1|_{\partial M} = 0$. Then the conformal metric 
\[
\tilde{g} = c_1 g
\]
satisfies 
\[
N_g = N_{\tilde{g}} = \Lambda_{\tilde{g},0,q_{\tilde{g}}}.
\]
By the assumptions, if $(\tilde{x}',\tilde{x}_n)$ are $\tilde{g}$-boundary normal coordinates near $p$, then $\tilde{g}$ also satisfies the normalization condition  
\[
\partial_{\tilde{x}_n}^j(\tilde{g}_{\alpha \beta} \partial_{\tilde{x}_n} \tilde{g}^{\alpha \beta})(x',0), \quad j \geq 1,
\]
near $p$.

Next, by computing the symbol of the pseudodifferential operator $\Lambda_{\tilde{g},0,q_{\tilde{g}}}$ near $p$, \cite[Lemma 8.7]{DKSaU} shows that knowledge of $N_g$ near $p$ determines on $\{ \tilde{x}_n = 0 \}$ the  quantities 
\[
\tilde{g}^{\alpha \beta}, \partial_{\tilde{x}_n} \tilde{g}^{\alpha \beta}, \tilde{\partial}^K \left( \frac{1}{4} \partial_{\tilde{x}_n} \tilde{k}^{\alpha \beta} + q_{\tilde{g}} \tilde{g}^{\alpha \beta} \right)
\]
for any multi-index $K$, where $\tilde{k}^{\alpha \beta} = \partial_{\tilde{x}_n} \tilde{g}^{\alpha \beta} - (\tilde{g}_{\gamma \delta} \partial_{\tilde{x}_n} \tilde{g}^{\gamma \delta}) \tilde{g}^{\alpha \beta}$. Finally an induction argument in the proof of \cite[Theorem 8.4]{DKSaU} shows that these quantities determine 
\[
\partial_{\tilde{x}_n}^j \tilde{g}^{\alpha \beta}(x',0), \quad j \geq 2.
\]
This concludes the proof.
\end{proof}

\section{Determination near the boundary}

Next we combine the boundary determination result, Lemma \ref{lemma_boundarydetermination}, with local conformal real-analyticity in order to determine the conformal class of the metric near the boundary from the knowledge of the DN map on the full boundary. The main result in this section is as follows.

\begin{Proposition}\label{near_whole_bndr}
Let $(M_1, g_1)$ and $(M_2, g_2)$ be compact locally conformally real analytic manifolds with boundary $\p M = \p M_1 = \p M_2$. Assume that $N_{g_1} = N_{g_2}$ on $\p M$. Then there is a function $c \in C^{\infty}(M_1)$ satisfying $c|_{\p M} = 1$, $\partial_{\nu} c|_{\p M} = 0$, and a diffeomorphism $F$ from a neighborhood $U$ of $\p M$ in $M_1$ onto a neighborhood of $\p M$ in $M_2$, such that 
\[
g_1 = c F^* g_2 \text{ on $U$,} \qquad F|_{\p M} = \mathrm{Id}.
\]
\end{Proposition}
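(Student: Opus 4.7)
I will upgrade boundary-jet agreement to agreement on a neighborhood of $\p M$ in a real-analytic coordinate system intrinsic to the conformal Laplacian (the $Z$-coordinates flagged in the outline). The argument has three main steps, the second being the central new ingredient.

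\textit{Boundary normalization.} Lemma~\ref{lemma_boundarydetermination}(b) yields positive $c_j \in C^\infty(M_j)$ with $c_j|_{\p M}=1$, $\p_\nu c_j|_{\p M}=0$, and satisfying the mean-curvature normalization $\mathcal{L}_{\tilde N}^k \tilde H|_{\p M}=0$ for $\tilde g_j := c_j g_j$. Since $N_{\tilde g_j}=N_{g_j}$, we have $N_{\tilde g_1}=N_{\tilde g_2}$, and then Lemma~\ref{lemma_boundarydetermination}(a) gives that the full Taylor series of $\tilde g_1$ and $\tilde g_2$ on $\p M$ in their respective $\tilde g_j$-boundary normal coordinates coincide.

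\textit{$Z$-coordinates and analytic continuation.} Because $\tilde g_j$-boundary normal coordinates are only $C^\infty$, boundary-jet equality alone is insufficient for analytic continuation. I replace them by a real-analytic chart built from $L_{g_j}$: on $\p M$ pick real-analytic functions $\phi^1,\dots,\phi^n$ so that $(\phi^1,\dots,\phi^{n-1})$ form a real-analytic chart on $\p M$ (which exists because in the locally conformally real-analytic structure the induced boundary metric is real analytic) and $\phi^n$ is chosen to produce a nondegenerate transverse direction, then solve the Dirichlet problem $L_{g_j} u^i_j = 0$, $u^i_j|_{\p M}=\phi^i$. In a locally conformally real-analytic chart writing $g_j = s_j h_j$, the conformal covariance
\[
L_{s_j h_j} u \;=\; s_j^{-(n+2)/4}\, L_{h_j}\bigl(s_j^{(n-2)/4} u\bigr)
\]
implies that $Z^i_j := s_j^{(n-2)/4} u^i_j$ solves the real-analytic elliptic equation $L_{h_j} Z^i_j = 0$, hence $Z^i_j \in C^\omega$ by real-analytic elliptic regularity up to the boundary. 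For suitable $\phi^n$ the tuple $(Z^1_j,\dots,Z^n_j)$ provides real-analytic coordinates near every boundary point.

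\textit{Matching and construction of $F$.} The transition from the locally conformally real-analytic chart to the $Z$-chart is a real-analytic diffeomorphism, so in $Z$-coordinates $\tilde g_j$ factors as $s_j$ times a real-analytic tensor $\widehat h_j$. The boundary Taylor series of both $s_j$ and $\widehat h_j$ can be computed from the Taylor series of $\tilde g_j$ in $\tilde g_j$-boundary normal coordinates (known by Step~1) together with the Taylor series of the transition maps (determined by $\phi^i$ and the same boundary data). As $N_{g_1}=N_{g_2}$ and the $\phi^i$ are chosen identically on the shared $\p M$, the boundary jets of $\widehat h_1$ and $\widehat h_2$ coincide; by real-analytic continuation $\widehat h_1 = \widehat h_2$ on a common $Z$-neighborhood of $\p M$. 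Identifying $Z$-coordinates then yields a diffeomorphism $F$ from a neighborhood $U$ of $\p M$ in $M_1$ onto a neighborhood in $M_2$ with $F|_{\p M}=\mathrm{Id}$ and $F^* g_2$ conformal to $g_1$ on $U$. Setting $c = (c_2 \circ F)/c_1$ gives $g_1 = c F^* g_2$ on $U$ with $c|_{\p M}=1$ and $\p_\nu c|_{\p M}=0$ by the boundary conditions on $c_1,c_2$.

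\textit{Main obstacle.} The crux is the $Z$-coordinate construction and the accounting of the conformal gauge through it: producing a real-analytic chart in which $\tilde g_j$ cleanly factors into a smooth conformal factor and a real-analytic tensor, isolating the real-analytic part for analytic continuation, and verifying that its boundary Taylor expansion is DN-determined even though the intermediate $\tilde g_j$-boundary normal coordinates are only smooth.
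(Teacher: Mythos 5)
The overall plan (extract boundary jets from the DN map, switch to a real-analytic coordinate system built from solutions of the conformal Laplacian, continue analytically, unwind) matches the paper's strategy, but your $Z$-coordinate construction has a gap that blocks the essential matching step.

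You define $Z^i_j := s_j^{(n-2)/4} u^i_j$, where $s_j$ is the smooth, unknown conformal factor in the $j$-th lcra chart. This function is indeed $C^\omega$ in that chart, but it depends on $s_j$, which is different for $j=1,2$ and unknown; in particular $Z^i_1$ and $Z^i_2$ do not even agree on $\p M$ (one restricts to $s_1^{(n-2)/4}|_{\p M}\,\phi^i$, the other to $s_2^{(n-2)/4}|_{\p M}\,\phi^i$). So ``identifying $Z$-coordinates'' cannot produce $F$ with the tuple as you have defined it: there is no DN-visible way to compare $Z_1$ with $Z_2$. The paper's key move (Proposition~\ref{Z_coord_construction}) is to introduce an $(n+1)$-th solution $w^{n+1}$ with boundary data $\equiv 1$ and pass to the ratios $W^l = w^l/w^{n+1}$: the factor $s_j^{(n-2)/4}$ cancels in the ratio, so $W^l\circ\phi^{-1}$ is $C^\omega$ as a quotient of two real-analytic functions with nonvanishing denominator near $\p M$, while the $W^l_j$ themselves are built only from solutions of $L_{c_jg_j}$ with DN-matched boundary data and hence have matched Cauchy data, so that $Z_1 = Z_2 + \mathcal{O}(x_n^\infty)$ along $\p M$ (Lemma~\ref{flagreeinfty}, Proposition~\ref{Zpreservjet}). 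If you drop the $s_j^{(n-2)/4}$ factor from your $Z^i_j$, you lose real-analyticity; if you keep it, you lose comparability. The ratio is what threads this needle, and it is absent from your proposal.

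Two further points are glossed over. First, your claim that ``the boundary jets of $\widehat h_1$ and $\widehat h_2$ coincide'' presupposes a canonical factorization of $\tilde g_j$ into a smooth conformal factor times a $C^\omega$ tensor; the paper makes this precise with the determinant normalization $\widehat G_i = G_i/\det(G_i)^{1/n}$ (Lemma~\ref{determination_if_ab}), which both cancels the unknown $s$ exactly and is computable from the DN-determined jet of $G_i$, and a separate argument is needed to transport the known jets from $\tilde g_j$-boundary normal coordinates to $Z$-coordinates. Second, your construction yields local diffeomorphisms near individual boundary points; to obtain a single $F$ on a collar of $\p M$ one must check that these local maps agree on overlaps (Proposition~\ref{F_indep}) and then establish that the glued map is globally injective on a sufficiently thin collar, which the paper does with a dedicated $\eps$-argument at the close of the proof of Proposition~\ref{near_whole_bndr}.
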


The proof is in several steps. First, we have determined the formal Taylor expansion, or the jet, of $cg$ in Lemma~\ref{lemma_boundarydetermination_realanalytic} in $cg$-boundary normal coordinates, where $c$ was some smooth scaling of $g$ found in Lemma~\ref{lemma_boundarydetermination} satisfying
$$
c|_{\p M} = 1 \mbox{ and } \p_\nu c|_{\p M }=0.
$$
So far we have not used the assumption that the manifolds we are considering are locally conformally real analytic. 

We wish to determine the metric near the boundary by using the assumption of local real analyticity to show that the Taylor expansion of a conformal metric converges. Here we encounter a problem: even if the original metric $g$ is conformally real analytic in \emph{some} coordinates, it does not follow that it is conformally real analytic in $cg$-boundary normal coordinates. It follows that we cannot determine the metric near the boundary using its formal Taylor series in $cg$-boundary normal coordinates, at least in general.

To resolve this problem we use the following procedure. We first determine the Taylor expansion of $cg$ in $cg$-boundary normal coordinates. Then we construct a new set of coordinates by solving suitable Dirichlet problems for the conformal Laplacian of $cg$. The solutions of the Dirichlet problems will constitute a coordinate system which we, after a scaling, call \emph{$Z$-coordinates}.

We will see that the $Z$-coordinates induce a real analytic change of coordinates from the coordinate system where the metric is conformally real analytic to the new $Z$-coordinates. Since a real analytic transformation preserves conformal real analyticity,
$$
g_{ij}=sh_{ij}, \quad h_{ij}\in C^\omega,
$$
we conclude that after changing to $Z$-coordinates, the metric is still conformally real analytic. In particular, we conclude that if the metric is conformally real analytic in some boundary chart it is that in $Z$-coordinates. (This is analogous to the fact that changing to harmonic coordinates preserve $C^\omega$ regularity.)

We will see that if the jets of the metrics $\tilde{g}_i  = c_i g_i$ agree in $\tilde{g}_i$-boundary normal coordinates $\psi_i$, 
\[
J_x(\psi_1^{-1*}\tilde{g}_1)=J_x(\psi_2^{-1*}\tilde{g}_2),
\]
and if the DN maps agree, then we will also have
\begin{equation}
J_x((Z_1\circ\psi_1)^{-1*}\tilde{g}_1)=J_x((Z_2\circ \psi_2^{-1*})\tilde{g}_2).
\end{equation}
After these steps we have determined the Taylor expansion of a conformal metric in a coordinate system where it is conformally real analytic. 

Finally we will normalize the determinant of the conformal metric and argue that the resulting normalized metric is determined by its Taylor expansion. The whole process is illustrated in the Figure~\ref{gauges} below.

\begin{figure}[!htb]\label{gauges}
\centering
\includegraphics[scale=.5]{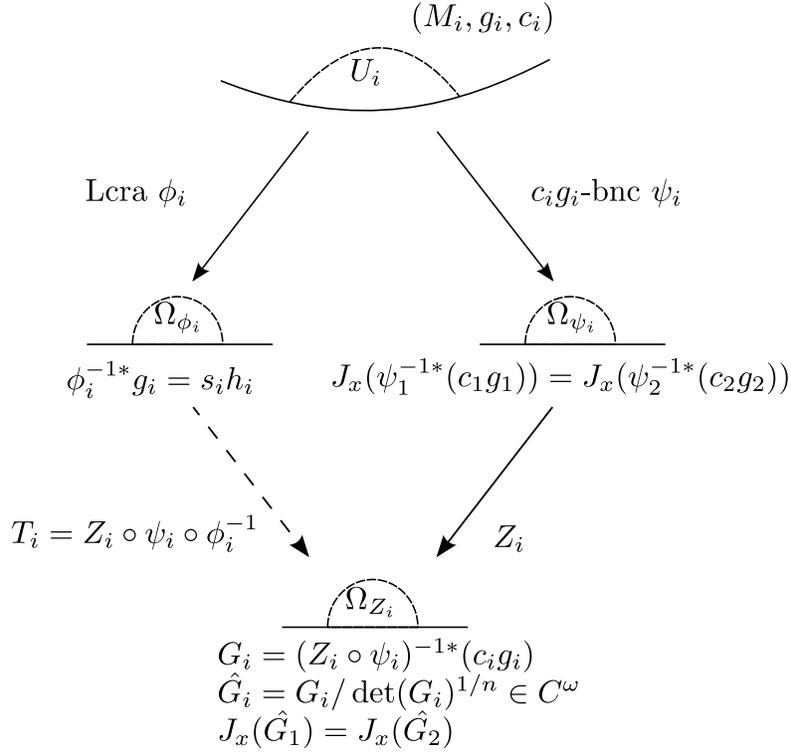}
\caption{The abbreviation "lcra" refers to a coordinate system where the metric is locally conformally real analytic and "bnc" refers to boundary normal coordinates.}
\label{fig:digraph}
\end{figure}

We remark that the procedure we described is analogous, though more involved, to the one used in the work~\cite{GuSaBar} in studying inverse problems on Einstein manifolds. There the authors use first boundary normal coordinates to determine the Taylor expansion of the metric and then use harmonic coordinates and the fact that in harmonic coordinates the metric of an Einstein manifold is real analytic.

We will now move to the details.

\subsection{Determining the jets.}
Let us make precise the conclusion of Lemma~\ref{lemma_boundarydetermination}. The lemma states that the formal Taylor expansion of $\tilde{g}=cg$ is determined on the boundary portion $\Gamma$ in $\tilde{g}$-boundary normal coordinates. This means the following. Assume that there are two Riemannian metrics $g_1$ and $g_2$ on manifolds $M_1$ and $M_2$ with common boundary portion $\Gamma\subset \p M_1, \Gamma\subset \p M_2$. Assume that the DN maps for $(M_1,g_1)$ and $(M_2,g_2)$ agree on $\Gamma \subset \p M$. Let $p\in\Gamma$.

Let then $c_1$ and $c_2$ be the smooth functions found in part (b) of Lemma~\ref{lemma_boundarydetermination} and denote $\tilde{g}_i=c_ig_i$, $i=1,2$. Let $x'=(x^{1},x^2,\ldots,x^{n-1})$ be some coordinates on $\Gamma$, and let $\psi_1$ and $\psi_2$ be the $\tilde{g}_1$- and $\tilde{g}_2$-boundary normal coordinates constructed by using the \emph{same} $x'$-coordinates. The latter implies that 
$$
\psi_1|_{\Gamma}=\psi_2|_{\Gamma}=(x',0).
$$

Let us denote by $J_p S$ the jet of a tensor field $S$ at $p$. A jet of tensor field is defined in given coordinates $(x^1,\ldots,x^n)$ as the coefficients of the Taylor expansion of the coordinate representation of the tensor field. 

With these in mind, the conclusion of Lemma~\ref{lemma_boundarydetermination_realanalytic} states that
\begin{equation}\label{Taylor_exps_agree}
J_x(\psi_1^{-1*}\tilde{g}_1) = J_x(\psi_2^{-1*}\tilde{g}_2).
\end{equation}
Here we have denoted $x=\psi_1(p)=\psi_2(p)$.

\subsection{Z-coordinates}

We next construct the new coordinate system that we call $Z$-coordinates. This is an $n$-tuple of functions constructed from (global) solutions of Dirichlet problems for the conformal Laplacian.

We will denote by $W^l$, $l=1,\ldots, n$, the corresponding $n$ functions that define a coordinate system on an open subset of $M$. Later, we will denote by $Z^l$ the coordinate representations of $W^l$. This is consistent with Figure~\ref{gauges}.

\begin{Proposition} \label{Z_coord_construction}
 Let $(M,g)$ be a Riemannian manifold with smooth boundary, with $n = \dim(M) \geq 3$. Let $p\in \p M$ and let $y' = (y^1,\ldots,y^{n-1})$ be coordinate chart on $\partial M$ near $p$. Then there exists a boundary coordinate system $W=(W^1,\ldots,W^n)$ on some open neighborhood $U\subset V$ of $p$ satisfying the following conditions:
 $$
 W^l=\frac{w^l}{w^{n+1}},\quad l=1,\ldots,n,
 $$
 where each $w^l\in C^\infty(M)$ for $l=1,\ldots,n+1$ solves 
\[
L_g w^l=0 \mbox{ in } M,
\]
and the restrictions of $w^l$ to $\Gamma = U \cap \p M$ are given by 
\begin{align*}
  w^l|_{\Gamma} &= y^l, \quad 1 \leq l \leq n-1, \\
  w^n|_{\Gamma} &= 0, \\
  w^{n+1}|_{\Gamma} &= 1.
\end{align*}
\end{Proposition}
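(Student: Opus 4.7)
The plan is to construct the $n+1$ global solutions $w^1,\ldots,w^{n+1}\in C^\infty(M)$ of $L_g w^l=0$ as solutions of suitable Dirichlet problems, and then verify that the differentials of $W^l=w^l/w^{n+1}$ are linearly independent at $p$, so the inverse function theorem produces the chart $U$. Fix a small neighborhood $\Gamma_0\subset\p M$ of $p$ inside the domain of $y'$. For $l=1,\ldots,n-1$, extend $y^l$ smoothly from $\Gamma_0$ to $\p M$ to get $\tilde y^l\in C^\infty(\p M)$, and take $w^l$ to be the solution of $L_g w^l=0$ with $w^l|_{\p M}=\tilde y^l$ (well posed since $0$ is not a Dirichlet eigenvalue of $L_g$). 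Take $w^{n+1}$ to be the solution with boundary data identically $1$, so $w^{n+1}(p)=1$ and $w^{n+1}$ is nonvanishing in a neighborhood of $\p M$. For $w^n$, take boundary data $\eta\in C^\infty(\p M)$ with $\eta|_{\Gamma_0}=0$, so that $w^n|_{\Gamma_0}=0$ and $\p_\nu w^n(p)=(N_g\eta)(p)$.

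The crucial step is to choose $\eta$ so that $(N_g\eta)(p)\neq 0$. I would prove this by contradiction. If $(N_g\eta)(p)=0$ for every $\eta$ supported in $\p M\setminus\Gamma_0$, then the Schwartz kernel $K_{N_g}(p,q)$, which is smooth off the diagonal, vanishes as a smooth function of $q\in\p M\setminus\Gamma_0$. Since $L_g$ is formally self-adjoint, $K_{N_g}$ is symmetric, and Green's identities express the kernel as $K_{N_g}(q,p)=\p_{\nu_y}P(y,p)|_{y=q}$, where $P(\cdot,p)$ denotes the Poisson kernel of $L_g$ with source $p$. Consequently $\p_\nu P(\cdot,p)$ vanishes on the open piece $\p M\setminus\Gamma_0$; combined with $P(\cdot,p)\equiv 0$ on $\p M\setminus\{p\}$, the function $P(\cdot,p)$ has vanishing Cauchy data on an open piece of the boundary. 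Classical unique continuation from the boundary for the second-order elliptic operator $L_g$ then forces $P(\cdot,p)\equiv 0$, contradicting the nontriviality of the Poisson kernel, which carries the boundary singularity at $p$. Hence a suitable $\eta$ exists.

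Finally, I would shrink if needed and define $W^l=w^l/w^{n+1}$ in a neighborhood of $p$, then compute its differential at $p$ using $w^{n+1}(p)=1$, $w^l|_{\Gamma_0}=y^l$ for $l\leq n-1$, $w^n(p)=0$, and $w^n|_{\Gamma_0}=0$. Tangentially this gives $dW^l|_{T_p\p M}=dy^l|_p$ for $l\leq n-1$ (independent covectors on $T_p\p M$) and $dW^n|_{T_p\p M}=0$, while the normal component of $dW^n|_p$ equals $\p_\nu w^n(p)\neq 0$ by the previous step. Hence $dW|_p$ has rank $n$, and the inverse function theorem produces a neighborhood $U\subset V$ of $p$ with $\Gamma:=U\cap\p M\subset\Gamma_0$ on which $W$ is a diffeomorphism and the stated boundary conditions hold. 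The main obstacle is the unique continuation argument establishing $\p_\nu w^n(p)\neq 0$; everything else reduces to Dirichlet problem solvability and a linear algebra check at $p$.
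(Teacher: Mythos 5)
Your construction of $w^1,\dots,w^{n-1}$ and $w^{n+1}$, and your rank computation of $dW|_p$, match the paper's proof essentially verbatim. The genuine difference is in the key step — producing $w^n$ with $w^n|_{\Gamma}=0$ but $\p_\nu w^n(p)\ne 0$. The paper invokes its Runge-type density result (Proposition~\ref{prop_runge_approximation_cauchy}, proved in Appendix~\ref{sec_density} by a duality argument that hinges on Lions--Magenes theory for Dirichlet data in negative-order Sobolev spaces plus boundary unique continuation). You instead argue pointwise: assume $(N_g\eta)(p)=0$ for all admissible $\eta$, conclude the Schwartz kernel $\mathcal{N}(p,\cdot)$ vanishes off $\overline{\Gamma}_0$, use self-adjointness to pass to $\mathcal{N}(\cdot,p)=\p_\nu P(\cdot,p)$, observe that the Poisson kernel $P(\cdot,p)$ then has vanishing Cauchy data on the open piece $\p M\setminus\overline{\Gamma}_0$, and kill it by unique continuation, contradicting the singularity of $P(\cdot,p)$ at $p$. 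Both routes are correct and both ultimately rest on boundary unique continuation for $L_g$, so neither is ``free'' — but yours is lighter, avoiding the $H^{-s}$ Dirichlet theory entirely, at the cost of invoking the off-diagonal smoothness and the blow-up of the Poisson kernel near its source (standard, but worth a citation). What the paper's Runge result buys is a quantitatively stronger and reusable statement (density of $\p_\nu u|_{\overline{\Gamma}}$ in $C^\infty(\overline{\Gamma})$), which is more than the one nonvanishing value you actually need here. One minor slip: you should say the kernel vanishes for $q\in\p M\setminus\overline{\Gamma}_0$, and you implicitly use $\overline{\Gamma}_0\neq\p M$ so that there is an open boundary piece on which to run unique continuation; this is harmless since $\Gamma_0$ can be taken small, but it should be stated.
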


\begin{proof}
 Let $p\in \p M$ and let $y' = (y^1,\ldots,y^{n-1})$ be a coordinate chart on $\p M$ near $p$. We first construct the functions $w^l\in C^\infty(M)$ for $l=1,\ldots,n-1$ as the solutions of the global Dirichlet problems
   \begin{align*}
  L_{g}w^l&=0 \quad \text{in $M$}, \\
   w^l &=\tilde{y}^l \quad \text{on $\partial M$},
   \end{align*}
where $\tilde{y}^l\in C^\infty(\p M)$ are some smooth continuations of the functions $y^l$, defined near $p$ on $\p M$, to functions on the whole of $\p M$.

We set $w^n$ to be the solution of
   \begin{align*}
  L_{g}w^n&=0 \quad \text{in $M$}, \\
  w^n &= \theta \quad \text{on $\partial M$},
\end{align*}
where the Dirichlet data $\theta \in C^{\infty}(\p M)$ is chosen so that 
\begin{align*}
\theta &=0 \quad \text{on $V\cap \p M$}, \\
\p_\nu w^n&\neq 0 \quad \text{on $V\cap \p M$}.
\end{align*}
Existence of such a $\theta$ is not trivial, but is guaranteed by a Runge type density argument given in Proposition~\ref{prop_runge_approximation_cauchy}.

We define the function $w^{n+1}$ to be the solution of
\begin{align*}
  L_{g}w^{n+1}&=0 \quad \text{in $M$}, \\
  w^{n+1} &= 1 \quad \text{on $\partial M$}. 
\end{align*}

We extend $y'$ to a boundary chart $y = (y', y^n)$ near $p$, and denote the coordinate representations of $w^l$ and $W^l$ by $f^l = w^l \circ y^{-1}$ and $Z^l = W^l \circ y^{-1}$. (This is consistent with Figure~\ref{gauges}.)
Let us notice that the Jacobian matrix of $Z = (Z^1, \ldots, Z^n)$ is given by 
$$
DZ|_{y(p)}=\left[\begin{array}{cc}
	I_{n-1} & 0 \\
	\p_T Z^n & \p_n Z^n\\
	\end{array}\right].
$$
Here $\p_T Z^n$ denotes the vector $(\p_1Z^n,\ldots\p_{n-1}Z^n)$ of tangential derivatives of $Z^n$. Note that the Jacobian determinant of $Z$ is non-zero at $y(p)$ if 
$$
\p_nZ^n|_{y(p)} \neq 0.
$$
But now at $y(p)$ we have
$$
\p_nZ^n=\frac{1}{f^{n+1}}\p_n f^n-\frac{f^n}{(f^{n+1})^2}\p_n f^{n+1}=\p_n f^n\neq 0,
$$
since $f^{n+1}(p)=1$ and $f^n(p)=0$. Thus there exists a neighborhood $U$ of $p$ in $M$ such that $W$ is boundary chart on $U$. This concludes the proof.
\end{proof}

The construction of the coordinate system $W$ in the proposition takes coordinates $y'$ on the boundary as input, and thus those can be chosen at will. In the next proposition, we will take the coordinates $y'$ to be $g_{\p M}$-harmonic coordinates on the boundary. Here $g_{\p M}$ denotes the induced metric on the boundary. For the existence of harmonic coordinates, see e.g.~\cite{DK, LS1}.

We will consider the functions $W^l$ introduced in the previous proposition in boundary normal coordinates $\psi$. The boundary normal coordinates $\psi$ will chosen to be the ones constructed using the same $g_{\p M}$-harmonic coordinate system on the boundary that we will use in constructing the $W$ coordinates. Thus we will have $\psi^l=w^l$, $l=1,\ldots,n-1$, on the boundary.

The coordinate representation $W^l\circ \psi^{-1}$ of $W^l$ will be denoted by $Z^l$. We call the $n$-tuple $Z = (Z^1,\ldots,Z^n)$ \emph{$Z$-coordinates}. These are functions on an open subset of the upper half plane $\mathbb{H}^n \subset \R^n$.  We also denote $f^l=w^l\circ\psi^{-1}$. 

The next proposition shows that if the Riemannian metric is locally conformally real analytic it is still that in $Z$-coordinates, at least if the coordinates on the boundary $y'$ in Proposition~\ref{Z_coord_construction} are chosen to be $g_{\p M}$-harmonic coordinates.  We formulate the proposition in the setting of the Figure~\ref{gauges}.
\begin{Proposition}\label{transformation_to_Z}
 Let $(M,g)$ be a locally conformally real analytic Riemannian manifold with boundary. Thus for given $p\in \p M$ there is a boundary coordinate chart $(U,\phi)$ on a neighborhood of $p$ where
 $$
 \phi^{-1*}g=s h, \mbox{ with } s\in C^{\infty}(\Omega_\phi) , h\in C^\omega(\Omega_\phi),
 $$
  $$
 s|_{\Omega_{\phi} \cap \{ x_n = 0 \} } \in C^{\omega}(\Omega_{\phi} \cap \{ x_n = 0 \}).
 $$
 Here $\Omega_\phi=\phi(U)\subset \mathbb{H}^n$.
 
 Let $c\in C^\infty(M)$ be a positive function on $M$ satisfying $c|_{\partial M} = 1$ and $\partial_{\nu} c|_{\partial M} = 0$. Let $\psi$ be boundary normal coordinates for $(M,cg)$ defined on a neighborhood $V\subset U$ of $p$ constructed with respect to $g_{\p M}$-harmonic coordinates on the boundary. 
 Define 
 \[
 Z^l = W^l \circ \psi^{-1}
 \]
where $W^l$ are constructed in Proposition \ref{Z_coord_construction} for $(M,cg)$ with the choice $y' = \psi'|_{\Gamma}$. Here $\psi=(\psi',\psi^n)$. Thus the coordinates $y'$ on the boundary are also $g_{\p M}$-harmonic.
 
 Then the transition function 
 $$
 T=Z\circ\psi\circ \phi^{-1}: \Omega_\phi\to \Omega_Z
 $$
 from $\phi$-coordinates to $Z$-coordinates is a real analytic diffeomorphism up to the boundary, at least after replacing $V\subset M$ by a smaller open set near $p$. We denote this set still by $V$. Above $\Omega_Z=Z(\psi(V))$. In particular, the metric $cg$ in $Z$-coordinates is conformally real analytic up to the boundary,
 $$
 Z^{-1*}\psi^{-1*}(cg)= \tilde{s} \tilde{h}, \quad \tilde{s}\in C^\infty(\Omega_Z), \tilde{h}\in C^\omega(\Omega_Z).
 $$
 Here $\tilde{s}=c|_{\psi^{-1}\circ Z^{-1}}s|_{T^{-1}}$ and $\tilde{h}=T^{-1*}h$ and $\Omega_Z$ is an open subset of $\mathbb{H}^n$ intersecting $\{x^n=0\}$.
\end{Proposition}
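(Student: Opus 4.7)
The plan is to reduce the equation $L_{cg}w^l=0$ satisfied by the functions built in Proposition~\ref{Z_coord_construction} to an elliptic equation with real-analytic coefficients, so that boundary analyticity theorems apply and produce real analyticity of the transition $T$ up to the boundary. The reduction is forced on us because $cg$ is only smooth in the $\phi$-chart: writing $cg=\rho\, h$ in $\phi$-coordinates with $\rho:=(c\circ\phi^{-1})\,s\in C^\infty(\Omega_\phi)$, only the conformal factor $\rho$ fails to be analytic, while $h\in C^\omega(\Omega_\phi)$. The conformal scaling identity for $L_{cg}$ then translates $L_{cg}w=0$ into
\[
L_h\!\bigl(\rho^{(n-2)/4}(w\circ\phi^{-1})\bigr)=0,
\]
so I would study the auxiliary functions $u^l:=\rho^{(n-2)/4}(w^l\circ\phi^{-1})$, which solve $L_h u^l=0$ with $L_h=-\Delta_h+\tfrac{n-2}{4(n-1)}S_h$ elliptic and real-analytic.

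The next step is to verify that the Dirichlet data of each $u^l$ is real analytic in a neighborhood of $\phi(p)$ on $\{x^n=0\}$. Since $c|_{\partial M}=1$, one has $\rho(x',0)=s(x',0)\in C^\omega$ by hypothesis. The induced boundary metric in $\phi|_{\partial M}$-coordinates is $s(x',0)\,h_{\alpha\beta}(x',0)$, which is real analytic, so any $g_{\partial M}$-harmonic coordinate $y^l$ is real analytic in $x'$ by the interior Morrey--Nirenberg theorem applied to $\Delta_{g_{\partial M}}y^l=0$. Consequently
\[
u^l|_{\{x^n=0\}}=s(x',0)^{(n-2)/4}\,(y^l\circ\phi^{-1}|_{\partial M})(x'),\qquad 1\le l\le n-1,
\]
is real analytic near $\phi(p)$, while $u^n|_{\{x^n=0\}}$ vanishes near $\phi(p)$ (because $\theta=0$ on $V\cap\partial M$), and $u^{n+1}|_{\{x^n=0\}}=s(x',0)^{(n-2)/4}$ is real analytic. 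At this point I would invoke the Morrey--Nirenberg theorem on real-analytic regularity up to the boundary for the linear elliptic Dirichlet problem with real-analytic coefficients, real-analytic boundary, and locally real-analytic boundary data, to conclude that each $u^l$ is real analytic on a one-sided neighborhood of $\phi(p)$ in $\overline{\Omega}_\phi$.

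Finally, since $w^{n+1}(p)=1$ forces $u^{n+1}(\phi(p))=s(\phi(p))^{(n-2)/4}>0$, the quotient
\[
T^l=W^l\circ\phi^{-1}=\frac{w^l\circ\phi^{-1}}{w^{n+1}\circ\phi^{-1}}=\frac{u^l}{u^{n+1}}
\]
is real analytic up to the boundary near $\phi(p)$. Proposition~\ref{Z_coord_construction} already guarantees that $T=Z\circ\psi\circ\phi^{-1}$ is a smooth local diffeomorphism at $\phi(p)$, so after shrinking $V$ it is a real-analytic diffeomorphism up to the boundary onto $\Omega_Z$. Pulling back $cg=\rho h$ by $T^{-1}$ and using $\phi^{-1}\circ T^{-1}=\psi^{-1}\circ Z^{-1}$ then yields
\[
Z^{-1*}\psi^{-1*}(cg)=(c|_{\psi^{-1}\circ Z^{-1}})\,(s|_{T^{-1}})\,(T^{-1*}h)=\tilde{s}\,\tilde{h},
\]
with $\tilde{s}\in C^\infty(\Omega_Z)$ and $\tilde{h}=T^{-1*}h\in C^\omega(\Omega_Z)$, as claimed. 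The main obstacle I anticipate is precisely the boundary analyticity step: since the ambient factor $\rho$ is only $C^\infty$, Morrey--Nirenberg cannot be applied to $L_{cg}$ directly. The whole argument hinges on the conformal reduction to $L_h$, together with the hypothesis $s|_{\{x^n=0\}}\in C^\omega$ and the choice of $g_{\partial M}$-harmonic boundary coordinates, which are exactly what make the Dirichlet data of the $u^l$ analytic.
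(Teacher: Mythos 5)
Your proposal is correct and takes essentially the same route as the paper: both reduce $L_{cg}w^l=0$ via the conformal scaling law to $L_h u^l=0$ with $u^l=(c|_{\phi^{-1}}s)^{(n-2)/4}(w^l\circ\phi^{-1})$, verify real analyticity of the Dirichlet data of the $u^l$ on $\{x^n=0\}$ (using $c|_{\partial M}=1$, the hypothesis on $s|_{\{x^n=0\}}$, and analyticity of the transition from $\phi'$ to the $g_{\partial M}$-harmonic boundary coordinates), invoke Morrey--Nirenberg boundary analyticity, and pass to the quotient $T^l=u^l/u^{n+1}$. The only cosmetic difference is that the paper isolates the analyticity of $\psi\circ\phi^{-1}$ on the boundary as a separate appendix lemma, whereas you argue it inline.
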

\begin{proof}
We show that there is a function $\gamma$ such that $\gamma w^l\circ \phi^{-1} \in C^\omega(\Omega_\phi)$, with $\gamma$ independent of $l$. Here $w^l$ are the functions used to construct the $Z$-coordinates in Proposition \ref{Z_coord_construction}. (In particular, the functions $w^l$ depend on the $\psi$-coordinates at the boundary.) 

The functions $f^l=w^l\circ\psi^{-1}$ satisfy
$$
L_{\psi^{-1*}(cg)}f^l=0 \mbox{ on } \Omega_\psi
$$
with some real analytic Dirichlet boundary conditions on an open subset of $\{x^n=0\}$. We remove the index $l$ from our notation to simplify our presentation. Applying $(\psi\circ \phi^{-1})^*$ to the above yields
\begin{align*}
0&=(\psi\circ \phi^{-1})^*L_{\psi^{-1*}(cg)}f=L_{(\phi^{-1*}\psi^*\psi^{-1*}(cg))}(w\circ \phi^{-1}) \\
&=L_{c|_{\phi^{-1}}\phi^{-1*}g}(w\circ \phi^{-1}) =L_{c|_{\phi^{-1}}s h}(w\circ \phi^{-1}) \\
&=\left(c|_{\phi^{-1}}s\right)^{-\frac{n+2}{4}}L_h\left[\left(c|_{\phi^{-1}}s\right)^{\frac{n-2}{4}}(w\circ \phi^{-1})\right].
\end{align*}
Thus
$$
L_h\left[\left(c|_{\phi^{-1}}s\right)^{\frac{n-2}{4}}(w\circ \phi^{-1})\right]=0.
$$

Note that $h$ is real analytic up to boundary by assumption and that $\left(c|_{\phi^{-1}}s\right)^{\frac{n-2}{4}}(w\circ \phi^{-1})$ has real analytic Dirichlet boundary values on $\Omega_\phi\cap\{x^n=0\}\subset \mathbb{H}^n$: First of all, we have
$$
w^n\circ \phi^{-1}(x) \text{ and } w^{n+1}\circ \phi^{-1}(x) \text{ are equal to } 0 \text{ or } 1
$$
on $\Omega_\phi\cap\{x^n=0\}$ respectively. Then, for $l=1,\ldots n-1$ we write
$$
w^l\circ \phi^{-1}=(w^l\circ \psi^{-1})\circ \underbrace{(\psi\circ \phi^{-1})}_{\in C^{\omega}(\R^{n-1})} \in C^{\omega}(\Omega_\phi\cap\{x^n=0\}).
$$
Some explanations are in order. The functions $\psi^l$ are $g_{\p M}$-harmonic on the boundary. By the coordinate invariance of $\Delta_{g_{\p M}}$, the functions $\psi^l\circ \phi^{-1}$ restricted to $\Omega_\phi\cap\{x^n=0\}$ are solutions to an elliptic equation with $C^{\omega}(\R^{n-1})$-coefficients. Thus $\psi\circ \phi^{-1}$ is real analytic on $\Omega_\phi\cap\{x^n=0\}$. We have included the details of this argument in Proposition~\ref{bndr_comeg_trans} in the appendix. We also have $w^l\circ \psi^{-1}(x)=x^l$, for $x\in \Omega_\phi\cap\{x^n=0\}$ by assumption.

Also,  $c|_{\phi^{-1}} = 1$ on the boundary and $s$ is real analytic on $\Omega_\phi\cap\{x^n=0\}$ by assumption. By these facts, the functions $\left(c|_{\phi^{-1}}s\right)^{\frac{n-2}{4}}(w^l\circ \phi^{-1})$, $l=1,\ldots,n+1$, indeed have real analytic Dirichlet boundary values on $\Omega_\phi\cap\{x^n=0\}$.

It follows from~\cite[Theorem A]{MoNi} that
$$
\left(c|_{\phi^{-1}}s\right)^{\frac{n-2}{4}}(w\circ \phi^{-1})
$$
is real analytic up to the boundary near $\phi(p)$. If necessary, we redefine $V$ as a smaller open subset near $p$ so that this function is real analytic up to the boundary on $\phi(V)$ for $l=1,\ldots,n+1$. Thus we may take $\gamma=(c|_{\phi^{-1}}s)^{\frac{n-2}{4}}$.

Now, we have that
$$
T^l=\frac{w^l}{w^{n+1}}\circ \phi^{-1}=\frac{w^l\circ\phi^{-1}}{w^{n+1}\circ\phi^{-1}}=\frac{(c|_{\phi^{-1}}s)^{\frac{n-2}{4}}  w^l\circ \phi^{-1}}{(c|_{\phi^{-1}}s)^{\frac{n-2}{4}}w^{n+1}\circ \phi^{-1}}
$$
is real analytic up to the boundary $\{x^n=0\}$. Thus $T=(T^1,\ldots,T^l)\in C^\omega(\Omega_\phi,\Omega_Z)$, where $\Omega_\phi=\phi(V)$ and $\Omega_Z=Z(\phi(V))$ are some open sets on $\mathbb{H}^n$ intersecting the set $\{x^n=0\}$.

That $cg$ is conformally real analytic (up to boundary) in $Z$-coordinates follows by noticing that
\begin{align*}
Z^{-1*}\psi^{-1*}(cg)&=c|_{\psi^{-1}\circ Z^{-1}}Z^{-1*}\psi^{-1*}\phi^*\phi^{-1*}(g) \\
&=c|_{\psi^{-1}\circ Z^{-1}}T^{-1*}(dh)=c|_{\psi^{-1}\circ Z^{-1}}s|_{T^{-1}}T^{-1*}h.
\end{align*}
Since $h\in C^\omega$ and $T$ is a $C^\omega$-diffeomorphism up to boundary, we have the claim.
\end{proof}

\subsection{Jets of $Z$-coordinates}

Next we show that if two sets of $Z$-coordinates are constructed for two Riemannian metrics $H_1$ and $H_2$ such that a) the metrics satisfy $H_1=H_2+\mathcal{O}(x_n^\infty)$ on $\{x^n=0\}$ in given boundary normal coordinates, and b) these two sets of $Z$-coordinates have the same Cauchy data on $\{x^n=0\}$, then the $Z$-coordinates agree up to infinite order in the variable $x^n$ on the set $\{x^n=0\}$.

We begin with an auxiliary lemma.

\begin{Lemma}\label{flagreeinfty}
 Let $H_1$ and $H_2$ be smooth positive definite symmetric matrix fields on an open subset $\Omega$ of the upper half plane $\mathbb{H}^n=\{(x',x_n)\in \R^n: x'\in \R^{n-1}, x^n\geq 0\}$ that intersects the boundary  $\{x_n=0\}$. Set $\Gamma=\Omega\cap \{x^n=0\}$. Assume that these matrix fields have the form
 $$
 H_i(x',x_n)=dx_n^2+h_i(x',x_n), \quad i=1,2.
 $$
 Assume that $H_1$ and $H_2$ satisfy
 $$
 H_1=H_2+\mathcal{O}(x_n^\infty) \mbox{ on } \Gamma.
 $$
 
 Let $f_1$ and $f_2$ be two functions on $\Omega$ such that
 $$
 L_{H_i}f_i=0 \quad \mbox{in } \{x^n>0\} \cap \Omega, \quad i=1,2
 $$
 with the same $C^\infty$ smooth Cauchy data on the boundary
 $$
 f_1=f_2 \mbox{ and } \p_{x_n}f_1=\p_{x_n}f_2 \mbox{ on } \Gamma.
 $$
 Then
 $$
 f_1=f_2 +\mathcal{O}(x_n^\infty).
 $$
\end{Lemma}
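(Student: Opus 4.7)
The plan is to show that $\partial_{x_n}^k f_1|_{\Gamma} = \partial_{x_n}^k f_2|_{\Gamma}$ for every $k \geq 0$ by induction on $k$. Since the jet of $f_1 - f_2$ in the variable $x_n$ at each point of $\Gamma$ then vanishes identically, this is precisely the statement $f_1 = f_2 + \mathcal{O}(x_n^\infty)$ on $\Gamma$.

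The cases $k=0,1$ are the hypothesis on the Cauchy data. For the inductive step I would exploit that, since $H_i$ is of boundary normal form $dx_n^2 + h_i(x',x_n)$, we have $H_i^{nn} = 1$ and $H_i^{\alpha n} = 0$, so in these coordinates the conformal Laplacian takes the form
\[
L_{H_i} u = -\partial_{x_n}^2 u - A_i(x) \partial_{x_n} u - B_i(x',x_n, \partial_{x'}, \partial_{x'}^2) u + q_{H_i}(x) u,
\]
where $A_i, B_i, q_{H_i}$ are smooth functions/tangential differential operators whose coefficients are polynomial expressions in $h_i$, $h_i^{-1}$ and their derivatives (with $q_{H_i} = \frac{n-2}{4(n-1)} S_{H_i}$), and $B_i$ is of order two but \emph{purely tangential}. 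Solving the equation $L_{H_i} f_i = 0$ for the highest normal derivative gives the recursion
\[
\partial_{x_n}^2 f_i = -A_i \,\partial_{x_n} f_i - B_i f_i + q_{H_i} f_i,
\]
and differentiating $k$ times in $x_n$ yields
\[
\partial_{x_n}^{k+2} f_i = \sum_{j=0}^{k+1} P_{i,j}^{(k)}(x',x_n, \partial_{x'}) \,\partial_{x_n}^{j} f_i,
\]
where the $P_{i,j}^{(k)}$ are tangential differential operators whose coefficients are universal polynomial expressions in the entries of $h_i$, $h_i^{-1}$, $q_{H_i}$, and their $x$-derivatives up to some finite order.

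Now comes the key point: the hypothesis $H_1 = H_2 + \mathcal{O}(x_n^\infty)$ on $\Gamma$ means that all $x$-derivatives of $h_1 - h_2$ (and hence of $h_1^{-1} - h_2^{-1}$, $S_{H_1} - S_{H_2}$, etc.) vanish on $\Gamma$. Consequently
\[
P_{1,j}^{(k)}|_\Gamma = P_{2,j}^{(k)}|_\Gamma \qquad \text{for every } k,j.
\]
Assuming inductively that $\partial_{x_n}^j f_1|_\Gamma = \partial_{x_n}^j f_2|_\Gamma$ for $j \leq k+1$, tangential differentiation preserves this equality on $\Gamma$, and the recursion then gives $\partial_{x_n}^{k+2} f_1|_\Gamma = \partial_{x_n}^{k+2} f_2|_\Gamma$. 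This closes the induction.

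There is no real obstacle here beyond bookkeeping: the only thing to check carefully is that the recursion for $\partial_{x_n}^{k+2} f_i$ involves only tangential derivatives of $f_i$ and strictly lower normal derivatives (so that the induction hypothesis applies), which is guaranteed by the boundary normal form $H_i^{nn}=1$, $H_i^{\alpha n}=0$, and that the coefficients depend on $H_i$ only through $h_i$, $h_i^{-1}$ and their derivatives up to finite order, so that $\mathcal{O}(x_n^\infty)$ agreement of the metrics propagates to $\mathcal{O}(x_n^\infty)$ agreement of all the $P_{i,j}^{(k)}$ on $\Gamma$.
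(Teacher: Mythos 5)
Your proposal is correct and takes essentially the same approach as the paper: both proofs exploit that in boundary normal form $L_{H_i} = -\partial_{x_n}^2 + (\text{terms of lower normal order})$, observe that the $\mathcal{O}(x_n^\infty)$ agreement of $H_1$ and $H_2$ on $\Gamma$ makes the corresponding coefficient operators agree to infinite order there, and then bootstrap the normal-derivative comparison by induction from the Cauchy data. The paper organizes this slightly differently (it first records $L_{H_1}f_1 = L_{H_1}f_2 + \mathcal{O}(x_n^\infty)$ and then uses $\partial_{x_n}^2 = P_1 - L_{H_1}$ applied to both $f_1$ and $f_2$, whereas you write one recursion per $f_i$ and compare the coefficient operators), and it also explicitly invokes elliptic boundary regularity to justify that the equations hold up to $\Gamma$ — a point you tacitly assume — but the underlying argument is the same.
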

\begin{proof}
Since $H_1=H_2+\mathcal{O}(x_n^\infty)$ on $\Gamma$, the coefficients of $L_{H_1}$ and $L_{H_2}$ agree up to $\mathcal{O}(x_n^\infty)$ on $\Gamma$. We have
 \begin{equation}\label{Laplace_of_fi}
 L_{H_1}f_1=L_{H_1}f_2+\mathcal{O}(x_n^\infty),
 \end{equation}
 which follows by differentiating the equation
\begin{equation}\label{Eq_up_B}
 L_{H_1}f_1=0=L_{H_2}f_2.
 \end{equation}
 To see this, note first that since the Dirichlet data of $f_i$, $i=1,2$, is by assumption $C^\infty$ smooth and $H_1$ and $H_2$ are $C^\infty$ smooth up to boundary, we have that the equation above holds up to boundary (see e.g.~\cite[Theorem 5, Sec 6.3.2]{Evans}). Let $x_0\in \Gamma$ and let us calculate at $x_0$:
 \begin{align*}
 \p_{x_n}(L_{H_1}f_2)&=(\p_{x_n}L_{H_1})f_2+L_{H_1}(\p_{x_n}f_2) =(\p_{x_n}L_{H_2})f_2+L_{H_2}(\p_{x_n}f_2)\\
 &=\p_{x_n}(L_{H_2}f_2) =\p_{x_n}(L_{H_1}f_1).
 \end{align*}
 Here in the second equality we have used $H_1=H_2+\mathcal{O}(x_n^\infty)$ at $x_0$ and in the last equality we have used that~\eqref{Eq_up_B} holds at $x_0$. The higher derivatives follow similarly and by using induction.
 
 Now, the conformal Laplacian for the metrics of the given form reads
 $$
 L_{H_i}=-\p_{x_n}^2+P_{i},
 $$
 where $P_i$ is a partial differential operator containing only first order derivatives in $x_n$ and $x'$ derivatives up to second order, and whose coefficients depend only on $H_i$. Thus we can express second order $x_n$ derivatives as
 $$
 \p_{x_n}^2= P_1-L_{H_1}.
 $$
 By assumption we have
 $$
 f_1=f_2 \mbox{ and } \p_{x_n}f_1=\p_{x_n}f_2 \mbox{ on } \Gamma,
 $$
 and consequently we have on $\Gamma$ that
 \begin{align*}
 \p_{x_n}^2|_{x_n=0}f_2&=P_1|_{x_n=0}(f_2)-L_{H_1}|_{x_n=0}(f_2)\\ 
 &=P_1|_{x_n=0}(f_1)-L_{H_1}|_{x_n=0}(f_1) =\p_{x_n}^2|_{x_n=0}f_1.
 \end{align*}
 The claim follows by induction and using~\eqref{Laplace_of_fi}.
\end{proof}

The result is now a consequence of this lemma.

\begin{Proposition}\label{Zpreservjet}
Assume that the conditions of Lemma~\ref{flagreeinfty} are satisfied and let $f_i^l$, $i=1,2$ and $l=1,\ldots, n+1$, be functions satisfying the assumptions for $f_i$ in that lemma. Then the functions $Z_i^l$,
$$
Z_i^l=\frac{f_i^l}{f_i^{n+1}}, \quad i=1,2,\ \ l=1,\ldots,n,
$$
satisfy
$$
 Z_1=Z_2 +\mathcal{O}(x_n^\infty)
$$
on $\Gamma$. Here $Z_i=(Z_i^1,\ldots,Z_i^n)$, $i=1,2$.

In particular if $Z_i$ are coordinate systems, and if we have
 $$
 J_x(H_1)=J_x(H_2) \mbox{ for } x\in \Gamma,
 $$
 then
 $$
 J_x(Z_1^{-1*}H_1)=J_x(Z_2^{-1*}H_2).
 $$
 (Here $x=Z_1(x)=Z_2(x)$.)

\end{Proposition}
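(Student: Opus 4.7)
The plan is to apply Lemma \ref{flagreeinfty} componentwise to each of the $n+1$ pairs $(f_1^l, f_2^l)$ and then transfer the conclusion to the quotients $Z_i^l = f_i^l / f_i^{n+1}$. Applying the lemma to each $l = 1, \ldots, n+1$ yields
\begin{equation*}
f_1^l - f_2^l = \mathcal{O}(x_n^\infty) \text{ on } \Gamma,
\end{equation*}
so every $x_n$-derivative of $f_1^l - f_2^l$ vanishes on $\Gamma$. Combined with the equality of boundary values $f_1^l|_\Gamma = f_2^l|_\Gamma$, which forces all tangential derivatives of $f_1^l - f_2^l$ on $\Gamma$ to vanish as well, every partial derivative of $f_1^l - f_2^l$ vanishes on $\Gamma$. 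In other words, $f_1^l$ and $f_2^l$ have identical $C^\infty$ Taylor jets at each point of $\Gamma$.

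To transfer this to the quotients, I first observe that since $Z_1$ and $Z_2$ are assumed to be coordinate systems up to $\Gamma$, the common boundary value $f_1^{n+1}|_\Gamma = f_2^{n+1}|_\Gamma$ must be nonzero on $\Gamma$; after shrinking $\Omega$ if necessary, I may assume both $f_1^{n+1}$ and $f_2^{n+1}$ are nowhere vanishing. I then rewrite
\begin{equation*}
Z_1^l - Z_2^l = \frac{f_2^{n+1}(f_1^l - f_2^l) - f_2^l(f_1^{n+1} - f_2^{n+1})}{f_1^{n+1} f_2^{n+1}}.
\end{equation*}
Each factor in the numerator has vanishing full jet at every point of $\Gamma$ by the previous step, so the same holds for the numerator; the denominator is smooth and nonvanishing, hence the quotient has vanishing full jet on $\Gamma$, which gives $Z_1^l = Z_2^l + \mathcal{O}(x_n^\infty)$ on $\Gamma$, that is, $Z_1 = Z_2 + \mathcal{O}(x_n^\infty)$.

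For the second statement, note that since the $C^\infty$ jets of $Z_1$ and $Z_2$ at any $x \in \Gamma$ agree and $Z_1(x) = Z_2(x) =: y$, formal inversion of power series (coming from the inverse function theorem) yields that $Z_1^{-1}$ and $Z_2^{-1}$ have identical $C^\infty$ jets at $y$. The pullback $(Z_i^{-1*}H_i)(y)$ is given by the chain rule as a polynomial in the components of $H_i$ at $x$ and the partial derivatives of $Z_i^{-1}$ at $y$; hence the jet of $Z_i^{-1*}H_i$ at $y$ depends only on the jet of $H_i$ at $x$ and the jet of $Z_i^{-1}$ at $y$. Both data agree by the hypothesis $J_x(H_1) = J_x(H_2)$ and the preceding paragraph, so $J_y(Z_1^{-1*}H_1) = J_y(Z_2^{-1*}H_2)$, which is the claim.

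I do not anticipate any significant obstacle: the proposition reduces to applying Lemma \ref{flagreeinfty} $n+1$ times, together with the standard fact that agreement to infinite order along a hypersurface is preserved by rational operations (with nonvanishing denominators) and by pullback of tensor fields. The only bookkeeping point is to ensure nonvanishing of $f_i^{n+1}$ near $\Gamma$, which is automatic once one assumes $Z_i$ to be genuine coordinate charts.
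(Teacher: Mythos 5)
Your argument is correct and follows the same route the paper intends: apply Lemma \ref{flagreeinfty} to each $f_i^l$ to get jet agreement on $\Gamma$, propagate this through the rational expression for $Z_i^l$ (using nonvanishing of $f_i^{n+1}$ near $\Gamma$, which you correctly flag as needed), and then invoke the chain rule for the pullback statement. The paper's own proof is a two-sentence sketch and your write-up simply supplies the details it leaves implicit; there is no substantive divergence.
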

\begin{proof}
 The first claim follows directly from  Lemma~\ref{flagreeinfty}. The latter claim can be proven by using the chain rule to calculate the Taylor coefficients and using the knowledge that $Z_1$ and $Z_2$ have the same Taylor coefficients at the boundary.
\end{proof}

\subsection{Determination near the boundary by Taylor series}
We are ready to combine our results and newly developed tools to prove determination near the boundary. We remind the reader that the procedure is illustrated in Figure~\ref{gauges}. 

We record one more lemma whose main function is to collect all the required assumptions. Recall that $Z$-coordinates depend on given local coordinates $y'$ on the boundary as described in Proposition~\ref{Z_coord_construction}. We choose these coordinates $y'$ to be $g_{\p M}$-harmonic.
\begin{Lemma}\label{determination_if_ab}
Let $(M_i,g_i)$, $i=1,2$, be two Riemannian manifolds with a common boundary portion $\Gamma\subset \p M_1\cap \p M_2$.  Let $c_i$ be functions on $M_i$ satisfying
$$
c_i|_\Gamma=1 \mbox{ and } \p_{\nu_i}c_i|_\Gamma=0.
$$

Let $p\in \Gamma$ and assume that $\psi_i$ are $c_ig_i$-boundary normal coordinate systems on neighborhoods $U_i\subset M_i$ of $p$ that agree on $\Gamma$:
$$
\psi'_1|_\Gamma=\psi'_2|_\Gamma.
$$
Here $\psi_i=(\psi_i',\psi_i^n)$. Assume that $\psi'_1|_\Gamma$ and $\psi'_2|_\Gamma$ are $\Delta_{(g_i)_{\p M}}$-harmonic respectively on $\Gamma$. 

Let $Z_i = W_i \circ \psi_i^{-1}$ denote the $Z_i$-coordinate systems constructed in Proposition \ref{Z_coord_construction} with respect to metrics $c_i g_i$ and boundary coordinates $y' = \psi'_1|_{\Gamma} = \psi'_2|_{\Gamma}$.

Assume that
a)
The jets of the two conformal Riemannian metrics $c_ig_i$, $i=1,2$, in $Z_i\circ \psi_i$-coordinates, satisfy 
 $$
 J_{x}(Z_1^{-1*}\psi_1^{-1*}(c_1g_1))=J_{x}(Z_2^{-1*}\psi_2^{-1*}(c_2g_2)), 
 $$
 for $x$ near $x_0=Z_i\circ \psi_i(p)$ in $\{x^n=0\}$.
 
b)
There are coordinate systems $\phi_i$ on $U_i$ where the metric is conformally real analytic,
$$
\phi_i^{-1*}g_i=s_ih_i, \quad s_i\in C^\infty(\Omega_{\phi_i}), \ h_i \in C^\omega(\Omega_{\phi_i}),
$$
$$
s|_{\Omega_{\phi} \cap \{ x_n = 0 \} } \in C^{\omega}(\Omega_{\phi} \cap \{ x_n = 0 \}).
$$
Here $\Omega_\phi=\phi_i(U_i)$ is an open neighborhood of $\mathbb{H}^n$ intersecting $\{x^n=0\}$. 

If all these assumptions hold, then (after possibly shrinking $U_1$ and $U_2$) we can determine the metrics $g_1$ and $g_2$ up to a diffeomorphism $F: U_1\to U_2$ and a conformal scaling $c \in C^{\infty}(U_1)$ satisfying
 $$
 F|_\Gamma= \mathrm{Id}, \quad c|_\Gamma=1 \mbox{ and } \partial_\nu c|_\Gamma=0
 $$
in the sense that 
 $$
 g_1=c\,F^*g_2 \mbox{ on } U_1.
 $$
 (Here we have denoted $\nu_1=\nu_2=\nu$.)
\end{Lemma}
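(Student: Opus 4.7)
The plan is to lift hypothesis (a) from jet equality in the $Z_i \circ \psi_i$-coordinates to equality of canonical real analytic representatives of the conformal classes, and then translate this back to a local isometry statement.

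Set $\Phi_i := Z_i \circ \psi_i : U_i \to \Omega_i \subset \mathbb{H}^n$; both maps send $p$ to the common point $x_0 = (y'(p),0)$ and agree on $\Gamma$ (where they equal $(y',0)$). Write $G_i := \Phi_i^{-1*}(c_i g_i)$. By hypothesis (b), Proposition~\ref{transformation_to_Z} applies to each $(M_i,g_i,c_i,\psi_i)$ and yields, after shrinking, $G_i = \tilde s_i \tilde h_i$ with $\tilde s_i \in C^\infty(\Omega_i)$ positive and $\tilde h_i \in C^\omega(\Omega_i)$ positive definite. I normalize by determinant:
\[
\hat h_i := \tilde h_i / (\det \tilde h_i)^{1/n} = G_i / (\det G_i)^{1/n},
\]
which is real analytic up to the boundary, positive definite, and satisfies $\det \hat h_i \equiv 1$; it is the unique such representative in the conformal class of $G_i$. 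By hypothesis (a), $J_{x_0} G_1 = J_{x_0} G_2$, and since $\hat h_i$ depends real analytically on the entries of $G_i$ (using $\det G_i > 0$), this gives $J_{x_0} \hat h_1 = J_{x_0} \hat h_2$. Real analytic functions are determined by their Taylor series at a single point, so $\hat h_1 = \hat h_2$ on a common neighborhood $V \subset \mathbb{H}^n$ of $x_0$ (after extending each to a genuine real analytic function on a neighborhood of $x_0$ in $\mathbb{R}^n$ and invoking the identity theorem).

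Setting $\alpha := (\det G_1 / \det G_2)^{1/n}$, it follows that $G_1 = \alpha G_2$ on $V$. Hypothesis (a) also gives $J_x \alpha = J_x (1)$ for every $x \in V \cap \{x^n = 0\}$, so $\alpha - 1$ has vanishing $\infty$-jet on that boundary piece; in particular $\alpha = 1$ and $d\alpha = 0$ there. Define $F := \Phi_2^{-1} \circ \Phi_1$ on $U_1 := \Phi_1^{-1}(V)$ and replace $U_2$ by $F(U_1)$. Since $\Phi_1 = \Phi_2$ on $\Gamma$, we have $F|_\Gamma = \mathrm{Id}$. Pulling $G_1 = \alpha G_2$ back by $\Phi_1$ gives
\[
c_1 g_1 = (\alpha \circ \Phi_1)(c_2 \circ F) \, F^* g_2,
\]
so $g_1 = c F^* g_2$ with $c = (\alpha \circ \Phi_1)(c_2 \circ F)/c_1$.

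To verify the gauge, note that on $\Gamma$ one has $\alpha = 1$, $c_1 = c_2 = 1$, and $F = \mathrm{Id}$, so $c|_\Gamma = 1$. For the normal derivative, $\partial_\nu c_1|_\Gamma = 0$ by hypothesis; $\partial_\nu(\alpha \circ \Phi_1)|_\Gamma = 0$ because $d\alpha$ vanishes on the boundary piece; and $\partial_\nu(c_2 \circ F)|_\Gamma = dc_2(DF(\nu))|_\Gamma = 0$ since $c_2|_\Gamma$ is constant (killing the tangential component of $DF(\nu)$) while $\partial_{\nu_2} c_2 = 0$ (killing the normal component). The logarithmic derivative rule then yields $\partial_\nu c|_\Gamma = 0$. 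The main technical point is the normalization step: verifying that $\hat h_i$ is genuinely real analytic up to the boundary (using positivity of $\det \tilde h_i$) and that the jet identity transfers cleanly from $G_i$ to $\hat h_i$, so that the identity theorem produces the pointwise equality $\hat h_1 = \hat h_2$ from which all of the above flows.
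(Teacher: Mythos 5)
Your proof is correct and follows essentially the same route as the paper: pass to $Z \circ \psi$-coordinates, form the determinant-normalized representative $G_i/(\det G_i)^{1/n}$ (your $\hat h_i$, the paper's $\widehat G_i$), invoke Proposition~\ref{transformation_to_Z} to see it is real analytic up to the boundary, transfer the jet equality, apply the identity theorem, then unwind with $F = \Phi_2^{-1}\circ\Phi_1$ and the same formula for $c$. Your verification of $\partial_\nu c|_\Gamma = 0$ via the tangential/normal split of $dc_2(DF(\nu))$ is a slightly cleaner packaging of the paper's coordinate computation, but the argument is the same.
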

\begin{proof}
 Let us denote
 $$
 G_i=Z_i^{-1*}\psi_i^{-1*}(c_ig_i).
 $$
 We define determinant normalized metrics by
 $$
 \widehat{G}_i=\frac{G_i}{\det(G_i)^{1/n}}.
 $$
 We first note that we have
 \begin{equation}\label{G_hat_ra}
 \widehat{G}_i=\frac{\tilde{h}_i}{\det(\tilde{h}_i)^{1/n}}\in C^\omega(\Omega_Z),
 \end{equation}
 where 
 $$
 \tilde{h}_i=T_i^{-1*}h_i \in C^\omega(\Omega_Z).
 $$
 Here $\Omega_Z$ is an open neighborhood of $\mathbb{H}^n$ intersecting $\{x_n =0\}$. To see this, note that for $i=1,2$, $c_i$, $s_i$, $\phi_i$, $\psi_i$, $g_i$, $h_i$, $Z_i$ satisfy the assumptions of Proposition~\ref{transformation_to_Z}. It follows that
 $$
 T_i=Z_i\circ\psi_i\circ \phi_i^{-1}
 $$
 is a real analytic change of coordinates, $T_i \in C^\omega(\Omega_{\phi_i})$. Thus we have~\eqref{G_hat_ra} since the $s_i$ conformal factors cancel out.
 
 Since by assumption we have that
 $$
 J_{x_0}G_1=J_{x_0}G_2,
 $$
 it follows that we also have
 \begin{equation}\label{aa}
 J_{x_0}\left(\frac{G_1}{\det(G_1)^{1/n}}\right)=J_{x_0}\left(\frac{G_2}{\det(G_2)^{1/n}}\right).
 \end{equation}
 Consequently, we have
 \begin{align*}
 J_{x_0}&\left(\frac{\tilde{h}_1}{\det(\tilde{h}_1)^{1/n}}\right)=J_{x_0}\widehat{G}_1=J_{x_0}\left(\frac{G_1}{\det(G_1)^{1/n}}\right)=J_{x_0}\left(\frac{G_2}{\det(G_2)^{1/n}}\right) \\
 &=J_{x_0}\widehat{G}_2=J_{x_0}\left(\frac{\tilde{h}_2}{\det(\tilde{h}_2)^{1/n}}\right).
 \end{align*}
 Here in the third equation, we used~\eqref{aa}.
 
 Now we have that two matrix fields 
 $$
 \frac{\tilde{h}_1}{\det(\tilde{h}_1)^{1/n}} \mbox{ and } \frac{\tilde{h}_2}{\det(\tilde{h}_2)^{1/n}}
 $$
 defined on $\Omega_Z\subset \mathbb{H}^n$ that are real analytic up to boundary have the same Taylor expansion at $x_0 \in \{ x^n=0 \}$. Thus 
 $$
 \frac{\tilde{h}_1}{\det(\tilde{h}_1)^{1/n}}=\frac{\tilde{h}_2}{\det(\tilde{h}_2)^{1/n}} \mbox{ on } \Omega',
 $$
 where $\Omega'$ is an open neighborhood of $x_0$ in $\R^n$ that intersects the set $\{x^n=0\}$.
 
 Unwinding all the scalings and coordinate transformations shows first that
 $$
 G_1=\frac{\det(G_1)^{1/n}}{\det(G_2)^{1/n}}G_2
 $$
 and then
 \begin{align*}
 c_1g_1&=\psi_1^{*}Z_1^{*}G_1=\psi_1^{*}Z_1^{*}\left(\frac{\det(G_1)^{1/n}}{\det(G_2)^{1/n}}G_2\right) \\
 &=\psi_1^{*}Z_1^{*}\left(\frac{\det(G_1)^{1/n}}{\det(G_2)^{1/n}}\right)\psi_1^{*}Z_1^{*}Z_2^{-1*}\psi_2^{-1*}(c_2g_2).
 \end{align*}
 Thus defining
 $$
 F=\psi_2^{-1}\circ Z_2^{-1}\circ Z_1\circ \psi_1
 $$
 and
 $$
 c=\frac{c_2|_F}{c_1}\psi_1^{*}Z_1^{*}\left(\frac{\det(G_1)^{1/n}}{\det(G_2)^{1/n}}\right)
 $$
 ensures the main claim, i.e.\ that $g_1=cF^*g_2$, since
 $$
 F|_\Gamma=\mathrm{Id} \mbox{ and } c|_\Gamma=1,
 $$
 where the latter holds since $G_1|_{x_n=0}=G_2|_{x_n=0}$. That
 \begin{equation}\label{n_der_c}
 \p_{\nu} c|_\Gamma=0
 \end{equation}
 holds follows by noticing that the gradient of all the factors in the formula for $c$ above vanishes on $\Gamma$. More precisely, calculating in $\psi_1$-coordinates, we have that 
 $$
 \p_\nu\,\left[\psi_1^{*}Z_1^{*}\left(\frac{\det(G_1)^{1/n}}{\det(G_2)^{1/n}}\right)\right] \mbox{ on } \Gamma
 $$
 reads
 $$
 \p_{n} \left(\frac{\det(G_1)^{1/n}}{\det(G_2)^{1/n}}\right)\circ Z_1 \mbox{ on } \{x^n=0\}\cap \Omega_{\phi_1}.
 $$
 This is zero since that all the first order derivatives of 
 $$
 \frac{\det(G_1)}{\det(G_2)}
 $$ 
 vanish on $\{x^n=0\} \cap \Omega_{\phi_1}$, because the jets of $G_1$ and $G_2$ agree there. Similarly, in $\psi_1$-coordinates we have that $\p_\nu|_\Gamma c_2\circ F$ on $\Gamma$ reads
 $$
 \p_n \left[c_2 \circ (\psi_2^{-1}\circ Z_2^{-1}\circ Z_1)\right] \mbox{ on } \{x^n=0\}\cap \Omega_{\phi_1}.
 $$
 This is zero since all the first order derivatives of $c_2\circ \psi_2^{-1}$ vanish by the assumption on $c_2$ and by the fact that $\psi_2$ is a boundary normal coordinate system. These observations (and the assumptions on $c_1$) yield~\eqref{n_der_c}, which concludes the proof.
\end{proof}

The determination result for the metric near the boundary is the following.
\begin{Proposition}[]\label{det_near_bndr}
Let $(M_1, g_1)$ and $(M_2, g_2)$ be locally conformally real analytic manifolds of dimension $n = \dim(M_i) \geq 3$. Let $\Gamma$ be a nonempty open common boundary portion of both $M_i$, and assume that the DN maps coincide on $\Gamma$: for any $f \in C^{\infty}_c(\Gamma)$ we have
 $$
 N_{g_1}f|_{\Gamma}=N_{g_2}f|_{\Gamma}. 
 $$

Then for any $p \in \Gamma$ there is a diffeomorphism $F: U_1 \to U_2$ and a positive function $c \in C^{\infty}(U_1)$, for some neighborhoods $U_i$ of $p$ in $M_i$, such that
$$
g_1=c\, F^*g_2 \text{ in $U_1$}
$$
and 
\[
c|_{U_1 \cap \Gamma} =1, \ \p_\nu c|_{U_1 \cap \Gamma} =0, \ F|_{U_1 \cap \Gamma}=\mathrm{Id}.
\]
The diffeomorphism $F$ is of the form
$$
F=\psi_2^{-1}\circ Z_2^{-1}\circ Z_1\circ \psi_1,
$$
where $\psi_i$ and $Z_i$ are respectively boundary normal coordinates and $Z$-coordinates with respect to scaled metrics $c_ig_i$ introduced in Lemma~\ref{lemma_boundarydetermination} and Proposition~\ref{Z_coord_construction}.
\end{Proposition}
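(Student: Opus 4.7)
Fix $p \in \Gamma$. The plan is to verify hypotheses (a) and (b) of Lemma~\ref{determination_if_ab} near $p$; applying that lemma then produces the desired $U_i$, conformal factor $c$, and diffeomorphism $F$ with the explicit form $F = \psi_2^{-1}\circ Z_2^{-1}\circ Z_1 \circ \psi_1$.

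First I would apply Lemma~\ref{lemma_boundarydetermination}(b) separately to $(M_i,g_i)$ to obtain positive $c_i \in C^\infty(M_i)$ with $c_i|_{\p M} = 1$ and $\p_\nu c_i|_{\p M} = 0$ satisfying the mean-curvature vanishing conditions near $\Gamma$, and set $\tilde g_i = c_i g_i$. Since $c_i|_{\p M} = 1$, the induced metrics $(\tilde g_i)_{\p M}|_\Gamma = (g_i)_{\p M}|_\Gamma$ agree across $i=1,2$ (the agreement of the induced metrics already follows from the leading symbol of $N_{g_i}|_\Gamma$ extracted in the proof of Lemma~\ref{lemma_boundarydetermination}(a)). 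Choose boundary coordinates $y'$ on a neighborhood of $p$ in $\Gamma$ that are harmonic with respect to this common induced metric, build $\tilde g_i$-boundary normal coordinates $\psi_i$ from $y'$, and build the $Z$-coordinates $Z_i = W_i \circ \psi_i^{-1}$ as in Proposition~\ref{Z_coord_construction} from the same boundary data $y'$. Lemma~\ref{lemma_boundarydetermination_realanalytic} applied on each side, together with $N_{g_1}|_\Gamma = N_{g_2}|_\Gamma$, then yields
\[
J_x\bigl(\psi_1^{-1*}\tilde g_1\bigr) = J_x\bigl(\psi_2^{-1*}\tilde g_2\bigr)
\]
at every $x$ near $\psi_i(p)$ in $\{x^n = 0\}$. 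Hypothesis (b) of Lemma~\ref{determination_if_ab} is immediate from the locally conformally real analytic assumption on $(M_i,g_i)$.

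The remaining and main step is to upgrade the jet agreement from $\psi_i$-coordinates to $Z_i$-coordinates through Proposition~\ref{Zpreservjet}. That proposition needs the functions $f_i^l = w_i^l \circ \psi_i^{-1}$ to have matching Cauchy data on $\{x^n=0\}$ near $\psi_i(p)$. Matching Dirichlet data is automatic from the construction of $w_i^l$ (both traces equal $y^l$ for $l \leq n-1$, and $0$ or $1$ for $l = n, n+1$). The main obstacle is matching the Neumann data: the Dirichlet data of $w_i^l$ on $\p M_i$ may differ outside a neighborhood of $p$ in $\Gamma$, so the hypothesis $N_{g_1} f|_\Gamma = N_{g_2} f|_\Gamma$ for $f \in C^\infty_c(\Gamma)$ does not apply to $w_i^l|_{\p M_i}$ directly.

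I would resolve this by a Runge-type density argument in the spirit of Proposition~\ref{prop_runge_approximation_cauchy}: approximate each $w_i^l$ in $C^\infty$ on a compact neighborhood of $p$ in $\overline{M_i}$ by solutions $v_i^{l,k}$ of $L_{\tilde g_i} v_i^{l,k} = 0$ whose Dirichlet traces lie in $C^\infty_c(\Gamma)$ and can be chosen common to $i = 1, 2$. Since $c_i|_{\p M} = 1$ and $\p_\nu c_i|_{\p M} = 0$ give $N_{\tilde g_i} = N_{g_i}$, the hypothesis $N_{g_1} = N_{g_2}$ on $\Gamma$ applies to these common compactly supported traces and forces $\p_\nu v_1^{l,k}|_\Gamma = \p_\nu v_2^{l,k}|_\Gamma$; passing to the limit gives the desired $\p_\nu f_1^l = \p_\nu f_2^l$ near $\psi_i(p)$. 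With Cauchy data matched and the metric jets already agreeing in $\psi_i$-coordinates, Proposition~\ref{Zpreservjet} delivers
\[
J_{x_0}\bigl(Z_1^{-1*}\psi_1^{-1*}\tilde g_1\bigr) = J_{x_0}\bigl(Z_2^{-1*}\psi_2^{-1*}\tilde g_2\bigr),
\]
which is hypothesis (a) of Lemma~\ref{determination_if_ab}. Invoking that lemma then concludes the proof and supplies the explicit formulas for $F$ and $c$. The genuinely hard step is the Runge-based Cauchy-data matching for the $w_i^l$; the remainder is reassembly of the coordinate machinery already developed in the paper.
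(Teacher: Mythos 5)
Your overall plan is correct and matches the paper's: verify hypotheses (a) and (b) of Lemma~\ref{determination_if_ab}, using Lemma~\ref{lemma_boundarydetermination} for the jet agreement in $\psi_i$-coordinates, choosing common $g_{\p M}$-harmonic boundary coordinates, and invoking Proposition~\ref{Zpreservjet} to upgrade the jet agreement to $Z_i$-coordinates. You also correctly identify that the key step is matching the Cauchy data of the $w_i^l$ on $\Gamma$.

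However, the way you resolve that step — a Runge-approximation detour to ``match'' the Neumann data in the limit — is both unnecessary and problematic. The functions $w_i^l$ are not pre-existing objects whose boundary traces are forced on you; they are constructed by solving Dirichlet problems in Proposition~\ref{Z_coord_construction}, and you are free to choose the global Dirichlet data. The paper's observation is precisely that one can choose the smooth extensions $\overline{\psi}^l_i$ of $\psi'_i|_{\Gamma'}$ so that $\overline{\psi}^l_1 = \overline{\psi}^l_2$ and $\mathrm{supp}(\overline{\psi}^l_i) \subset \Gamma$, i.e.\ they are \emph{the same} function in $C^\infty_c(\Gamma)$. Then
\[
\p_{\nu_{\tilde g_1}} w_1^l|_\Gamma = N_{\tilde g_1}\overline{\psi}^l_1|_\Gamma = N_{\tilde g_2}\overline{\psi}^l_2|_\Gamma = \p_{\nu_{\tilde g_2}} w_2^l|_\Gamma
\]
immediately from the hypothesis, with no approximation and no limit. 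Your Runge route has two concrete gaps: first, Runge density on each $(M_i, \tilde g_i)$ separately gives no mechanism for choosing the approximants' Dirichlet traces ``common to $i=1,2$''; and second, even if it did, the approximation is in an interior topology (e.g.\ $L^2_{\mathrm{loc}}$ or $C^\infty$ on compact subsets of the interior), and passing to the limit in the boundary normal derivative $\p_\nu v_i^{l,k}|_\Gamma \to \p_\nu w_i^l|_\Gamma$ requires justification that you do not supply. Both difficulties vanish once you simply construct the $w_i^l$ with identical, compactly supported Dirichlet data. (The Runge argument \emph{does} enter, but only as in the paper — once, in Proposition~\ref{Z_coord_construction}, to produce the Dirichlet data $\theta$ for $w^n$ with vanishing trace but nonvanishing normal derivative near $p$.)
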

\begin{proof}
We only need to verify the assumptions of the previous lemma. Let $p\in \Gamma$ and let $\phi_i$ denote the coordinate systems where the metrics $g_i$ are conformally real analytic.

By Lemma~\ref{lemma_boundarydetermination}, in $c_ig_i$-boundary normal coordinates $\psi_i$ we have 
$$
J_x(\psi_1^{-1*}(c_1g_1))=J_x(\psi_2^{-1*}(c_2g_2))
$$
for $x\in \{x^n=0\}\cap \psi_i(\Gamma)$ after possibly decreasing $\Gamma$. Note that $\psi_1|_\Gamma=\psi_2|_\Gamma$. 

It follows that the induced metrics on the boundary $(g_i)_{\Gamma}$ are the same, and we can choose coordinates near $p$ on $\Gamma'\subset\Gamma$ on the boundary which are $\Delta_{(g_i)_{\Gamma}}$-harmonic for $i=1,2$. Thus, we can apply Lemma~\ref{lemma_boundarydetermination} again to have the above equality for jets in $c_ig_i$-boundary normal coordinates, where
$$
\psi_1'|_{\Gamma'}=\psi_2'|_{\Gamma'}
$$
and where $\psi'_1|_{\Gamma'}$ and $\psi'_2|_{\Gamma'}$ are $\Delta_{(g_i)_{\p M}}$-harmonic respectively on $\Gamma'$. (Recall that boundary normal coordinates are constructed by first choosing coordinates on the boundary and emitting geodesics in normal directions into the manifolds. Inverting this map gives boundary normal coordinates.)



The $Z_i$-coordinates are constructed by solving Dirichlet problems with smooth continuations of the functions $\psi'_i|_{\Gamma'}$ as the Dirichlet data. This is described in Proposition~\ref{Z_coord_construction}. Denote the continuations by $\overline{\psi}_i\in C^\infty(\p M_i)$. Since $\psi_1|_{\Gamma'}=\psi_2|_{\Gamma'}$, we can redefine $\Gamma'$ so that we still have $\bar{\psi}_1|_{\Gamma'}=\bar{\psi}_2|_{\Gamma'}$ and that the continuations satisfy  $\mbox{supp}(\overline{\psi}_i)\subset \Gamma'$. For convenience, we denote $\Gamma'=\Gamma$ in the following.

The crucial point is now to notice that since the Dirichlet-to-Neumann maps agree
$$
N_{c_1g_1} f|_\Gamma=N_{c_2g_2}f|_\Gamma,
$$
for $f \in C^{\infty}_c(\Gamma)$, we have that on $\Gamma$
$$
\nu_{c_1g_1} w_1^l = N_{c_1g_1}\overline{\psi}^l_1=N_{c_2g_2}\overline{\psi}^l_2=\nu_{c_2g_2} w_2^l.
$$
In $\psi_i$ coordinates the above reads that the coordinates $Z_i$ are constructed by using functions, call them $f_i^l$, that have the same local Cauchy data and thus satisfy the assumptions of Proposition~\ref{Zpreservjet}. 

Thus 
$$
 Z_1=Z_2 +\mathcal{O}(x_n^\infty)
$$
on the intersection of $\{x^n=0\}$ and the common domain of the $Z_i$ coordinates. It follows that after transforming into $Z_i$ coordinates it still holds that
$$
J_{x_0}(Z_1^{-1*}\psi_1^{-1*}(c_1g_1))=J_{x_0}(Z_2^{-1*}\psi_2^{-1*}(c_2g_2))
$$
(Here ${x_0}=Z_i\circ \psi_i(p_0)$.)

By assumption, the manifolds $M_1$ and $M_2$ are locally conformally real analytic. Thus the assumption b) in Lemma~\ref{determination_if_ab} holds. All the assumptions of previous proposition are now satisfied and we have the claim.
\end{proof}

We prove next that the mapping $F$ generated in the previous theorem using specifically chosen coordinates and scalings is actually independent of these.
\begin{Proposition} \label{F_indep}
  Let $F:U\to F(U)$ and $\widetilde{F}:\widetilde{U}\to F(\widetilde{U})$ be the local (conformal) diffeomorphisms generated in Proposition~\ref{det_near_bndr} with respect to 
 $$
 \psi_i \mbox{ and } \tilde{\psi}_i, \quad Z_i \mbox{ and  } \widetilde{Z}_i
 $$
 on connected open sets $U$ and $\widetilde{U}$ respectively, $i=1,2$. Assume $\p M\cap U \cap \widetilde{U}\neq \emptyset$.
 
 Assume that the Dirichlet-to-Neumann maps agree on the union of the supports of the functions $w^l_i$ and $\tilde{w}^l_i$ used to construct $Z_i$- and $\widetilde{Z}_i$-coordinates (as in Proposition~\ref{Z_coord_construction}) on $U$ and $\widetilde{U}$. Assume also that $w^l_1$ and $w^l_2$, $l=1,\ldots,n+1$, are constructed using the same Dirichlet data on $\p M$. Assume similarly for $\tilde{w}^l_1$ and $\tilde{w}^l_2$.
 
 We have
 $$
 F=W_2^{-1}\circ W_1 \mbox{ and } \widetilde{F}=\widetilde{W}_2^{-1}\circ \widetilde{W}_1
 $$
 and 
 $$
 F=\widetilde{F}
 $$ 
 on $U\cap \widetilde{U}$. Here $W_i=(W_i^1,\ldots,W_i^n)$, $i=1,2$, (similarly $\widetilde{W}_i$) are as in Proposition~\ref{Z_coord_construction}. 
 
 Also, since 
 $$
 g_1=cF^*g_2 \mbox{ and } g_1=\tilde{c}\widetilde{F}^*g_2, \mbox{ on } U\cap \widetilde{U},
 $$
 we have $c=\tilde{c}$ on $U\cap \widetilde{U}$.
 \end{Proposition}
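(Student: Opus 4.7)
The plan is to first rewrite $F$ and $\widetilde{F}$ in coordinate-free form and then identify them via unique continuation. Using the relation $Z_i = W_i \circ \psi_i^{-1}$, the composition $F = \psi_2^{-1}\circ Z_2^{-1}\circ Z_1 \circ \psi_1$ telescopes to $F = W_2^{-1}\circ W_1$, and analogously $\widetilde{F} = \widetilde{W}_2^{-1}\circ \widetilde{W}_1$. This already removes any dependence on the boundary normal coordinates $\psi_i$, so the content of the proposition reduces to proving $W_2^{-1}\circ W_1 = \widetilde{W}_2^{-1}\circ \widetilde{W}_1$ on $U\cap \widetilde{U}$. Equivalently, I will show $F^*\widetilde{W}_2^l = \widetilde{W}_1^l$ for each $l=1,\ldots,n$, i.e.\ that the map $F$ built from the \emph{first} set of solutions also intertwines the \emph{second} pair of $W$-coordinates.

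For this I plan to pull the defining solutions $\widetilde{w}_2^l$ back via $F$ and invoke boundary unique continuation for $L_{g_1}$. Since $g_1 = c F^*g_2$, the conformal scaling law $L_{cg}u = c^{-(n+2)/4}L_g(c^{(n-2)/4}u)$, applied with $c$ replaced by $c^{-1}$ and $g$ by $g_1$, shows that
\[
u_1^l := c^{-(n-2)/4}\, F^*\widetilde{w}_2^l
\]
satisfies $L_{g_1}u_1^l = 0$ on $U\cap \widetilde{U}$. The conditions $c|_{\partial M}=1$, $\p_\nu c|_{\partial M}=0$, $F|_{\partial M}=\mathrm{Id}$ together with $F^*g_2|_{\Gamma} = g_1|_{\Gamma}$ force $dF$ to send the $g_1$-unit inward normal to the $g_2$-unit inward normal along $\Gamma$; a short computation then yields
\[
u_1^l|_{\Gamma} = \widetilde{w}_2^l|_{\Gamma}, \qquad \p_{\nu_{g_1}} u_1^l|_{\Gamma} = N_{g_2}\bigl(\widetilde{w}_2^l|_{\partial M}\bigr)\Big|_{\Gamma}.
\]
By the hypothesis that $\widetilde{w}_1^l$ and $\widetilde{w}_2^l$ share the same Dirichlet data on $\partial M$, and that $N_{g_1}$ and $N_{g_2}$ agree on those data along $\Gamma$, the functions $u_1^l$ and $\widetilde{w}_1^l$ both solve $L_{g_1}u = 0$ with identical Cauchy data on $\Gamma$.

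Standard boundary unique continuation for the second-order elliptic operator $L_{g_1}$ then gives $u_1^l = \widetilde{w}_1^l$ on a connected neighborhood of $\Gamma$, after possibly shrinking $U\cap \widetilde{U}$; equivalently, $F^*\widetilde{w}_2^l = c^{(n-2)/4}\widetilde{w}_1^l$. Taking the ratio with the analogous identity for $l = n+1$ cancels the conformal prefactor and yields $F^*\widetilde{W}_2^l = \widetilde{W}_1^l$, so $F = \widetilde{W}_2^{-1}\circ\widetilde{W}_1 = \widetilde{F}$. The equality $c = \widetilde{c}$ is then immediate from $g_1 = c F^*g_2 = \widetilde{c}\,F^*g_2$. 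The main obstacle in this plan is the Cauchy data/unique continuation step: one must carefully use $\p_\nu c|_{\partial M}=0$ to kill the first-order contribution of the conformal prefactor on $\Gamma$, verify that $dF$ matches the inward unit normals on $\Gamma$, and arrange that the connected neighborhood produced by UCP actually covers the portion of $U\cap \widetilde{U}$ on which the conclusion is claimed, possibly after a final shrinkage.
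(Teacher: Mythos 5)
Your proof is correct and follows the paper's strategy: telescope to $F = W_2^{-1}\circ W_1$, show via the conformal scaling law that $c^{-(n-2)/4}F^*\tilde{w}_2^l$ solves $L_{g_1}u=0$ with the same Cauchy data on $\Gamma\cap\widetilde{\Gamma}$ as $\tilde{w}_1^l$ (using $c|_{\Gamma}=1$, $\partial_\nu c|_{\Gamma}=0$, and the matching of normals under $dF$), then apply unique continuation and cancel the conformal prefactor in the quotient defining $\widetilde{W}_1^l$. The one small variation is that you obtain $F_*\nu_{g_1}=\nu_{g_2}$ directly from the pointwise isometry $g_1|_{\Gamma}=F^*g_2|_{\Gamma}$ together with $F|_{\Gamma}=\mathrm{Id}$, while the paper deduces it from the first-order agreement of the $Z$-jets on $\{x^n=0\}$; both are valid.
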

\begin{proof}
 We show that both the functions
 $$
 \tilde{w}^l_1 \mbox{ and } c^{-\frac{n-2}{4}}F^*\tilde{w}^l_2
 $$
 satisfy the same equation 
 $$
 L_{g_1}\tilde{w}^l_1=0=L_{g_1}c^{-\frac{n-2}{4}}F^*\tilde{w}^l_2
 $$
 on $\Int(U_1\cap \widetilde{U}_1)$ with the same Cauchy data on $\Gamma\cap \widetilde{\Gamma}$. Note that there is $F$ and $c$ indeed in these formulas and not $\widetilde{F}$ and $\tilde{c}$.
 The first equality is a part of the definition of $\tilde{w}^l_1$, $l=1,\ldots, n+1$. For the second one, we simply calculate
 \begin{multline*}
 L_{g_1}c^{-\frac{n-2}{4}}F^*\tilde{w}^l_2=L_{cF^*g_2}c^{-\frac{n-2}{4}}F^*\tilde{w}^l_2 \\=c^{-\frac{n+2}{4}}L_{F^*g_2}F^*\tilde{w}^l_2 
 =c^{-\frac{n+2}{4}}F^*L_{g_2}\tilde{w}^l_2=0.
 \end{multline*}
 
 The functions $\tilde{w}^l_1$ and $\tilde{w}^l_2$ have the same Dirichlet data by assumption on $\Gamma\cap \widetilde{\Gamma}$. Since the Dirichlet-to-Neumann maps agree on their support, they also have the same Cauchy data on $\Gamma\cap \widetilde{\Gamma}$. We know that $F$ is identity on $\Gamma$ and $c=1$ on $\Gamma$. Thus $\tilde{w}^l_1$ and $c^{-\frac{n-2}{4}}F^*\tilde{w}^l_2$ have the same Dirichlet data on $\Gamma\cap \widetilde{\Gamma}$. 
 
 We also know that 
 $$
 F=\psi_2^{-1}\circ Z_2^{-1}\circ Z_1\circ \psi_1.
 $$
 Viewing the differential of $F$ using $\psi_2$ and $\psi_1$ coordinates and recalling that the jets of $Z_2$ and $Z_1$ agree on $\{x^n=0\}$ shows that
 $$
 F_*\p_{\nu_1}=\p_{\nu_2} \mbox{ on } \Gamma.
 $$
 We conclude that $\tilde{w}^l_1$ and $F^*\tilde{w}^l_2$ have the same Cauchy data on $\Gamma\cap \widetilde{\Gamma}$.
 
 Since the function $c$ also satisfies
 $$
 \p_{\nu_1} c|_\Gamma =0,
 $$
 it finally follows that $\tilde{w}^l_1$ and $c^{-\frac{n-2}{4}}F^*\tilde{w}^l_2$ have the same Cauchy data on $\Gamma\cap \widetilde{\Gamma}$. Since they solve the same elliptic equation in $U \cap \widetilde{U}$, by unique continuation~\cite[Theorem 3.3.1]{Isakov_book} (or e.g.~\cite[Section 4.3.]{Leis_book}) we have 
 $$
 \tilde{w}^l_1=c^{-\frac{n-2}{4}}F^*\tilde{w}^l_2 \mbox{ on } U\cap \widetilde{U}.
 $$
 Thus we have that
 $$
 \widetilde{W}_1^l=\frac{\tilde{w}_1^l}{\tilde{w}_1^{n+1}}=\frac{F^*\tilde{w}_2^l}{F^*\tilde{w}_2^{n+1}}=F^*\widetilde{W}^l_2 \text{ on $U \cap \widetilde{U}$}.
 $$
 (The factors $c^{-\frac{n-2}{4}}$ cancel out.)
 
 Since 
 $$
 \widetilde{F}=\tilde{\psi}_2^{-1}\circ \widetilde{Z}_2^{-1}\circ \widetilde{Z}_1\circ \tilde{\psi}_1=\widetilde{W}_2^{-1}\circ \widetilde{W}_1,
 $$
 we have that
 \begin{align*}
 \widetilde{F}&=\widetilde{W}_2^{-1}\circ\widetilde{W}_1= \widetilde{W}_2^{-1}\circ \widetilde{W}_2\circ F=F
  \end{align*}
  holding on $U\cap \widetilde{U}$. This proves the claims on $F$ and $\widetilde{F}$.
  Since
  $$
  cF^*g_2=g_1=\tilde{c}\widetilde{F}^*g_2=\tilde{c}F^*g_2 \mbox{ on } U\cap \widetilde{U},
  $$
  we also have $c=\tilde{c}$ on $U\cap \widetilde{U}$.
 \end{proof}

\begin{proof}[Proof of Proposition \ref{near_whole_bndr}]
Assume that $M_1$ and $M_2$ have a common boundary $\p M$, and that $N_{g_1} = N_{g_2}$ on $\partial M$. Proposition \ref{det_near_bndr} shows that for each $p \in \partial M$ there is a neighborhood $U_p$ in $M_1$, a local diffeomorphism $F_p$ and a positive function $c_p$ so that $g_1 = c_p F_p^* g_2$ in $U_p$. Proposition \ref{F_indep} shows that these locally defined diffeomorphisms and functions agree on the overlaps of their domains. This yields smooth maps $F: U \to M_2$ and $c: U \to \mathbb{R}_+$, where $U$ is some neighborhood of $\partial M$ in $M_1$. The mapping $F$ is injective on each $U_p$, it satisfies $g_1=cF^*g_2$ in $U$, and $F|_{\partial M} = \mathrm{Id}$.  




It remains to show that $F$ is globally injective in some neighborhood of $\partial M$ (cf.~\cite[Lemma 7.3]{KS}).
By compactness of the boundary $\inf_{p\in\p M}{\text{Inj}_i(p)}=c_i>0$, where $\text{Inj}_i(p)$ is the injectivity radius at $p$. By the continuity of $F$, and possibly after shrinking $U$, we may assume $U \subset \cup_{p \in \p M}B_1(p,r)$ and $F:U \to \cup_{p \in \p M}B_2(p,r)$ where $r< \min(c_1,c_2)$. Since we can cover $\p M$ by sets $U_p$, where $F$ is injective, we may further assume that there is $\eps$, with $r>\eps>0$, such that $F$ is injective on each ``ball of injectivity'' $B_1(p,3\eps)$ for $p\in \p M$.

Since the boundary $\p M$ is the same for both manifolds, we may assume that there is a boundary preserving diffeomorphism $\Sigma:U_1 \subset \cup_{p \in \p M}B_1(p,r)\to U_2=\cup_{p \in \p M}B_2(p,r)$. It follows that there is one-to-one correspondence with paths in $U_1$ and $U_2$. Consequently, since the metrics $g_1$ and $g_2$ are continuous, there is $C>1$, such that 
$$
\frac{1}{C}d_1^{U_1}(x,y)\leq d_2^{U_2}(\Sigma(x),\Sigma(y))\leq C d_1^{U_1}(x,y).
$$
Here 
$$
d_i^{U_i}(x,y)=\inf_{\gamma\subset U_i, \gamma:x\simeq y}l_i(\gamma).
$$
(If we knew that $M_1$ and $M_2$ are diffeomorphic by a boundary preserving global diffeomorphism, we could just use the normal distances $d_i$ of $g_i$, $i=1,2$.)

If $K$ is any compact subset of $U_1$ containing the boundary, uniform continuity implies that there is $\eps'<\eps$ such that
\[
d_2(F(p),F(q)) < \eps/C \text{ whenever } d_1(p,q) < \eps', \ p, q \in K.
\]
In particular, for some $\eps' < \eps$ we have that $B_1(p,\eps') \subset U_1$ for all $p \in \p M$ and 
\begin{equation}\label{squeeze}
F(B_1(p,\eps'))\subset B_2(p,\eps/C), \qquad p \in \p M.
\end{equation}
We redefine
$$
U_1=\cup_{p \in \p M}B_1(p,\eps').
$$
Now, if $x_1,x_2\in U_1$ are such that $F(x_1)=F(x_2)$, then $x_i\in B_1(\pi_i,\eps')$, $\pi_i\in \p M$, and we have
\begin{align*}
d_1(\pi_1,x_2)&\leq d_1(\pi_1,\pi_2)+d_1(\pi_2,x_2)< d_1^{U_1}(\pi_1,\pi_2)+\eps' \\
&\leq C d_2^{U_2}(\pi_1,\pi_2)+\eps'\leq C d_2^{U_2}(\pi_1,F(x_1))+C d_2^{U_2}(F(x_2),\pi_2)+\eps'.
\end{align*}
Now, since we have $d_2(\pi_i,F(x_i))<\eps/C<r$ by~\eqref{squeeze}, where $r$ is less than the injectivity radius of the point $\pi_i$, it follows that $d_2(\pi_i,F(x_i))=d_2^{U_2}(\pi_i,F(x_i))$. Substituting this to the above, and using~\eqref{squeeze} again, we have
$$
d_1(\pi_1,x_2)\leq  C d_2(\pi_1,F(x_1))+C d_2(F(x_2),\pi_2)+\eps'< 3\eps.
$$
Thus $x_2$ belongs to the injectivity ball of $\pi_1$, as does $x_1$. Consequently $x_1=x_2$, and $F$ is globally injective on $U_1$.
\end{proof}

\section{Green's functions agree locally}

We proceed by showing that the Green's functions for the conformal Laplacians on $(M_1,g_1)$ and $(M_2, g_2)$ agree on $U\times U$ up to a local diffeomorphism $F: U \to F(U)$ and scaling $c$ found in Proposition~\ref{det_near_bndr}. The proof is analogous to the proof of~\cite[Corollary 3.5]{GuSaBar}. 

The (Dirichlet) Green's function for the conformal Laplacian on a Riemannian manifold $(M,g)$ with boundary is the unique solution to
\begin{align*}
L_gG(y,\cdot)&=\delta_y \mbox{ on } M \\
G(y,\cdot)&=0 \mbox{ on } \p M.
\end{align*}
Here $\delta_y$ is defined with respect to Riemannian volume form so that $\int_M \delta_y f \,dV_g=f(y)$. Note that $G$ is not necessarily positive.

We first record a fact about the Schwartz kernel of the Dirichlet-to-Neumann map of the conformal Laplacian.
\begin{Lemma}\label{Schwartz_kernel}
 The Schwartz kernel $\mathcal{N}$ of $N_g$ is given for $p,p'\in \p M$, $p\neq p'$, by 
 $$
 \mathcal{N}(p,p')=\p_\nu\p_{\nu'}G(x,x')|_{x=p,\ x'=p'},
 $$
 where $\p_\nu$ and $\p_{\nu'}$ are respectively the inward pointing normal vector fields to the boundary in variable $x$ and $x'$.
\end{Lemma}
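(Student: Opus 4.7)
The strategy is the classical Green's function representation of the Dirichlet-to-Neumann map for a formally self-adjoint elliptic operator. Since $L_g = -\Delta_g + \tfrac{n-2}{4(n-1)} S_g$ is a real second order operator of the form $-\Delta_g + q_g$, it is formally self-adjoint with respect to $dV_g$, and Green's second identity reads
$$
\int_M (u\, L_g v - v\, L_g u)\, dV_g = -\int_{\partial M} \bigl(u\, \partial_{\hat n} v - v\, \partial_{\hat n} u\bigr)\, dS_g,
$$
where $\hat n = -\nu$ denotes the outward unit normal. Fix an interior point $y$ and $f \in C^\infty(\partial M)$. Let $u$ solve $L_g u = 0$ in $M$ with $u|_{\partial M} = f$, and set $v(\cdot) = G(y,\cdot)$, so $L_g v = \delta_y$ and $v|_{\partial M} = 0$. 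The left hand side collapses to $u(y)$, and only the $v$-differentiated boundary term survives on the right. Rewriting $\partial_{\hat n} = -\partial_\nu$, this yields the Poisson-type representation
$$
u(y) = \int_{\partial M} f(p')\, \partial_{\nu'} G(y,p')\, dS_g(p').
$$

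To extract the Schwartz kernel, I apply $\partial_\nu$ in the $y$-variable and let $y \to p \in \partial M$. For $p \neq p'$ choose disjoint neighborhoods $V_p, V_{p'} \subset \partial M$ and consider test functions $f$ supported in $V_{p'}$. Since the Dirichlet Green's function $G(x,x')$ is smooth off the diagonal $\{x = x'\}$ up to the boundary (standard elliptic regularity, cf.\ the parametrix construction for the Dirichlet problem), the map $y \mapsto \partial_{\nu'} G(y,p')$ extends smoothly to $y \in V_p$ uniformly in $p' \in \mathrm{supp}(f)$. Hence differentiation under the integral and the boundary limit are justified, giving
$$
N_g f(p) = \int_{\partial M} f(p')\, \partial_\nu \partial_{\nu'} G(x,x')\big|_{x=p,\, x'=p'}\, dS_g(p')
$$
for every $f \in C^\infty_c(V_{p'})$. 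By the Schwartz kernel theorem this identifies $\mathcal N(p,p')$ with $\partial_\nu \partial_{\nu'} G(x,x')|_{x=p, x'=p'}$ on the open set $\{(p,p') \in \partial M \times \partial M : p \neq p'\}$, as claimed.

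The main obstacle is the off-diagonal smoothness of $G$ up to the boundary, which underwrites both the passage of $\partial_\nu$ inside the integral and the boundary trace $y \to p$. This is a routine consequence of boundary elliptic regularity once the support of $f$ is separated from $p$; the representation itself (Poisson formula) and the self-adjointness are immediate, so no further new ingredients are required.
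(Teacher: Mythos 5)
Your proof is correct and is exactly the argument the paper has in mind: the paper omits the proof and cites \cite{GuSaBar} for the Laplace--Beltrami case, and that reference uses precisely this Green's-identity/Poisson-representation computation, adapted here mutatis mutandis to $L_g = -\Delta_g + q_g$ with $q_g = \frac{n-2}{4(n-1)}S_g$. The sign bookkeeping with the inward normal $\nu = -\hat n$ checks out, and the justification for differentiating under the integral and taking the boundary trace (off-diagonal smoothness of the Dirichlet Green's function up to the boundary) is the right and sufficient ingredient; one could be more pedantic and derive the representation formula by integrating Green's identity over $M\setminus B_\eps(y)$ and letting $\eps\to 0$ to handle the singularity of $G(y,\cdot)$, but that is standard and does not affect the substance.
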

We omit the proof since it is identical to the proof of the same result for the Dirichlet-to-Neumann map of the Laplace-Beltrami operator~\cite{GuSaBar}.

\begin{Proposition}\label{Gs_agree}
 Let $(M_i,g_i)$, $i=1,2$, be two locally conformally real analytic manifolds, with a common boundary portion $\Gamma\subset \p M$, and whose Dirichlet-to-Neumann maps $N_{g_i}$ agree on $\Gamma$: for any $f \in C^{\infty}_c(\Gamma)$ we have
 $$
 N_{g_1}f|_{\Gamma}=N_{g_2}f|_{\Gamma}. 
 $$ 
 
Then for any $p \in \Gamma$ there is an open neighborhood of $U\subset M_1$ of $p$ such that the Green's functions $G_i(x,y)$ of $L_{g_i}$ satisfy
 $$
 G_1(x,y)=\frac{1}{c(x)^{\frac{n-2}{4}}c(y)^{\frac{n-2}{4}}}G_2(F(x),F(y)), \ (x,y)\in U\times U\setminus\{x = x'\}.
 $$
 Here $F$ is a local diffeomorphism and $c$ is a positive smooth function as in Proposition~\ref{det_near_bndr}.
 
 If $\Gamma=\p M$, we can take $U$ to be a neighborhood of the whole boundary.
\end{Proposition}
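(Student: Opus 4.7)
The plan is to construct a candidate Green's function $\tilde G$ by transporting $G_2$ through the maps supplied by Proposition~\ref{det_near_bndr} (or Proposition~\ref{near_whole_bndr} when $\Gamma = \p M$), show that its difference with $G_1$ extends smoothly across the diagonal and has vanishing Cauchy data on $\Gamma$, and then conclude by unique continuation. With $F$ and $c$ as in those propositions, I would set
$$
\tilde G(x,y) := c(x)^{-(n-2)/4}c(y)^{-(n-2)/4}\,G_2(F(x),F(y)), \qquad (x,y)\in U\times U,\ x\neq y.
$$
First I would verify that $L_{g_1,x}\tilde G(\,\cdot\,, y) = \delta_y$ distributionally on $U$. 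This follows by combining the conformal scaling $L_{cg}u = c^{-(n+2)/4}L_g(c^{(n-2)/4}u)$ (applied to $g = F^* g_2$), the diffeomorphism invariance $L_{F^*g_2}\varphi = F^*L_{g_2}((F^{-1})^*\varphi)$, and the transformation rule for the Dirac mass under $F$ and the conformal change of volume. In particular, $\tilde G$ has the same diagonal singularity as $G_1$, so $u := G_1 - \tilde G$ extends smoothly across the diagonal in $U\times U$; it satisfies $L_{g_1,x}u(\,\cdot\,,y)=0$ for each $y\in U$ and, by symmetry of the Green's functions, $L_{g_1,y}u(x,\,\cdot\,)=0$ for each $x\in U$.

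Next I would check the boundary data. Because $c = 1$ and $F = \id$ on $\Gamma$, both $G_1$ and $\tilde G$ vanish whenever $x$ or $y$ lies in $\Gamma$, so $u$ does too. For the first normal derivatives I would use that $F_*\p_{\nu_1} = \p_{\nu_2}$ on $\Gamma$ (established inside the proof of Proposition~\ref{F_indep}) together with $\p_\nu c|_\Gamma = 0$; these give $\p_{\nu,x}\p_{\nu',y}\tilde G(x,y) = \p_\nu\p_{\nu'}G_2(x,y)$ for $x,y\in\Gamma$ with $x\neq y$. Combining this with Lemma~\ref{Schwartz_kernel} and the hypothesis $N_{g_1}|_\Gamma = N_{g_2}|_\Gamma$ yields $\p_{\nu,x}\p_{\nu',y}u(x,y) = 0$ on $\Gamma\times\Gamma$ off the diagonal.

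To finish, I would apply unique continuation twice. Fix $x_0\in\Gamma$ and set $w(y) := \p_{\nu,x}u(x,y)|_{x = x_0}$. Since $u(x_0,y) = 0$ for $y\in\Gamma$ we have $w|_\Gamma = 0$, and $\p_\nu w|_\Gamma = 0$ by the previous display; as $w$ solves $L_{g_1}w=0$ on $U$, unique continuation from Cauchy data on $\Gamma$ gives $w\equiv 0$ on $U$. Letting $x_0$ vary over $\Gamma$, this shows $\p_{\nu,x}u(x,y) = 0$ for every $x\in\Gamma$, $y\in U$. For fixed $y\in U$, the function $x\mapsto u(x,y)$ now solves $L_{g_1}v = 0$ on $U$ with zero Cauchy data on $\Gamma$; a second application of unique continuation gives $u\equiv 0$ on $U\times U$, which is the desired identity. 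The case $\Gamma=\p M$ is handled by taking $U$ to be the global one-sided neighborhood from Proposition~\ref{near_whole_bndr}. The main obstacle I anticipate is the first step: carefully tracking the Dirac mass under the simultaneous diffeomorphism and conformal scaling so that the singularities of $G_1$ and $\tilde G$ truly cancel. The rest of the argument then mirrors Corollary~3.5 of~\cite{GuSaBar}.
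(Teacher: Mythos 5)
Your proof is correct and follows essentially the same strategy as the paper's: the same reduction via the gauge conditions $F|_\Gamma=\mathrm{Id}$, $c|_\Gamma=1$, $\p_\nu c|_\Gamma=0$, $F_*\p_{\nu_1}=\p_{\nu_2}$, the same use of the Schwartz kernel lemma~\ref{Schwartz_kernel} to match the mixed second normal derivatives, and the same two rounds of unique continuation (first with one argument frozen on $\Gamma$, then with one argument frozen in the interior). The only organizational difference is that you work with the difference $u=G_1-\tilde G$ and its smooth extension across the diagonal, whereas the paper compares the two candidates ($T_1,T_2$ and then $S_1,S_2$) directly while working away from the singular point; the Dirac-mass bookkeeping you flag as the main obstacle does close --- the exponents combine as $-\tfrac{n+2}{4}-\tfrac{n-2}{4}+\tfrac{n}{2}=0$ --- and is carried out by the paper only later, in the proof of Lemma~\ref{local_equiv_pf}.
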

\begin{proof}
By Proposition~\ref{det_near_bndr} we have that for any $p\in \Gamma$ there is an open neighborhood $U\subset M_1$ of $p$ such that
$$
g_1=c F^*g_2 \mbox{ on } U,
$$
where $F:U\to F(U)$ is diffeomorphism of the form
$$
F=\psi_2^{-1}\circ Z_2^{-1}\circ Z_1\circ \psi_1
$$
and $c$ is a conformal scaling satisfying
\begin{equation}\label{boundary_rel_for_c}
c|_\Gamma =1, \ \p_{\nu_1} c|_\Gamma =0.
\end{equation}

We prove the claim by using $\psi_1$ coordinates on $U$ and $\psi_2$ coordinates on $F(U)$. Since the jets of $Z_1$ and $Z_2$ agree on $\Gamma$ by Proposition~\ref{Zpreservjet}, we have that the differential of $F$ in these coordinates on $\Gamma$ is just the identity matrix. Let us denote $\tilde{c}=c^{-\frac{n-2}{4}}$.

We first show that
\begin{equation}\label{first_identity}
\p_{\nu'}\left(\tilde{c}(x)\tilde{c}(x')G_2(F(x),F(x'))\right)=\p_{\nu'}G_1(x,x'),
\end{equation}
for $x\in U$ and $x'\in\Gamma$, $x\neq x'$. For this, let $x'\in \Gamma$. We denote
$$
T_1(x)=\p_{\nu'}G_1(x,x')
$$
and
$$
T_2(x)=\p_{\nu'}\left(\tilde{c}(x)\tilde{c}(x')G_2(F(x),F(x'))\right).
$$
We have by the diffeomorphism and conformal invariance of the conformal Laplacian that
\begin{align*}
 &L_{cF^*g_2}T_2 \\
 &=c^{-\frac{n+2}{4}} F^*L_{g_2}\left[(\tilde{c}|_{F^{-1}(\cdot)})^{-1}\p_{\nu'}\left(\tilde{c}|_{F^{-1}(\cdot)}\tilde{c}(x')G_2(\cdot,F(x'))\right)\right] \\
  &=c^{-\frac{n+2}{4}}\left(F^*\p_{\nu'}\left(\tilde{c}(x')L_{g_2}G_2(\cdot,x')\right)\right).
\end{align*}
Here the conformal Laplace operators are understood to operate on the variable $x$, which is omitted or marked as $(\cdot)$ in the equations, and not in the $x'$ variable.
For $U\ni x\neq x'$ the above equals zero. On the other hand, we have that the left hand side of the above equation equals $L_{g_1}T_2$ for $x\in U$ and thus $T_2$ solves
$$
L_{g_1}T_2=0, \mbox{ in } U\setminus\{x'\}.
$$
We also have, 
$$
L_{g_1}T_1=0, \mbox{ in } U\setminus\{x'\}.
$$

Let us then show that the Cauchy data of $T_1$ and $T_2$ agree on $\Gamma$. After this, the equation~\eqref{first_identity} follows from unique continuation~\cite{Isakov_book}. We have
\begin{align*}
 T_1|_{\Gamma\setminus\{x'\}} &=\p_{\nu'}G_1(x,x')|_{x\in \Gamma\setminus\{x'\}}=0 \\
 T_2|_{\Gamma\setminus\{x'\}} &=\p_{\nu'}\left(\tilde{c}(x)\tilde{c}(x')G_2(F(x),F(x'))\right)|_{x\in \Gamma\setminus\{x'\}}=0.
\end{align*}
The latter equation holds since $F$ is a local diffeomorphism that preserves $\Gamma$.

Since $x'\in \Gamma$, and $x\neq x'$, we have
\begin{align*}
 \p_{\nu}T_1|_{\Gamma\setminus\{x'\}}&=\p_{\nu}\p_{\nu'}G_1(x,x')|_{x\in \Gamma\setminus\{x'\}}=\mathcal{N}_1(x,x')
 \end{align*}
 by the previous lemma. Similarly, we have
 \begin{align*}
 \p_{\nu}T_2|_{\Gamma\setminus\{x'\}}&=\p_{\nu}\p_{\nu'}\left(\tilde{c}(x)\tilde{c}(x')G_2(F(x),F(x'))\right)|_{x\in \Gamma\setminus\{x'\}} \\
 &=\p_{\nu}\p_{\nu'}(G_2(F(x),F(x'))=\p_{\nu_2}\p_{\nu_2'}G_2(x,x')=\mathcal{N}_2(x,x').
\end{align*}
Some explanations are in order. In the second equality we have used that the function $\tilde{c}$ satisfies the boundary conditions~\eqref{boundary_rel_for_c}. In the second to last equality we have used the facts that the differential of $F$ is the identity matrix on the boundary in $\psi_1$ and $\psi_2$ coordinates, and that $F$ preserves $\Gamma$. We have also distinguished the boundary normal vector fields of $M_2$ by using notations $\p_{\nu_2}$ and $\p_{\nu'_2}$.

By the assumption, the Dirichlet-to-Neumann maps agree on $\Gamma$, thus their Schwartz kernels agree on $\Gamma$, and it follows that
$$
\p_{\nu}T_1|_{\Gamma\setminus\{x'\}}=\p_{\nu}T_2|_{\Gamma\setminus\{x'\}}.
$$
We have now seen that $T_1$ and $T_2$ solve the same elliptic equation and they have the same Cauchy data. It follows by unique continuation that $T_1(x)=T_2(x)$ on an open neighborhood of $\Gamma$.

We conclude the proof by using the above to show that also $G_1(x,x')$ and $\tilde{c}(x)\tilde{c}(x')G_2(F(x),F(x'))$ have the same Cauchy data for fixed $x'\in U$. This is sufficient by unique continuation: We have for $x\in U$, $x\neq x'$, that
\begin{align*}
L_{g_1}G_1(\cdot,x')&=0
\end{align*}
and
 \begin{align*}
 L_{g_1}(\tilde{c}(\cdot)\tilde{c}(x')G_2(F(\cdot),F(x')))&=\tilde{c}(x')L_{cF^*g_2}(\tilde{c}(\cdot)G_2(F(\cdot),F(x')))=0,
\end{align*}
and thus the functions in question satisfy the same elliptic equation.
Set
$$
S_1(x)=G_1(x',x) \mbox { and } S_2(x)=\tilde{c}(x')\tilde{c}(x)G_2(F(x'),F(x)).
$$
(Recall that the point $x'\in U$ is now fixed.)
We have $S_1|_{\Gamma\setminus\{x'\}}=S_2|_{\Gamma\setminus\{x'\}}=0$. By what we have proved above, we have
\begin{align*}
\p_{\nu}S_1|_{\Gamma\setminus\{x'\}}&=\p_{\nu}G_1(x',x)|_{x\in\Gamma\setminus\{x'\}}=T_1(x') \\
&=T_2(x')=\p_{\nu}\left(\tilde{c}(x')\tilde{c}(x)G_2(F(x'),F(x))\right)=\p_\nu S_2|_{\Gamma\setminus\{x'\}}
\end{align*}
Thus the Cauchy data on $\Gamma$ for $S_1$ and $S_2$ is the same. This concludes the proof.

By Proposition~\ref{F_indep} together with the proof of Proposition~\ref{near_whole_bndr} we can take $U$ to be a neighborhood of whole $\p M$.
\end{proof}

\section{Proof of the main result}
We prove our main theorem, Theorem~\ref{mainthm}. We define scaled Green's functions as follows, 
$$
H_i(x,y)=\frac{G_i(x,y)}{P_i(x)P_i(y)},
$$
for $x,y\in \Int(M_i)$, and where the functions $P_i:\Int(M_i)\to \R$, $i=1,2$, are defined as
$$
P_i(x)=\left(\int_{\p M} \left(\p_{\nu_i'}G_i(x,z')\right)^2dS(z')\right)^{1/2}.
$$
This function is nonvanishing for $x\in \Int(M_i)$. This is because if we had a point $\hat{x}\in \Int(M_i)$ such that $P_i(\hat{x})=0$, we would have that 
$$
\p_{\nu_i'}G_i(\hat{x},z')=0 \mbox{ for } z'\in \p M.
$$
Since also $G_i(\hat{x},z')=0$ for $z'\in \p M$ and $L_{g_i}G_i(\hat{x},\cdot)=0$ it would follow from elliptic unique continuation that $G_i(\hat{x},z')$ is identically zero in $M_i\setminus \{\hat{x}\}$ (recall that $M_i$ is assumed to be connected). But this would contradict the behavior of $G_i$ when $\hat{x}$ and $z'$ are close to each other. 

We have the following basic lemma, which shows that the functions $H_i(x,y)$ are real-analytic outside the diagonal in suitable coordinates. We remove the subscript $i$ from our notation for the moment.

\begin{Lemma}
Let $(M,g)$ be a locally conformally real analytic manifold with boundary. Fix points $p, p' \in \Int(M)$ with $p \neq p'$, and let $\phi$ and $\phi'$ be coordinates in some neighborhoods $U$ and $U'$ of $p$ and $p'$ so that 
\[
\phi^{-1*}g=sh \text{ and } \phi'^{-1*}g=s'h' \text{ with }h, h' \in C^{\omega}.
\]
Then the function 
\[
H(\phi^{-1}(x), \phi'^{-1}(y))
\]
is jointly real-analytic in $(\phi(U) \times \phi'(U')) \setminus \{ (x,y) \,;\, \phi^{-1}(x) = \phi'^{-1}(y) \}$.
\end{Lemma}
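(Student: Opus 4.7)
The plan is to exploit the conformal covariance
\[
L_{sh} v = s^{-\frac{n+2}{4}} L_h\bigl(s^{\frac{n-2}{4}} v\bigr)
\]
(and its $h'$-analog) in order to reduce the joint real-analyticity of $H$ to the Morrey--Nirenberg analyticity theorem for the real-analytic elliptic operators $L_h$ and $L_{h'}$.

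First I would set $a(x) = s(\phi^{-1}(x))^{(n-2)/4}$ and $a'(y) = s'(\phi'^{-1}(y))^{(n-2)/4}$, and define
\[
\tilde{G}(x,y) = a(x)\, a'(y)\, G(\phi^{-1}(x),\,\phi'^{-1}(y)).
\]
Since $L_g G(\cdot,y) = 0$ and, by symmetry of $G$, $L_g G(x,\cdot) = 0$ off the diagonal, the conformal transformation law implies
\[
L_h^x \tilde{G}(x,y) = 0, \qquad L_{h'}^y \tilde{G}(x,y) = 0
\]
on $(\phi(U) \times \phi'(U')) \setminus \{\phi^{-1}(x) = \phi'^{-1}(y)\}$. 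The operator $L_h^x + L_{h'}^y$ is elliptic on $\R^{2n}$ with real-analytic coefficients (its principal symbol at $(x,y;\xi,\eta)$ is $|\xi|_{h(x)}^2 + |\eta|_{h'(y)}^2$, which vanishes only at $(\xi,\eta)=0$), so Morrey--Nirenberg gives joint real-analyticity of $\tilde{G}$ off the diagonal.

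Next I would show that $a(x)\, P(\phi^{-1}(x))$ is real-analytic and positive on $\phi(U)$, and similarly for $a'(y)\, P(\phi'^{-1}(y))$. For each fixed $z' \in \p M$ (which lies outside $U$), the conformal covariance shows that $x \mapsto a(x)\, G(\phi^{-1}(x),z')$ satisfies $L_h(\cdot) = 0$ in $\phi(U)$, and since $\p_{\nu'}$ in $z'$ commutes with $L_h^x$, the same holds for $a(x)\, \p_{\nu'} G(\phi^{-1}(x),z')$. The quantitative analytic a priori estimates for $L_h$ then yield a holomorphic extension in $x$ to a fixed complex neighborhood of any $x_0 \in \phi(U)$, with bounds uniform in $z' \in \p M$ thanks to joint smoothness of $\p_{\nu'} G$ on (a compact set in $U$)$\times \p M$. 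Integration over $z'$ preserves holomorphy, so
\[
a(x)\, P(\phi^{-1}(x)) = \Bigl(\int_{\p M} \bigl(a(x)\, \p_{\nu'} G(\phi^{-1}(x),z')\bigr)^2\, dS(z')\Bigr)^{1/2}
\]
is real-analytic, and it is positive since $P$ is.

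Combining the two steps,
\[
H(\phi^{-1}(x),\phi'^{-1}(y)) = \frac{\tilde{G}(x,y)}{\bigl(a(x)\, P(\phi^{-1}(x))\bigr)\,\bigl(a'(y)\, P(\phi'^{-1}(y))\bigr)}
\]
is a ratio of jointly real-analytic functions with nonvanishing denominator, hence jointly real-analytic off the diagonal, as claimed. The main technical point I expect to require care is the uniformity in $z' \in \p M$ needed to push real-analyticity through the integral defining $P$; this rests on the quantitative form of interior analytic regularity for $L_h$, in which the radius of the holomorphic extension depends only on the operator's real-analytic coefficients and on a uniform $L^\infty$ bound for the solution, not on the individual boundary parameter $z'$.
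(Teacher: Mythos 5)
Your proposal is correct and the first half coincides with the paper's proof: factor out $s^{(n-2)/4}$ so that the rescaled Green's function is annihilated by the real-analytic elliptic operator $L_h^x + L_{h'}^y$, then invoke Morrey--Nirenberg. Where you diverge slightly is in the argument for real-analyticity of $a(x)\,P(\phi^{-1}(x))$. The paper establishes that the rescaled kernel $K(x,y)$ is jointly real-analytic \emph{up to the boundary in the $y$-variable} (using Morrey--Nirenberg boundary regularity together with the hypothesis that $s'|_{y^n=0}\in C^\omega$ in the boundary chart and the fact that $K$ has the real-analytic boundary value $0$), then writes a power series for $\partial_{\nu_q}(\tilde{s}(x)G(x,q))$ in $(x-x_0)$ with $q$-dependent coefficients, and obtains a uniform radius of convergence by covering $\partial M$ by finitely many boundary charts. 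You instead argue entirely in the interior: for each fixed $z'\in\partial M$, $x\mapsto a(x)\,\partial_{\nu'}G(\phi^{-1}(x),z')$ solves $L_h u =0$ in $\phi(U)$, and quantitative interior analytic estimates for $L_h$ (holomorphic extension to a complex neighborhood whose size depends only on the coefficients and on a uniform $L^\infty$ bound) give uniformity in $z'$ via joint $C^\infty$-smoothness of $\partial_{\nu'}G$ on (compact $\subset\Int M$)$\times\partial M$. Your route is somewhat more economical because it never appeals to the boundary part of the locally-conformally-real-analytic hypothesis, but it does lean on the quantitative form of interior analytic a priori estimates, which the paper avoids by working with explicit power series obtained from the boundary Morrey--Nirenberg theorem. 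Both routes are valid; the paper's is self-contained given that it has already set up the boundary analytic framework, while yours is a cleaner reduction to interior regularity.
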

\begin{proof}
We need to show that both $\tilde{s}(x) \tilde{s}(y) G(x,y)$ and $\tilde{s}(x) P(x)$ are (jointly) real-analytic in suitable coordinates. Then $H(x,y)$ will also be jointly real-analytic as the quotient of real-analytic functions (recall that $\tilde{s}(x) P(x)$ is nonvanishing in $\Int(M)$).

Let first $(p_0,q_0)\in \Int(M)\times \p M$ and let $\phi$ and $\phi'$ be a coordinate chart and a boundary chart near $p_0$ and $q_0$ respectively where the metric is of the form
 $$
 \phi^{-1*}g=sh \text{ and } \phi'^{-1*}g=s'h' \text{ with }h, h' \in C^{\omega}.
 $$
 
 Let us denote
 $$
 K(p,q)=\tilde{s}(p)\tilde{s}(q)G(p,q)
 $$
 and denote by $K(x,y)$ the coordinate representation of $K(p,q)$ in coordinates $(\phi,\phi')$:
 $$
 K(x,y)=K(\phi^{-1}(x),\phi'^{-1}(y)).
 $$
 
Now the function $K(x,y)$, whose domain is a subset of $\R^n\times \mathbb{H}^n$, satisfies an elliptic equation in an open subset of $\R^{2n}$
 $$
 (L_{h}^x+L_{h'}^y)K(x,y)=0 \text{ for } x\neq y,
 $$
with real analytic coefficients and with real analytic boundary values $K(x,y)|_{\{y^n=0\}}=0$. Thus by $C^\omega$ elliptic regularity~\cite{MoNi} $K(x,y)$ is jointly real-analytic up to the boundary in an open subset of $\R^{2n}$. The same argument proves that $K(x,y)$ is jointly real-analytic also for points in $\Int(M) \times \Int(M)$, showing the required statement for $\tilde{s}(x) \tilde{s}(y) G(x,y)$.

Near $(x_0,y_0):=(\phi(p_0),\phi'(q_0))$ we can express the function $K(x,y)$ as a convergent power series
 $$
  K(x,y)=\sum_{\alpha, \beta} \frac{\p_x^{\alpha} \p_y^{\beta} K(x_0,y_0)}{\alpha! \beta!} (x-x_0)^{\alpha} (y-y_0)^{\beta}.
$$
for $(x,y)\in B(x_0,R)\times B(y_0,R)$ (i.e.\ $\sum_{\alpha, \beta} \frac{\abs{\p_x^{\alpha} \p_y^{\beta} K(x_0,y_0)}}{\alpha! \beta!} R^{\abs{\alpha}+\abs{\beta}} < \infty$) for some $R > 0$. Since $\p^{\alpha}_x K(x_0,y) = \sum_{\beta} \frac{\p_x^{\alpha} \p_y^{\beta} K(x_0,y_0)}{\beta!} (y-y_0)^{\beta}$, we can write the above in the form  
 \begin{equation*}\label{T-exp_exp}
 K(x,y)=\sum_{\alpha} \frac{\p_x^{\alpha} K(x_0,y)}{\alpha!} (x-x_0)^{\alpha}
 \end{equation*}
which holds for $(x,y) \in B(x_0,R) \times B(y_0,R)$, and we also obtain  
 \begin{equation*}\label{T-exp_exp2}
\partial_{\nu_y} K(x,y)=\sum_{\alpha} \frac{\partial_{\nu_y}\p_x^{\alpha} K(x_0,y)}{\alpha!} (x-x_0)^{\alpha}
 \end{equation*}
 for $(x,y) \in B(x_0,R) \times (B(y_0,R) \cap \{y^n=0\})$.

Using these facts, we have
 \begin{align*}
 \p_{\nu_y}(\tilde{s}(x)G(x,y)) &=\p_{\nu_y}\left(\frac{1}{\tilde{s}(y)}K(x,y)\right) \\
 &= -\tilde{s}(y)^{-2} (\partial_{\nu_y} \tilde{s}(y)) K(x,y) + \tilde{s}(y)^{-1} \partial_{\nu_y} K(x,y) \\
 &=\sum_{\alpha} \frac{1}{\alpha!}\p_x^{\alpha} \left(\p_{\nu_y}\frac{K(x_0,y)}{\tilde{s}(y)}\right) (x-x_0)^{\alpha}
 \end{align*}
 which continues to hold for $(x,y)\in B(x_0,R)\times (B(y_0,R) \cap \{y^n=0\})$. Finally, composing the above with $\phi'$ in the $y$ variable, we have
 $$
 \p_{\nu_q} (\tilde{s}(x)G(x,q)) = \sum_{\alpha} \frac{1}{\alpha!}\p_x^{\alpha} \left(\p_{\nu_q}\frac{K(x_0,q)}{\tilde{s}(q)}\right) (x-x_0)^{\alpha}
 $$
holding for $x\in B(x_0,R)$ and $q \in U_1 \cap \p M$ where $U_1$ is an open neighborhood of the boundary point $q_0$.
 
 
By using the compactness of $\p M$, we can cover the boundary with finitely many boundary charts $U_j$ so that the above holds in each $u_j$ with $R=R_j>0$. Since power expansions are unique, we have that
 $$
\p_{\nu_q} (\tilde{s}(x)G(x,q)) = \sum_{\alpha} \frac{1}{\alpha!}\p_x^{\alpha} \left(\p_{\nu_q}\frac{K(x_0,q)}{\tilde{s}(q)}\right) (x-x_0)^{\alpha}
 $$
holds for $x\in B(x_0,R)$ and $q \in \p M$ where $R=\min R_j > 0$. It also follows that the power series 
 $$
 (\p_{\nu_q} (\tilde{s}(x)G(x,q)) )^2 = \sum_{\alpha} a_{\alpha}(q) (x-x_0)^{\alpha}
 $$
has a positive radius of convergence, again denoted by $R$, independent of $q\in \p M$ (that is, $\sum_{\alpha} \norm{a_{\alpha}}_{L^{\infty}(\partial M)} R^{\abs{\alpha}} < \infty$). We can thus integrate the power series over $\p M$ and obtain that 
 \begin{align*}
(\tilde{s}(x) P(x))^2 &=\int_{\p M} (\p_{\nu_q}\tilde{s}(x)G(x,q))^2
 \end{align*}
has a convergent power series near $x_0$. Since $P$ is positive, the square root $\tilde{s}(x) P(x)$ is real analytic near $x_0$ as required.
\end{proof}

We simplify our presentation by giving the interiors of the locally conformally real analytic manifolds $(M_i,g_i)$ real analytic $C^\omega$-structures. The existence of such a structure is proven in the appendix by using $n$-harmonic coordinates. This allow us to speak about real analyticity without constantly specifying the coordinates. The transition function from the coordinates where the metric is conformally real analytic to $n$-harmonic coordinates is real analytic (see proof of Proposition~\ref{Comeg_str} in the appendix). Thus we have:
\begin{Corollary}
  Assume that Riemannian manifolds $(M_i,g_i)$, $i=1,2$, are locally conformally real analytic. Then the functions $H_i(z_i,z_i')$, $i=1,2$, are real analytic for $z_i,z_i'\in \Int(M_i)$, $z_i\neq z_i'$. 
\end{Corollary}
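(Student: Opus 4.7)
The plan is to reduce the corollary directly to the preceding lemma by passing from coordinate charts in which the metric is conformally real analytic to the real analytic atlas on $\Int(M_i)$ supplied by Proposition~\ref{Comeg_str}, and then use that transitions between the two are real analytic.

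Fix any $(p,p') \in \Int(M_i) \times \Int(M_i)$ with $p \neq p'$. Since the metric $g_i$ is locally conformally real analytic, we may pick coordinate charts $\phi \colon U \to \R^n$ around $p$ and $\phi' \colon U' \to \R^n$ around $p'$, with $U \cap U' = \emptyset$, such that
\[
\phi^{-1*} g_i = s\, h, \qquad \phi'^{-1*} g_i = s'\, h', \qquad h,\, h' \in C^\omega.
\]
By the preceding lemma, the coordinate representation $(x,y) \mapsto H_i(\phi^{-1}(x), \phi'^{-1}(y))$ is jointly real analytic on $\phi(U) \times \phi'(U')$; the diagonal exclusion is vacuous here since $U$ and $U'$ are disjoint.

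Next, let $\Psi, \Psi'$ be $n$-harmonic coordinate charts around $p$ and $p'$, drawn from the $C^\omega$-atlas on $\Int(M_i)$ produced in Proposition~\ref{Comeg_str}. The key input is that (as established in the proof of Proposition~\ref{Comeg_str}) the transition maps $\phi \circ \Psi^{-1}$ and $\phi' \circ \Psi'^{-1}$ between the conformally real analytic charts and the $n$-harmonic charts are themselves real analytic diffeomorphisms between open subsets of $\R^n$. Consequently, the representation
\[
(\xi,\eta) \longmapsto H_i\bigl(\Psi^{-1}(\xi), \Psi'^{-1}(\eta)\bigr)
 = H_i\bigl(\phi^{-1}(\phi \circ \Psi^{-1}(\xi)),\, \phi'^{-1}(\phi' \circ \Psi'^{-1}(\eta))\bigr)
\]
is a composition of real analytic maps and hence real analytic near $(\Psi(p), \Psi'(p'))$.

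Since the point $(p,p')$ off the diagonal was arbitrary, this exhibits $H_i$ as real analytic in the $C^\omega$-structure on $\Int(M_i) \times \Int(M_i) \setminus \Delta$. The only nonroutine ingredient is the $C^\omega$-compatibility of the conformally real analytic charts with the $n$-harmonic charts used to define the real analytic structure; this is exactly what Proposition~\ref{Comeg_str} and its proof provide, so once that proposition is invoked the corollary is immediate from the previous lemma.
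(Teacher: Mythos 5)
Your proof is correct and takes essentially the same route as the paper: the paper's argument, given in the prose immediately before the corollary, equips $\Int(M_i)$ with the $C^\omega$-structure from Proposition~\ref{Comeg_str} via $n$-harmonic coordinates, notes that the transition from the conformally real analytic charts to this atlas is real analytic, and then invokes the preceding lemma. Your write-up simply makes explicit the choice of charts, the disjointness of neighborhoods (harmless since $p\neq p'$, and in any case the lemma itself already excludes the diagonal), and the composition argument.
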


We will define embeddings of the interiors of $M_i$ into a Sobolev space (of negative index) to prove our main theorem, Theorem~\ref{mainthm}. Before going there, we record the following.
\begin{Proposition}\label{G_to_H}
If
\begin{equation}\label{G_rel}
 G_1(x,y)=\frac{1}{c(x)^{\frac{n-2}{4}}c(y)^{\frac{n-2}{4}}}G_2(F(x),F(y)), \ (x,y)\in U\times U\setminus\{x = y\}.
\end{equation}
 on an open set $U$ containing $\p M$, where $c$ and $F$ are the functions found in Proposition~\ref{near_whole_bndr}, then
 $$
 H_1(x,y)=H_2(F(x),F(y)),\ (x,y)\in \Int(U)\times \Int(U)\setminus\{x = y\}.
 $$
 In addition, we have that
 $$
 P_1(x)=c(x)^{-\frac{n-2}{4}}P_2(F(x)).
 $$
\end{Proposition}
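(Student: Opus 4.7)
\medskip

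\noindent\textbf{Proof proposal.} My plan is to first compute $P_1(x)$ in terms of $P_2(F(x))$ by specializing the identity \eqref{G_rel} to the case where one argument is on the boundary, then deduce the identity for $H_1$ as a quotient. The key gauge conditions $F|_{\p M}=\mathrm{Id}$, $c|_{\p M}=1$, $\p_\nu c|_{\p M}=0$ from Proposition~\ref{near_whole_bndr}, together with the fact from Proposition~\ref{Zpreservjet} that the jets of $Z_1$ and $Z_2$ agree on $\{x^n=0\}$ (so that the differential of $F$ in the matching $\psi_i$-coordinates is the identity on $\p M$), will be used repeatedly to simplify boundary evaluations.

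First I would fix $z'\in\p M$ and specialize \eqref{G_rel} at $y=z'$: since $c(z')=1$ and $F(z')=z'$, this gives
\[
G_1(x,z')=c(x)^{-\frac{n-2}{4}}G_2(F(x),z'),\qquad x\in U.
\]
To compute $\p_{\nu_1'}G_1(x,z')$, I differentiate the right-hand side of \eqref{G_rel} in the $z'$-variable along $\nu_1$ and restrict to $z'\in\p M$. The product rule produces a term $\p_{\nu_1'}(c(z')^{-\frac{n-2}{4}})$, which vanishes because $\p_\nu c|_{\p M}=0$, so only
\[
\p_{\nu_1'}\!\bigl[c(z')^{-\frac{n-2}{4}}G_2(F(x),F(z'))\bigr]\Big|_{z'\in\p M}=c(x)^{-\frac{n-2}{4}}\,\p_{\nu_1'}G_2(F(x),F(z'))\big|_{z'\in\p M}
\]
survives. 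Next, because $g_1=cF^*g_2$ and $c|_{\p M}=1$, $F|_{\p M}=\mathrm{Id}$, the induced metrics and inward unit normals of $g_1$ and $g_2$ on $\p M$ coincide, and $dF$ maps $\nu_1$ to $\nu_2$ at points of $\p M$. Hence
\[
\p_{\nu_1'}G_2(F(x),F(z'))\big|_{z'\in\p M}=\p_{\nu_2'}G_2(F(x),z'),
\]
yielding the clean identity
\[
\p_{\nu_1'}G_1(x,z')=c(x)^{-\frac{n-2}{4}}\,\p_{\nu_2'}G_2(F(x),z'),\qquad z'\in\p M.
\]

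Squaring and integrating over $\p M$ against the $g_1$-surface measure $dS_1$, and using that $g_1|_{\p M}=g_2|_{\p M}$ implies $dS_1=dS_2$, I obtain
\[
P_1(x)^2=c(x)^{-\frac{n-2}{2}}\int_{\p M}\bigl(\p_{\nu_2'}G_2(F(x),z')\bigr)^2\,dS_2(z')=c(x)^{-\frac{n-2}{2}}P_2(F(x))^2.
\]
Taking positive square roots (both $P_i$ are positive in the interior by the nonvanishing argument recalled before the proposition) gives $P_1(x)=c(x)^{-\frac{n-2}{4}}P_2(F(x))$, which is the second claimed identity.

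Finally, substituting into the definition of $H_1$ and using \eqref{G_rel} in the interior of $U$:
\[
H_1(x,y)=\frac{G_1(x,y)}{P_1(x)P_1(y)}=\frac{c(x)^{-\frac{n-2}{4}}c(y)^{-\frac{n-2}{4}}G_2(F(x),F(y))}{c(x)^{-\frac{n-2}{4}}P_2(F(x))\cdot c(y)^{-\frac{n-2}{4}}P_2(F(y))}=H_2(F(x),F(y)),
\]
for $(x,y)\in\Int(U)\times\Int(U)$ with $x\neq y$. The only nontrivial step is the one where the product rule is applied to $c(z')^{-\frac{n-2}{4}}G_2(F(x),F(z'))$: the vanishing of the $\p_\nu c|_{\p M}$ factor is what makes the factors of $c$ align precisely so that they cancel in the quotient defining $H_i$. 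This is where the specific normalization of the conformal gauge (namely $\p_\nu c|_{\p M}=0$, as opposed to merely $c|_{\p M}=1$) is essential.
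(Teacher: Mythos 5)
Your proof is correct and follows essentially the same route as the paper: compute $P_1(x)$ in terms of $P_2(F(x))$ from the conformal identity for the Green's functions together with the boundary normalizations $c|_{\p M}=1$, $F|_{\p M}=\mathrm{Id}$, $dF(\nu_1)=\nu_2$, and then divide in the definition of $H_i$. One small inaccuracy in your closing remark: the product-rule term $\p_{\nu_1'}\bigl(c(z')^{-\frac{n-2}{4}}\bigr)\,G_2(F(x),F(z'))$ vanishes on $\p M$ already because $G_2$ is a \emph{Dirichlet} Green's function, so $G_2(F(x),F(z'))=0$ for $z'\in\p M$; thus $\p_\nu c|_{\p M}=0$ is not actually needed at that step (it is used elsewhere in the paper), and what is genuinely essential in this particular calculation is $c|_{\p M}=1$, which makes the surviving term's $\tilde c(z')$ factor equal to $1$.
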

\begin{proof}
Set $\tilde{c}=c^{-\frac{n-2}{4}}$. We have for $x\in \Int(U)$ that
 \begin{align}\label{bndr_trnsfm}
 &P_1^2(x)=\int_{\p M} \left(\p_{\nu_1'}G_1(x,z')\right)^2dS(z')=\tilde{c}^2(x)\int_{\p M} \left(\p_{\nu_1'}G_2(F(x),F(z'))\right)^2dS(z') \\
 \nonumber &=\tilde{c}^2(x)\int_{\p M} \left((F_*\p_{\nu_1'})G_2(F(x),z')\right)^2dS(z') \\
 \nonumber &=\tilde{c}^2(x)\int_{\p M} \left(\p_{\nu_2'}G_2(F(x),w')|_{w'=F(z')}\right)^2dS(z') \\ 
\nonumber&=\tilde{c}^2(x)\int_{\p M} \left(\p_{\nu_2'}G_2(F(x),w')\right)^2dS(w')=\tilde{c}^2(x)P^2_2(F(x)).
\end{align}
In the second equality we have used assumptions on the boundary behavior of $c$. In the fourth equality we have used that on ${\p M}$ we have
\begin{align*}
F_*\p_{\nu_1}&=\psi_{2*}^{-1}(Z_2^{-1}\circ Z_1)_* \psi_{1*}\p_{\nu_1}= \psi_{2*}^{-1}(Z_2^{-1}\circ Z_1)_*\p_{x_n} \\
&=\psi_{2*}^{-1}\p_{x_n}=\p_{\nu_2},
\end{align*}
where the second to last equality holds since the Jacobian matrix of $Z_2^{-1}\circ Z_1$ is identity matrix on ${\p M}$. We have also used that $F$ is identity on ${\p M}$ in~\eqref{bndr_trnsfm}.

Thus it follows that
$$
H_1(x,y)=\frac{G_1(x,y)}{P_1(x)P_1(y)}=\frac{\tilde{c}(x)\tilde{c}(y)G_2(F(x),F(y))}{\tilde{c}(x)\tilde{c}(y)P_2(F(x))P_2(F(y))}=H_2(F(x),F(y))
$$
for $(x,y)\in \Int(U)\times \Int(U)\setminus\{x = x'\}$.

%
%

\end{proof}

Before continuing we make some general remarks about the functions $H(x,y)$ and $P(x)$.
\begin{Lemma}\label{H_basic}
 Let $(M,g)$ be a Riemannian manifold with boundary $\p M$, and let $H(x,y)$ be the scaled (Dirichlet) Green's function
 $$
 H(x,y)=\frac{G(x,y)}{P(x)P(y)}.
 $$
 Then, when $x \in \Int(M)$ is fixed and $y \in \Int(M)$ is close to $x$, there are $c_1, c_2 > 0$ so that 
 $$
 c_1 d(x,y)^{2-n} \leq H(x,y) \leq c_2 d(x,y)^{2-n}.
 $$
 
 We have
 $$
\lim_{y \to x} \frac{H(x,y)}{d(x,y)^{2-n}} = \frac{1}{P(x)^2} c(n),
 $$
 with constant $c(n)$ depending only on the dimension $n$:
 $$
 c(n)=\frac{1}{(n-2)\omega_n}, \quad \omega_n=\frac{2\pi^{n/2}}{\Gamma(n/2)}.
 $$
\end{Lemma}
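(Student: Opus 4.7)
The plan is to reduce everything to the well-known asymptotic behavior of the Green's function of a second-order elliptic operator on a Riemannian manifold near the diagonal, and then use continuity of the denominator $P(x)P(y)$ to transfer the asymptotics to $H$.

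First, I would recall the local parametrix construction for $L_g$. Since $L_g = -\Delta_g + \tfrac{n-2}{4(n-1)}S_g$ has the same principal symbol as $-\Delta_g$ (the scalar-curvature term being of order zero), a standard construction (working in $g$-normal coordinates at the fixed interior point $x$ and comparing with the Euclidean fundamental solution $\tfrac{1}{(n-2)\omega_n}|z|^{2-n}$ of $-\Delta$) gives, for $y$ in a small geodesic ball around $x$,
\[
G(x,y) \;=\; \frac{1}{(n-2)\omega_n}\,d_g(x,y)^{2-n} + R(x,y),
\]
where the remainder $R(x,y)$ satisfies $R(x,y) = o(d_g(x,y)^{2-n})$ as $y\to x$; in fact $R$ can be taken to have a strictly milder singularity (of order $d^{3-n}$ if $n\geq 4$, or $\log$-type if $n=3$). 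This is the key classical input: for a second-order elliptic operator on a smooth compact manifold with boundary, the Dirichlet Green's function has the Hadamard-type expansion with leading coefficient depending only on the dimension. The correction from the potential term $\tfrac{n-2}{4(n-1)}S_g$ and from the boundary (assuming $x$ is in the interior) only contributes to the subleading part of $R$.

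Next, I would unpack the definition $H(x,y) = G(x,y)/(P(x)P(y))$. The function $P(y)$ is defined by a boundary integral involving $\partial_{\nu'}G(y,z')$, which is smooth in $y$ for $y$ in the interior (since $y \mapsto G(y,\cdot)|_{\partial M}$ is smooth in that regime by elliptic regularity away from the singularity), and it has already been observed to be strictly positive on $\Int(M)$. Consequently $P$ is continuous at $x$ and $P(y)\to P(x) > 0$ as $y\to x$. Dividing the asymptotic for $G$ by $P(x)P(y)$ yields
\[
\lim_{y\to x} \frac{H(x,y)}{d_g(x,y)^{2-n}} \;=\; \frac{1}{P(x)^2}\cdot\frac{1}{(n-2)\omega_n} \;=\; \frac{c(n)}{P(x)^2},
\]
which is exactly the limit claimed, with $c(n) = 1/((n-2)\omega_n)$ and $\omega_n = 2\pi^{n/2}/\Gamma(n/2)$ the volume of the unit sphere $S^{n-1}$.

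Finally, the two-sided bound $c_1 d_g(x,y)^{2-n} \leq H(x,y) \leq c_2 d_g(x,y)^{2-n}$ is an immediate consequence of the limit and continuity: since $H(x,y)/d_g(x,y)^{2-n}$ tends to the positive quantity $c(n)/P(x)^2$, there is a neighborhood of $x$ on which this ratio lies in, say, $[\tfrac12 c(n)/P(x)^2,\,2c(n)/P(x)^2]$, and we may take $c_1,c_2$ to be these values. The only mildly delicate point of the whole argument is justifying the asymptotic expansion for $G$ up to the stated order; this is a standard fact about Green's functions for second-order elliptic operators with smooth coefficients (see, e.g., the parametrix construction in the Yamabe-problem literature), and aside from quoting it, nothing else in the proof requires real work.
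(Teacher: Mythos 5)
Your proposal is correct and follows essentially the same route as the paper: both reduce to the classical Hadamard-type leading asymptotic $G(x,y) = \tfrac{1}{(n-2)\omega_n}\,d(x,y)^{2-n} + o(d(x,y)^{2-n})$ for a second-order elliptic operator with the same principal part as $-\Delta_g$, and then use smoothness and positivity of $P$ on $\Int(M)$ to pass the limit and the two-sided bound to $H=G/(P(x)P(y))$. The only cosmetic difference is that the paper cites a specific reference for the Green's function asymptotics rather than sketching the parametrix argument, as you do.
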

\begin{proof}
The asymptotic behavior near the diagonal of the Green's function of a uniformly elliptic operator is well known though an explicit reference seems to be hard to find. We have ($n\geq 3$)
\begin{equation}\label{Gas}
G(x,y) =\frac{1}{(n-2)\omega_n} d(x,y)^{2-n}+o(d(x,y)^{n-2}).
\end{equation}
We refer to~\cite[Lemma 25]{AS} on this result, which shows that in given local coordinates there holds
$$
G(x,y)=\frac{1}{(n-2)\omega_n} |g(x)|^{1/2} \langle g^{-1}(x)(x-y),(x-y)\rangle^{\frac{n-2}{2}}
$$
for $y$ near $x$. (Note that there is a typo in the power of $\langle g^{-1}(x)(x-y),(x-y)\rangle$ in the reference.) 
Using normal coordinates centered at $x$ then shows~\eqref{Gas}, which is a coordinate invariant equation in the leading term (the constants implied by the ''o''-notation depend on coordinates).

The results of the lemma follows since $P$ is smooth and positive in $\Int(M)$.
\end{proof}

\subsection{The embedding}
So far we have determined the metrics on an open neighborhood $U$ of the whole boundary ${\p M}$, see Proposition~\ref{near_whole_bndr}. For this we have assumed the knowledge of the Dirichlet-to-Neumann map on the whole boundary. Our next and final task is to extend the conformal mapping $F: U\to F(U)$ obtained in Proposition~\ref{near_whole_bndr} to a global conformal mapping $J:M_1\to M_2$.

Since the boundary behavior of the functions $P_i$, $i=1,2$, might pose some issues, we will work only on the interior parts of the manifold. (The functions $P_i$ are integrals of Poissons kernels, which are singular on the boundary.) Working only on the interiors is possible, since we have already determined the conformal metric near the whole boundary $\p M$.

We make the following definitions to ease our work. Below the mapping $F$ is the one found in Proposition~\ref{near_whole_bndr}. 
\begin{Definition}
We set
 \begin{align}
 N(\eps)&=\{x\in M_1: d_1(x,\p M_1)\leq \eps\} \\
 C(\eps)&=\{x\in M_1: d_1(x,\p M_1)> \eps\} \nonumber,
 \end{align}
 where $\eps >0$ is small enough so that $C(\eps)$ is connected and $N(\eps)\subset \subset U$. Then we set 
 $$
 \widehat{M}_1:=C(\eps) \mbox{ and } \widehat{U}:=U\setminus N(\eps).
 $$
 Since $F$ is a diffeomorphism $U\to F(U)$ and it maps $\p M_1$ to $\p M_2$ (i.e.\ $\p M \to \p M$), we have that $F(N(\eps))$ is a closed neighborhood of $\p M_2$ in $M_2$. We set
 $$
 \widehat{M}_2:=M_2\setminus F(N(\eps)).
 $$
 The manifolds $\widehat{M}_i$ are open manifolds.
\end{Definition}

With these definitions, we define the maps
$$
\mH_i:\widehat{M}_i\to H^s(\widehat{U})
$$
by setting
$$
\mH_1(z_1)(y)=H_1(z_1,y) \mbox{ and } \mH_2(z_2)(y)=H_2(z_2,F(y)).
$$
Here $z_i\in \widehat{M}_i$ and $y\in \widehat{U}$ and $s<2-n/2$. For $s<1-n/2$ these mapping are $C^1$, cf.~\cite{LTU}. 

A conformal mapping between locally conformally real analytic manifolds is real analytic between the interiors of the manifolds. This follows from standard regularity theory, and the proof is also given in Proposition~\ref{confComeg} in the appendix. It follows that $H_2(z_2,F(y))$ is real analytic for $z_2\neq F(y)$. 

We have the following result.
\begin{Lemma} \label{lemma_mhi_embedding}
 $\mH_i:\widehat{M}_i\to H^s(\widehat{U})$ are $C^1$-embeddings, for any $s<1-n/2$.
\end{Lemma}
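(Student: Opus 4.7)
The plan is to verify the four defining properties of a $C^1$-embedding: $C^1$ regularity into the target Sobolev space, injectivity of the map, injectivity of its differential, and being a homeomorphism onto its image. The two key inputs will be the on-diagonal asymptotics $H_i(z,y)\sim \frac{c(n)}{P_i(z)^2}d_i(z,y)^{2-n}$ from Lemma~\ref{H_basic}, together with the joint real analyticity of $H_i(z,y)$ off the diagonal recorded in the Corollary above.

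For the $C^1$ regularity I would first note that for fixed $z\in\widehat{M}_i$ the only singularity of $y\mapsto H_i(z,y)$ inside $\widehat{U}$ is of type $d_i(z,y)^{2-n}$ plus a smoother remainder, so it lies in $H^s(\widehat{U})$ exactly when $s<2-n/2$. The $z$-derivatives, computed in local coordinates, introduce an extra factor of $d_i(z,y)^{-1}$ in the singular part, and hence lie in $H^s(\widehat{U})$ in the range $s<1-n/2$. A direct difference quotient argument, using the explicit leading singularity and smoothness of the remainder, then shows that $d\mH_i(z)$ exists and depends continuously on $z$ in the operator norm from $T_z\widehat{M}_i$ into $H^s(\widehat{U})$.

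For injectivity, suppose $\mH_1(z)=\mH_1(w)$ with $z,w\in\widehat{M}_1$, so that $H_1(z,y)=H_1(w,y)$ for all $y\in\widehat{U}$. By the Corollary both sides are real analytic in $y$ on $\Int(M_1)\setminus\{z,w\}$, so the identity principle on this connected set yields equality there. Letting $y\to z$ in the interior, the left side blows up like $d_1(z,y)^{2-n}$ while the right side is finite unless $w=z$. The analogous argument on the open set $F(\widehat{U})\subset\widehat{M}_2$ handles $\mH_2$. For the immersion property, if $d\mH_1(z)(v)=0$, then $v^j\partial_{z^j}H_1(z,y)=0$ for $y\in\widehat{U}$ and, again by real analytic continuation, for all $y\in\Int(M_1)\setminus\{z\}$. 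In Riemannian normal coordinates $y=\exp_z(\xi)$ the leading singularity of this expression, obtained by differentiating the asymptotic in Lemma~\ref{H_basic}, is $\frac{(n-2)c(n)}{P_1(z)^2}\frac{v\cdot\xi}{|\xi|^n}$, which strictly dominates the $O(|\xi|^{2-n})$ remainder; comparing blow-up rates forces $v\cdot\xi=0$ for all small $\xi$, hence $v=0$. The argument for $\mH_2$ is the same, since $F$ is a local diffeomorphism.

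The main obstacle will be showing that $\mH_i$ is a homeomorphism onto its image, since $\widehat{M}_i$ is only open rather than compact and the usual compactness argument does not apply directly. The cleanest approach is to work on an exhaustion of $\widehat{M}_i$ by compact subsets $K_j$: on each $K_j$ the restriction $\mH_i|_{K_j}$ is a continuous injection from a compact Hausdorff space, hence automatically a homeomorphism onto its image, and the global statement is then assembled from these pieces using the injectivity and immersion already established.
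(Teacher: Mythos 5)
Your discussion of $C^1$-regularity, injectivity, and injectivity of the differential follows the same outline as the paper (which itself defers the regularity and immersion details to \cite{LTU}), and the injectivity argument via real analyticity and blow-up matches the paper's almost word for word. However, your proposed treatment of the ``homeomorphism onto image'' step contains a genuine gap, and it is precisely the point where the paper's proof does something different.

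You propose to exhaust the open manifold $\widehat{M}_i$ by compact subsets $K_j$, observe that $\mH_i|_{K_j}$ is a homeomorphism onto its image, and then ``assemble'' the global statement using injectivity and the immersion property. This does not work: an injective $C^1$ immersion that restricts to an embedding on every compact piece of an exhaustion need not be an embedding globally. The standard counterexample is the figure-eight curve $t \mapsto (\sin 2t, \sin t)$ on $(-\pi,\pi)$, which is an injective immersion whose restriction to every compact subinterval is an embedding, yet the inverse is discontinuous at the crossing point because the two ``ends'' of the interval approach a point in the image of the interior. The same failure mode is possible a priori here: one would need to rule out that points near the inner boundary of $\widehat{M}_i$ have images in $H^s(\widehat{U})$ accumulating on the image of an interior point.

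The paper sidesteps this by a different observation: $\mH_i$ in fact extends to an injective $C^1$ immersion on the \emph{closure} $\overline{\widehat{M}_i}$, which is compact. (This extension is available because the singularity $d(z,\cdot)^{2-n}$ still lies in $H^s(\widehat{U})$ when $z$ is on the inner boundary $\{d_i(\cdot,\partial M)=\eps\}$, and the real-analyticity/asymptotic arguments for injectivity go through there as well.) A continuous injection from a compact Hausdorff space into a Hausdorff space is automatically closed, hence a homeomorphism onto its image; restricting back to the open subset $\widehat{M}_i$ then gives the desired embedding. If you want to salvage your approach, you would need to replace the exhaustion argument by this extension-to-the-closure argument, or else provide an explicit separation estimate preventing the figure-eight phenomenon near $\partial\widehat{M}_i$.

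One further small caveat: when you argue that the leading singularity $\frac{(n-2)c(n)}{P_1(z)^2}\frac{v\cdot\xi}{|\xi|^n}$ of $d\mH_1(z)(v)$ dominates the remainder, note that Lemma~\ref{H_basic} only gives $H_1(z,y) = \frac{c(n)}{P_1(z)^2}d(z,y)^{2-n} + o(d(z,y)^{2-n})$, which does not by itself control the $z$-derivative of the remainder at order $o(d^{1-n})$. One needs the finer structure of the Green's function expansion (as in \cite{LTU} and \cite{AS}) to justify that the differentiated remainder is subordinate.
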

\begin{proof}
The main point is to show that $\mH_i$ is injective and its derivative $D \mH_i(x): T_x \widehat{M}_i \to H^s(\widehat{U})$  is injective at any $x$. Then $\mH_i$ is an immersion (the range of $D \mH_i(x)$ is a closed split subspace since it is finite dimensional), and since $\mH_i$ actually extends as an injective $C^1$ immersion to the closure of $\widehat{M}_i$, which is compact, it is an embedding.

The proofs that $\mH_i$ and its derivative are injective are analogous to the corresponding results in~\cite{LTU}. We only prove injectivity of $\mH_2$ to show what is the idea of the proof.

Let $x_1,x_2\in \widehat{M}_2$ be such that
$$
\mH_2(x_1)=\mH_2(x_2).
$$
Thus 
$$
H_2(x_1,z)=H_2(x_2,z) \mbox{ for } z \mbox{ in the open set }F(\widehat{U}).
$$
Thus, by real analyticity, we have
$$
H_2(x_1,z)=H_2(x_2,z) \mbox{ for } z\in \widehat{M}_2\setminus \{x_1,x_2\}.
$$
Since $H_2(x,y)$ blows up only when $x=y$, we conclude that $x_1=x_2$.
\end{proof}

By Proposition~\ref{G_to_H} we have
 \begin{equation}\label{Hid}
 H_1(x,y)=H_2(F(x),F(y)),\ (x,y)\in \widehat{U}\times \widehat{U}\setminus\{x = y\}.
 \end{equation}
It follows that
\[
\mH_2 \circ F = \mH_1 \mbox{ on } \widehat{U},
\]
and thus $\mH_2|_{F(\widehat{U})}$ is a bijective map $F(\widehat{U}) \to \mH_1(\widehat{U})$ (it is injective by Lemma \ref{lemma_mhi_embedding}). This shows that 
\begin{equation}\label{F_is_J_on_hatU}
\mH_2^{-1}\circ \mH_1=F \mbox{ on } \widehat{U}.
\end{equation}

In the following result, which will imply the main theorem of this paper, the conformal diffeomorphism $F:\widehat{U}\to F(\widehat{U})$ is extended to a conformal diffeomorphism  
$$
J=\mH_2^{-1}\circ \mH_1: \widehat{M}_1\to \widehat{M}_2.
$$

\begin{Theorem}\label{extension_argument}
 Assume that we have 
 $$
 H_1(x,y)=H_2(F(x),F(y)), \mbox{ for } (x,y)\in \widehat{U}\times \widehat{U}\setminus\{x =y\}
 $$
 Then the sets $\mH_1(\widehat{M}_1)$ and $\mH_2(\widehat{M}_2)$ are identical subsets of $H^s(\widehat{U})$. Moreover, the map $J:=\mH_2^{-1}\circ \mH_1: \widehat{M}_1\to \widehat{M}_2$ is conformal.
\end{Theorem}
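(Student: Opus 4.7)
The plan is to extend the local conformal diffeomorphism $F$ obtained in Proposition~\ref{near_whole_bndr} to a global conformal diffeomorphism $J:\widehat{M}_1\to\widehat{M}_2$ via the embeddings $\mathcal{H}_i$, and then read off conformality from the diagonal asymptotics in Lemma~\ref{H_basic}.

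\medskip

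\noindent\textbf{Stage 1 (Global extension).} Consider
$$
A = \{z_1 \in \widehat{M}_1 : \mathcal{H}_1(z_1) \in \mathcal{H}_2(\widehat{M}_2)\}.
$$
By (\ref{F_is_J_on_hatU}), $\widehat{U}\subset A$, so $A\neq\emptyset$. I aim to show $A$ is both open and closed in the connected manifold $\widehat{M}_1$, so that $A=\widehat{M}_1$ and we may define $J := \mathcal{H}_2^{-1}\circ \mathcal{H}_1$ on all of $\widehat{M}_1$; a symmetric argument applied to $J^{-1}$ then yields the reverse inclusion and hence $\mathcal{H}_1(\widehat{M}_1)=\mathcal{H}_2(\widehat{M}_2)$. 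For openness at $z_1^0\in A$ with $z_2^0 = J(z_1^0)$, I use that $\mathcal{H}_2$ is a $C^1$-embedding with $z_2\mapsto H_2(z_2,F(y))$ real analytic for $y\neq F^{-1}(z_2)$: this lets me pick $n=\dim \widehat{M}_2$ points $y_1,\dots,y_n\in\widehat{U}$ (avoiding $z_1^0$ and $F^{-1}(z_2^0)$) so that $e(z_2) := (H_2(z_2,F(y_j)))_{j=1}^n$ is a real-analytic diffeomorphism from a neighborhood of $z_2^0$ onto its image. Then $\psi(z_1) := e^{-1}(H_1(z_1,y_1),\dots,H_1(z_1,y_n))$ is real analytic on a neighborhood $V_1$ of $z_1^0$ with $\psi(z_1^0)=z_2^0$. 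The identity $\mathcal{H}_1(z_1)=\mathcal{H}_2(\psi(z_1))$ on $V_1$ follows by real-analytic continuation: the jointly real-analytic function $(z_1,y)\mapsto H_1(z_1,y)-H_2(\psi(z_1),F(y))$ on $V_1\times \widehat{U}$ vanishes on $\{z_1^0\}\times \widehat{U}$ (since $\mathcal{H}_1(z_1^0)=\mathcal{H}_2(z_2^0)$) and on $V_1\times\{y_1,\dots,y_n\}$, and the symmetry $H_i(a,b)=H_i(b,a)$ combined with uniqueness of real-analytic continuation forces it to vanish throughout. Closedness of $A$ is a standard compactness argument: a sequence $z_1^k\to z_1^\infty\in\widehat{M}_1$ in $A$ produces $z_2^k=J(z_1^k)\in\widehat{M}_2$ with $\mathcal{H}_2(z_2^k)\to \mathcal{H}_1(z_1^\infty)$, and by compactness of $\overline{\widehat{M}_2}$ plus the blow-up of $H_2$ near the diagonal (which rules out limits on $\partial\widehat{M}_2$ when $z_1^\infty\notin\partial\widehat{M}_1$), a subsequence converges to some $z_2^\infty\in\widehat{M}_2$ with $\mathcal{H}_2(z_2^\infty)=\mathcal{H}_1(z_1^\infty)$.

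\medskip

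\noindent\textbf{Stage 2 (Conformality).} The extension yields
$$
H_1(x,y) = H_2(J(x),J(y)) \quad \text{for all } x,y \in \widehat{M}_1,\ x\neq y.
$$
Fix $x\in\widehat{M}_1$ and a unit vector $v\in T_x\widehat{M}_1$, and set $y_t = \exp_x^{g_1}(tv)$. Then $d_1(x,y_t) = t+O(t^2)$ and $d_2(J(x),J(y_t)) = |DJ(x)v|_{g_2}\,t+O(t^2)$. Multiplying the identity by $d_1(x,y_t)^{n-2}$ and letting $t\to 0$, Lemma~\ref{H_basic} applied on each side gives
$$
\frac{c(n)}{P_1(x)^2} \;=\; |DJ(x)v|_{g_2}^{-(n-2)}\,\frac{c(n)}{P_2(J(x))^2}.
$$
The right-hand side is independent of $v$, so $|DJ(x)v|_{g_2}$ is independent of the direction of the unit vector $v$; hence $DJ(x)$ is a conformal linear map and
$$
J^*g_2 = \lambda\, g_1, \qquad \lambda(x) = \left(\frac{P_1(x)}{P_2(J(x))}\right)^{4/(n-2)}.
$$

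\medskip

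\noindent The main obstacle is the openness step of Stage 1, namely propagating $\mathcal{H}_1(z_1)=\mathcal{H}_2(\psi(z_1))$ from the finite test set $\{y_j\}$ to all $y\in\widehat{U}$. This hinges on joint real analyticity of the scaled Green's functions in the interior (established in the lemma preceding Proposition~\ref{G_to_H}), the symmetry of Green's functions, and the immersion property of $\mathcal{H}_2$; it is the technical heart of the LTU-style embedding argument, here adapted to the conformal setting.
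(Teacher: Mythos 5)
Your Stage~2 (conformality) is essentially the paper's Lemma~\ref{H_implies_conformal} verbatim, and your overall open-closed scheme matches the paper's structure. However, the openness step in Stage~1 has a genuine gap that the paper's proof is specifically constructed to avoid.

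You claim that the jointly real-analytic function
$g(z_1,y) := H_1(z_1,y)-H_2(\psi(z_1),F(y))$,
vanishing on the ``cross'' $\{z_1^0\}\times\widehat{U} \,\cup\, V_1\times\{y_1,\dots,y_n\}$, must vanish on all of $V_1\times\widehat{U}$ by analytic continuation. This is false in general: in local coordinates the function $(z_1-z_1^0)\prod_{j=1}^n(y-y_j)$ is jointly real analytic, vanishes identically on exactly such a cross, and is nonzero elsewhere. The appeal to the symmetry $H_i(a,b)=H_i(b,a)$ does not repair this, because $g$ itself is \emph{not} symmetric in $(z_1,y)$ --- the second term involves $\psi$ in the first slot and $F$ in the second, and $\psi\neq F$ off $\widehat{U}$. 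Nothing in your argument supplies the extra rigidity needed.

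The reason this cannot be fixed within your framework is structural: you try to prove openness of $A=\{z_1:\mathcal H_1(z_1)\in\mathcal H_2(\widehat M_2)\}$ at an arbitrary point $z_1^0\in A$, but the only input at $z_1^0$ is a single identity $\mathcal H_1(z_1^0)=\mathcal H_2(z_2^0)$. The paper instead works with the \emph{largest open set} $D_1\subset B_1$ where $J$ is already conformal, takes a point $x_1\in\partial D_1$, and uses that $x_1$ is a limit of an \emph{open} set of points where the coincidence $\mathcal H_1^\Omega=\mathcal H_2^\Omega\circ J$ already holds. After projecting onto the $n$-dimensional tangent space $\mathcal V=D\mH_1^\Omega(T_{x_1}\widehat M_1)=D\mH_2^\Omega(T_{x_2}\widehat M_2)$ and applying the inverse function theorem to get real-analytic parametrizations $K_i$, the graph maps $\Phi_i=\mH_i^\Omega\circ K_i$ agree on the \emph{nonempty open set} $\Theta_1(D_1)\cap\mathcal U$. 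It is this agreement on an open subset of the $n$-dimensional parameter domain --- not at a point or at finitely many $y$-values --- that real analyticity then propagates to all of $\mathcal U$. Your ``finite test set'' $\{y_1,\dots,y_n\}$ plays the same role as the projection $P$ in constructing the local inverse $\psi$ (that part is fine), but it does not replace the open set of coincidence that drives the analytic continuation. To repair the proof you would need to adopt the paper's open-closed argument on $D_1$ (or equivalently, establish coincidence on an open set by some other means, e.g.\ the paper's Lemma~\ref{local_equiv_pf} uses a PDE unique-continuation argument where agreement on an open $y$-set, not on finitely many $y$, is the hypothesis).

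A secondary, more minor point: in your closedness step you claim the diagonal blow-up of $H_2$ rules out $z_2^\infty\in\partial\widehat M_2$ whenever $z_1^\infty\notin\partial\widehat M_1$. The paper is more careful and splits into $x_2\in F(U)$ (handled via the already-established conformality of $F$) versus $x_2\notin F(U)$ (hence $x_2\in\widehat M_2$); the blow-up argument alone does not handle the case $z_2^\infty$ on the inner boundary of $\widehat M_2$ with $z_1^\infty$ on the corresponding inner boundary of $\widehat U$.
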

\begin{proof}
Define sets
$$
D_1\subset B_1\subset  \widehat{M}_1,
$$
where $B_1$ is the largest open set of points $x\in \widehat{M}_1$ such that
$$
\mH_1(x)\in \mH_2(\widehat{M}_2) \mbox{ for } x\in B_1,
$$
and where $D_1$ is the largest open set in $B_1$ where the mapping $J=\mH_2^{-1}\circ \mH_1$ is conformal with the conformal factor bounded by
\begin{align}\label{uplowbnd}
K&=\left(\frac{\max_{x_2\in \widehat{M}_2}P_2(x_2)}{\min_{x_1\in \widehat{M}_1}P_1(x_1)}\right)^{\frac{4}{n-2}} \\
\nonumber k&=\left(\frac{\min_{x_2\in \widehat{M}_2}P_2(x_2)}{\max_{x_1\in \widehat{M}_1}P_1(x_1)}\right)^{\frac{4}{n-2}}
\end{align}
from above and below respectively.
Note that $K$ and $k$ are finite and positive respectively, since the functions $P_i$ are continuous on $M_i$, $i=1,2$.

The mapping $\mH_2^{-1}\circ \mH_1$ and the notion of conformality of it are defined on $B_1$ since $\mH_i$, $i=1,2$, are $C^1$ smooth embeddings. The set $D_1$ and thus also $B_1$ contains $\widehat{U}$ and they are non-empty by~\eqref{F_is_J_on_hatU}, the assumption and Lemma~\ref{H_implies_conformal} presented after this proof. (Lemma~\ref{H_implies_conformal} gives the bounds~\eqref{uplowbnd}.)

Let $x_1$ be a boundary point of $D_1$ in $\widehat{M}_1$. Let $(p_k)\in D_1$ be a sequence such that
\begin{equation}\label{limit_seq}
\lim_{k\to \infty}p_k=x_1.
\end{equation}
Since the closure of $\widehat{M}_2$ in $M_2$ is compact we can pick subsequence of $(p_k)$ such that
$$
\lim_{k\to\infty} J(p_k)=x_2\in M_2.
$$
If $x_2\in F(U)$ then $x_1\in D_1$ and we are done. This is because $F(U)$ is open in $M_1$ and thus $F$ is a conformal diffeomorphism from a neighborhood of $x_1$ to a neighborhood of $x_2$ in this case. We remark that this is the point where the determination near the whole boundary is used. So, we assume that $x_2\notin F(U)\supset F(N(\eps))$. It follows that
$$
x_2\in \widehat{M}_2.
$$

We turn our attention to the points $x_1\in \widehat{M}_1$ and $x_2\in \widehat{M}_2$, and we will show that $x_1\in D_1$. Thus, we will have that $D_1$ is closed. Since $D_1$ is also open and $\widehat{M}_1$ connected, it will then follow that $D_1=\widehat{M}_1$.

Define $\Omega=\widehat{U}\setminus \overline{B}(x_1,\delta)$, where $\delta$ is small enough so that $\Omega$ is nonempty and connected. Define the maps $\mH_i^\Omega: \widehat{M}_i \to H^s(\Omega)$ by restricting the distributions $\mH_i(z_i)$, $z_i\in \widehat{M}_i$, to $\Omega$. 
The mappings $\mH_1^\Omega$ and $\mH_2^{\Omega}$ are real analytic on $\widehat{M}_1 \setminus \overline{\Omega}$ and $\widehat{M}_2 \setminus \overline{F(\Omega)}$, respectively. 

Since on $D_1$ we have by its definition
\begin{equation}\label{triviality}
\mH_1^\Omega=\mH_2^\Omega\circ J,
\end{equation}
and since the mappings $\mH_i^\Omega$ are continuous, it follows that there is a distribution $u\in H^s(\Omega)$ such that
$$
u=\mH_1^\Omega(x_1)=\lim_k\mH_1^\Omega(p_k)=\lim_k\mH_2^\Omega(J(p_k))=\mH_2^\Omega(x_2).
$$

Next we argue that there is an $n$-dimensional space
\begin{equation}\label{finite_param_space}
\mathcal{V}=D\mH_1^\Omega(T_{x_1}\widehat{M}_1)=D\mH_2^\Omega(T_{x_2}\widehat{M}_2).
\end{equation}
The maps $\mH_i^{\Omega}$ are also $C^1$-embeddings with the same proof as above, so the spaces $D\mH_i^\Omega(T_{x_i}\widehat{M}_i)$ are $n$-dimensional. They also coincide: let $v=D\mH_2^\Omega(V)\in D\mH_2^\Omega(T_{x_2}\widehat{M}_2)$, where $V\in T_{x_2}\widehat{M}_2$. We take a sequence $V_k\in T_{J(p_k)}\widehat{M}_2$ such that
$$
V_k\to V\in T_{x_2}\widehat{M}_2,
$$
which is bounded in the norm given by $g_2$.
Since $DJ$ is isomorphism on $D_1$, we have there a sequence is $W_k\in T_{p_k}\widehat{M}_k$ such that $DJ(W_k)=V_k$. Note that by the definition of $D_1$ this sequence is bounded in the norm given by $g_1$ by the bounds~\ref{uplowbnd}. Thus, by passing to a subsequence we may assume by compactness that
$$
W_k\to W\in T_{x_1}\widehat{M}_1.
$$
Now, by the continuity of the differentials of $\mH_i$, we have that
$$
D\mH_1^\Omega(W)=\lim_k D\mH_1^\Omega(W_k)=\lim_k D\mH_2^\Omega (V_k)=D\mH_2^\Omega(V).
$$
Inverting the role of $\widehat{M}_1$ and $\widehat{M}_2$ then proves~\eqref{finite_param_space}.

Let $\mathcal{L}\subset H^s({\Omega})$ be the orthogonal complement of $\mathcal{V}$ in $H^s(\Omega)$, and let $P:H^s(\Omega)\to \mathcal{V}$ be the orthogonal projection. We define the maps
$$
\Theta_i=P\circ \mH_i^\Omega:\widehat{M}_i\to \mathcal{V}.
$$
Now $\mH_1^{\Omega}$ is real analytic in $\widehat{M}_1 \setminus \overline{\Omega}$ and $\mH_2^{\Omega}$ is real analytic in $\widehat{M}_2 \setminus \overline{F(\Omega)}$. Also, the projection $P$ is real analytic as it is linear, the derivatives of the maps $\Theta_i$ at $x_i$, $i=1,2$, are invertible, and $\Theta_1(x_1) = \Theta_2(x_2)$. 
It follows from the inverse function theorem that there are neighborhoods $W_i$ of $x_i$ in $\widehat{M}_i$ and local real analytic inverses $K_i:\mathcal{U} \to W_i$ satisfying
$$
\Theta_i (K_i(v))=v.
$$
Here $\mathcal{U}$ is some neighborhood of the point $\Theta_1(x_1) = \Theta_2(x_2)$ in $\mathcal{V}$. (In what follows, it is useful to think $\mathcal{U}$ as a mutual domain of local parametrization of $M_1$ and $M_2$ near $x_1$ and $x_2$ respectively.) 

Thus we can represent the graphs of real-analytic functions $\mH_i^\Omega$ as graphs of real analytic mappings
\begin{equation}\label{HK_same}
\Phi_i:\mathcal{U}\to \mathcal{L}, \quad \Phi_i(v)=\mH^\Omega_i(K_i(v)).
\end{equation}
The real analytic maps $\Phi_i$ coincide in an open subset $\Theta_1(D_1)\cap \mathcal{U}$ because
\begin{align*}
\Phi_1(v)(\cdot)&=H_1(K_1(v),\cdot)=H_2(J\circ K_1(v),F(\cdot))=H_2(J\circ \Theta_1^{-1}(v),F(\cdot))\\
&=H_2(J\circ (P\circ \mH_1^\Omega)^{-1}(v),F(\cdot))
=H_2(J\circ (P\circ \mH_2^\Omega\circ J)^{-1}(v),F(\cdot))\\
&=H_2((P\circ \mH_2^\Omega)^{-1}(v),F(\cdot))=H_2(K_2(v),F(\cdot))=\mH^\Omega_2(K_2(v))(\cdot)\\
&=\Phi_2(v)(\cdot), \quad \text{for } v\in \Theta_1(D_1)\cap \mathcal{U}.
\end{align*}
Thus $\Phi_1$ and $\Phi_2$ coincide in the whole set $\mathcal{U}$ by real analyticity.


It follows that $x_1$ is an interior point of $B_1$: For $y$ belonging to the open neighborhood $K_1(\mathcal{U})\subset \widehat{M}_1$ of $x_1$ we have
$$
\mH^\Omega_1(y)(\cdot)=\mH^\Omega_1(K_1(v))(\cdot)=\Phi_1(v)(\cdot)=\Phi_2(v)(\cdot)=\mH^\Omega_2(K_2(v))(\cdot)
$$
Here $v$ is some element of $\mathcal{U}$. Thus $\mH^\Omega_1(y)\in \mH^\Omega_2(\widehat{M}_2)$. Since for $z_i\in \widehat{M}_i$ we have
\begin{equation}\label{local_equiv}
\mH_1^\Omega(z_1)=\mH_2^\Omega(z_2) \mbox{ if and only if } \mH_1(z_1)=\mH_2(z_2),
\end{equation}
it follows that $\mH_1(y)\in \mH_2(\widehat{M}_2)$ and consequently $x_1\in \Int(B_1)$. We postpone the proof of the equivalence~\eqref{local_equiv} above to Lemma~\ref{local_equiv_pf}.

Using~\eqref{local_equiv}, we have on $\mathcal{U}$ that
\begin{align*}
\Phi_2=\Phi_1 &\iff \mH_2^\Omega\circ K_2=\mH_1^\Omega\circ K_1 \iff \mH_2\circ K_2=\mH_1\circ K_1.
\end{align*}
Since $J$ is defined as $\mathcal{H}_2^{-1}\circ \mathcal{H}_1$, we have 
\[
J=K_2\circ K_1^{-1} \text{ on $K_1(\mathcal{U})$.}
\]
It also follows that
$$
\mH_1^\Omega=\mH_2^\Omega\circ J.
$$
By the formula $J=K_2\circ K_1^{-1}$ we know that $J$ is real analytic, as a composition of real analytic mappings. 
We have that
$$
H_1(x,y)=H_2(J(x),J(y))
$$
for $x$ near $x_1$ and $y\in \Omega$. (Recall that $F=J$ on $\Omega\subset \widehat{U}$.)
By real analyticity of the both sides in the $y$ variable we have that this holds for $x,y$ near $x_1$. 

By Lemma~\ref{H_implies_conformal} below, we have that $J$ satisfies the equation of conformal mapping
$$
g_1(x)=\hat{c}(x)J^*g_2(x)
$$
near $x_1$ with conformal factor
$$
\hat{c}(x)=\left(\frac{P_2(J(x))}{P_1(x)}\right)^{\frac{4}{n-2}}.
$$
Since the conformal factor $\hat{c}$ satisfies the bounds~\eqref{uplowbnd} we have that $x_1$ in $D_1$. This concludes the proof. We have found a mapping $J$ that extends the conformal mapping $F:U\to F(U)$ into a global conformal mapping.
\end{proof}

Finally, we combine our results to prove our main theorem, Theorem~\ref{mainthm}.
\begin{proof}[Proof of Theorem~\ref{mainthm}]
By Propositions~\ref{det_near_bndr} and \ref{near_whole_bndr} we can find a local conformal mapping $F:U\to F(U)$ where $U$ is a neighborhood of $\p M$ in $M_1$ and $F(U)$ is a neighborhood of $\p M$ in $M_2$. By Propositions~\ref{Gs_agree} and~\ref{G_to_H} we have
 $$
  H_1(x,y)=H_2(F(x),F(y)), \quad (x,y)\in \widehat{U}\times\widehat{U}\setminus \{x=y\}.
 $$
 Theorem~\ref{extension_argument} above now concludes the proof.
\end{proof}

We are left to prove the lemmas used in the proof of Theorem~\ref{extension_argument}.
\begin{Lemma}\label{H_implies_conformal}
 Let $U_i \subset M_i$ be open, and let $J:(U_1,g_1)\to (U_2,g_2)$ be a diffeomorphism satisfying
 $$
 H_1(x_1,y)=H_2(J(x_1),J(y))
 $$
in $(U_1 \times U_1) \setminus \{ (x, x) \,;\, x \in U_1\}$. Then $J$ is conformal on $U_1$, $g_1=cJ^*g_2$, and the conformal factor is given by
 $$
 c(x_1)=\left(\frac{P_2(J(x_1))}{P_1(x_1)}\right)^{\frac{4}{n-2}}, \quad x_1\in U_1.
 $$
\end{Lemma}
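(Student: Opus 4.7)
The plan is to extract the pointwise conformal factor from the leading-order singularity of the scaled Green's function $H$ on the diagonal, as quantified by Lemma~\ref{H_basic}. Fix $x_1 \in U_1$ and choose any tangent vector $v \in T_{x_1} U_1$ with $|v|_{g_1} = 1$. For small $t > 0$ set $y_t = \exp^{g_1}_{x_1}(tv) \in U_1$, so that $d_1(x_1, y_t) = t$. Because $J$ is a $C^1$-diffeomorphism and $|DJ(x_1)v|_{g_2} > 0$, the image curve satisfies
\[
d_2(J(x_1), J(y_t)) = t\, |DJ(x_1)v|_{g_2} + o(t) \qquad \text{as } t \to 0.
\]

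Next I would plug the identity $H_1(x_1,y_t) = H_2(J(x_1), J(y_t))$ into the asymptotics supplied by Lemma~\ref{H_basic} and divide both sides by $t^{2-n}$. Using $\lim_{y \to x} H(x,y)/d(x,y)^{2-n} = c(n)/P(x)^2$ gives
\[
\frac{c(n)}{P_1(x_1)^2} = \lim_{t \to 0^+} \frac{H_2(J(x_1), J(y_t))}{t^{2-n}} = \frac{c(n)}{P_2(J(x_1))^2} \, |DJ(x_1) v|_{g_2}^{2-n}.
\]
Solving yields
\[
|DJ(x_1) v|_{g_2} = \left(\frac{P_1(x_1)}{P_2(J(x_1))}\right)^{\frac{2}{n-2}} =: \lambda(x_1)
\]
for every $g_1$-unit vector $v$.

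Since $\lambda(x_1)$ is independent of $v$, the linear map $DJ(x_1) : (T_{x_1}U_1, g_1) \to (T_{J(x_1)}U_2, g_2)$ sends the $g_1$-unit sphere to a $g_2$-sphere of radius $\lambda(x_1)$. This is equivalent to the conformality relation $J^* g_2 \big|_{x_1} = \lambda(x_1)^2\, g_1\big|_{x_1}$, which reads
\[
g_1 = \lambda(x_1)^{-2}\, J^* g_2 = \left(\frac{P_2(J(x_1))}{P_1(x_1)}\right)^{\frac{4}{n-2}} J^* g_2.
\]
Since $x_1 \in U_1$ was arbitrary, this holds on all of $U_1$ with the prescribed conformal factor $c$.

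I expect no substantial obstacle: the whole argument is a one-line extraction of the leading $t^{2-n}$ coefficient of a product of two asymptotic expansions. The only mild subtlety is to ensure that the $o(t)$ correction in $d_2(J(x_1), J(y_t))$ does not contribute at leading order when raised to the power $2-n$; this follows from $|DJ(x_1)v|_{g_2} > 0$ (guaranteed by $J$ being a diffeomorphism), since then $t \,|DJ(x_1)v|_{g_2} + o(t) = t(|DJ(x_1)v|_{g_2} + o(1))$ and raising to the power $2-n$ preserves the limit.
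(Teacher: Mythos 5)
Your proof is correct and follows essentially the same route as the paper: plug the diagonal asymptotics of $H$ from Lemma~\ref{H_basic} into the identity $H_1 = H_2 \circ (J \times J)$ along geodesic rays, extract the leading $t^{2-n}$ coefficient to obtain $|DJ(x_1)v|_{g_2}$ independent of the $g_1$-unit vector $v$, and then pass from the equality of quadratic forms to the conformality of $DJ(x_1)$ via polarization (which you phrase geometrically as sending the unit sphere to a sphere). The paper's write-up is the same argument compressed to a few displayed lines, so there is no gap or meaningful divergence to report.
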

\begin{proof}
By the behavior (see Lemma~\ref{H_basic}) of $H_1(x_1,y)$ and $H_2(J(x_1),J(y))$ when $y$ is near $x_1$, we see that
$$
P_1(x_1)^2d_1(x_1,y)^{n-2}=P_2(J(x_1))^2d_2(J(x_1),J(y))^{n-2} + o(d_1(x_1,y)^{n-2}).
$$
Let $V\in T_{x_1} U_1$ and let $y(t)$ be a $g_1$-geodesic with $y(0)=x_1$ and $\dot{y}(0)=V$. We have
$$
P_1(x_1)^2|V|_{g_1(x_1)}^{n-2}=P_2(J(x_1))^2|J_*V|_{g_2(J(x_1))}^{n-2} + o(t^{n-3}).
$$
Thus, by the elementary polarization identity, we have that 
$$
g_1(x_1)=c(x_1)J^*g_2(x_1),
$$
where
\[
c(x_1)=\left(\frac{P_2(J(x_1))}{P_1(x_1)}\right)^{\frac{4}{n-2}}. \qedhere
\]
\end{proof}

\begin{Lemma}\label{local_equiv_pf}
 In the setting of the proof of Theorem~\ref{mainthm} above
 \begin{equation}\label{local_equiv2}
\mH_1^\Omega(z_1)=\mH_2^\Omega(z_2) \mbox{ if and only if } \mH_1(z_1)=\mH_2(z_2).
\end{equation}
\end{Lemma}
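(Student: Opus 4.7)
The plan is to establish the two implications separately. The ``if'' direction is immediate: $\mH_i^\Omega(z_i)$ is by construction the restriction of the distribution $\mH_i(z_i) \in H^s(\widehat{U})$ to the open subset $\Omega \subset \widehat{U}$, and restriction to an open subset preserves equality of distributions.

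For the ``only if'' direction, the plan is to exploit that both $\mH_1(z_1)$ and $\mH_2(z_2)$ are represented by concrete $L^1_{\mathrm{loc}}(\widehat{U})$ functions with well-understood singular structure. By Lemma~\ref{H_basic}, the function $y \mapsto H_1(z_1,y)$ is real analytic on $\widehat{U}\setminus\{z_1\}$ and has at worst a $d_1(z_1,y)^{2-n}$ singularity there, which is locally integrable since $n \geq 3$; similarly $y \mapsto H_2(z_2,F(y))$ is real analytic on $\widehat{U}\setminus\{F^{-1}(z_2)\}$ with the same integrable type of singularity, using that $F$ is a real analytic diffeomorphism on the interior part of $U$ (which follows from the corollary preceding this lemma, applied to the conformal diffeomorphism $F$ between locally conformally real analytic manifolds).

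Assuming $\mH_1^\Omega(z_1) = \mH_2^\Omega(z_2)$, the two $L^1_{\mathrm{loc}}$ representatives agree almost everywhere on $\Omega$, hence pointwise on the open dense subset $\Omega \setminus \{z_1, F^{-1}(z_2)\}$ where both are real analytic. Shrinking $U$ if necessary so that $\widehat{U}$ is a connected collar neighborhood of $\partial M$, the set $\widehat{U} \setminus \{z_1, F^{-1}(z_2)\}$ is also connected since $n \geq 3$, so removing finitely many points does not disconnect it. Real analytic continuation then promotes this local agreement to equality on all of $\widehat{U} \setminus \{z_1, F^{-1}(z_2)\}$, i.e.\ almost everywhere on $\widehat{U}$. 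Because both representatives are in $L^1_{\mathrm{loc}}(\widehat{U})$, this a.e.\ equality lifts to equality as distributions on $\widehat{U}$, and hence as elements of $H^s(\widehat{U})$, yielding $\mH_1(z_1) = \mH_2(z_2)$.

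The main obstacle is the real analytic continuation argument: it requires both that $\widehat{U}$ be connected (arranged by the choice of $U$) and that the at most two singular points $z_1$ and $F^{-1}(z_2)$ not prevent the extension from $\Omega$ to $\widehat{U}$; the latter is automatic in dimension $n \geq 3$. The $L^1_{\mathrm{loc}}$ framework then absorbs any singular behavior at these two points without extra bookkeeping, since a.e.\ equality of $L^1_{\mathrm{loc}}$ functions is the same as equality in $\mathcal{D}'(\widehat{U})$.
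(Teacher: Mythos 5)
Your proof is correct, and it takes a genuinely different (and more economical) route than the paper. Both arguments begin the same way: from agreement of the restrictions on $\Omega$, real analytic continuation (valid because $\widehat U \setminus \{z_1, F^{-1}(z_2)\}$ remains connected when $n\geq 3$) forces pointwise agreement of $H_1(z_1,\cdot)$ and $H_2(z_2,F(\cdot))$ off the at most two singular points. Where you diverge is in how you pass from pointwise agreement off a null set to equality in $H^s(\widehat U)$. The paper treats $u = H_1(z_1,\cdot)-H_2(z_2,F(\cdot))$ as an abstract distribution that might a priori charge $\{z_1\}$, deduces $F(z_1)=z_2$ from the diagonal asymptotics of Lemma~\ref{H_basic}, and then performs an explicit computation showing $L_{g_1}\bigl(P_1(z_1)P_1(\cdot)u(\cdot)\bigr)=0$ in $\mathcal D'(\widehat U)$ (the two delta masses cancel), before invoking elliptic regularity and unique continuation to conclude $u\equiv 0$. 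You instead observe that $\mH_i(z_i)$ are by construction the $L^1_{\mathrm{loc}}$ functions $H_1(z_1,\cdot)$ and $H_2(z_2,F(\cdot))$ (the $d(\cdot,\cdot)^{2-n}$ singularity is integrable when $n\geq 3$), so a.e.\ agreement on $\widehat U$ is already equality in $\mathcal D'(\widehat U)$ and hence in $H^s(\widehat U)$ — no delta bookkeeping and no unique continuation required. This is cleaner and avoids the PDE machinery; you also do not need the intermediate step $F(z_1)=z_2$ as an explicit lemma (it is still a consequence, but not a logical ingredient). Two small cosmetic points: the real analyticity of $F$ on the interior should be attributed to Proposition~\ref{confComeg} (or to the real analytic $C^\omega$-structure from Proposition~\ref{Comeg_str}) rather than to the corollary on joint real analyticity of $H$; and the paper already arranges that $\widehat U$ (indeed $\Omega$) is connected by the choice of $\eps$ and $\delta$, so "shrinking $U$" is unnecessary, though harmless.
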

\begin{proof}
Note that if
$$
H_1(z_1,y)=H_2(z_2,F(y)), \quad y\in \Omega,
$$
then by real analyticity of $H_1(z_1,\cdot)$ and $H_2(z_2,F(\cdot))$ we have the above for $y\in \widehat{U}\setminus\{z_1, F^{-1}(z_2)\}$. We also must have that $F(z_1)=z_2$, due to the diagonal behavior of the functions $H_i$ (see Lemma~\ref{H_basic}). Thus the function
$$
u=H_1(z_1,\cdot)-H_2(z_2,F(\cdot))
$$
is identically zero outside its singular support $\{z_1\}$. We argue that $u$ is actually the zero distribution. We calculate using $g_1=cF^*g_2$ holding on $\widehat{U}$, together with identity $P_1(x)=c(x)^{-\frac{n-2}{4}}P_2(F(x))$ of Proposition~\ref{G_to_H}, as follows
\begin{align*}
L_{g_1}P_1(z_1)&P_1(\cdot)u(\cdot)=L_{g_1}G_1(z_1,\cdot)\\ 
&-c(z_1)^{-\frac{n-2}{4}}L_{g_1}\left[P_2(F(z_1))c(\cdot)^{-\frac{n-2}{4}}P_2(F(\cdot))H_2(F(z_1),F(\cdot))\right] \\
&=\delta_{g_1,z_1}-c(z_1)^{-\frac{n-2}{4}}L_{cF^*{g_2}}\left[c(\cdot)^{-\frac{n-2}{4}}G_2(F(z_1),F(\cdot))\right] \\
&=\delta_{g_1,z_1}-c(z_1)^{-\frac{n-2}{4}}c(\cdot)^{-\frac{n+2}{4}}L_{F^* g_2}G_2(F(z_1),F(\cdot)) \\
&=\delta_{g_1,z_1}-c(z_1)^{-\frac{n-2}{4}} c(\cdot)^{-\frac{n+2}{4}} F^*(\delta_{g_2,F(z_1)}) =\delta_{g_1,z_1}-\delta_{g_1,z_1}=0.
\end{align*}
In the second to last equality we have used again that $g_1=cF^*g_2$. The above holds on $\widehat{U}$. We conclude by unique continuation~\cite{Isakov_book} that $P_1(z_1)P_1(x)u(x)$, and thus $u(x)$, are indeed the zero distributions. The equivalence~\eqref{local_equiv2} follows.
\end{proof}

\appendix

\section{Real analytic structure} \label{section_realanalytic_structure}

We show that a locally conformally real analytic manifold (without boundary) admits a real analytic structure. The proof is by using $n$-harmonic coordinates~\cite{LS1}. We also show that if a conformally real analytic manifold has a boundary, then the boundary is real analytic as a Riemannian manifold. In addition, we record the fact that a conformal mapping between locally conformally real analytic manifolds is real analytic. 

An $n$-harmonic coordinate system is a conformal generalization of harmonic coordinate system and also a generalization of isothermal coordinates. We refer to~\cite{LS1,LS2,JLS} on details on $n$-harmonic coordinates.

\begin{Proposition}\label{Comeg_str}
 Let $(M,g)$ be a locally conformally real analytic manifold \emph{without} boundary whose transition functions are $C^\infty$ smooth (i.e. $M$ as a manifold is $C^\infty$ smooth). Then $M$ admits a real analytic $C^\omega$-structure compatible with the original $C^\infty$-structure. 
 
 The $C^\omega$-structure can be given by transforming from coordinates where the metric is locally conformally real analytic to $n$-harmonic coordinates. The metric is conformally real analytic in the coordinates of the new atlas.
\end{Proposition}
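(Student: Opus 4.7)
\medskip

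\noindent\textbf{Proof proposal for Proposition \ref{Comeg_str}.}

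The plan is to use $n$-harmonic coordinates as the atlas on $M$. Recall that $u$ is $n$-harmonic with respect to $g$ if $\Delta_n^g u := \mathrm{div}_g(|\nabla u|_g^{n-2}\nabla u) = 0$, and that in dimension $n$ the $n$-Dirichlet energy $\int_M |\nabla u|_g^n\,dV_g$ is conformally invariant. Consequently, if $\tilde g = c g$ with $c > 0$, then $u$ is $n$-harmonic with respect to $\tilde g$ if and only if it is $n$-harmonic with respect to $g$. This conformal invariance is the bridge that lets us replace the smooth conformal factor $s$ by the real-analytic representative $h$ when computing these coordinates.

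Fix $p \in M$ and choose a locally conformally real analytic chart $\phi : U \to \Omega \subset \mathbb R^n$ around $p$, so that $\phi^{-1*}g = s h$ with $s \in C^\infty(\Omega)$, $h \in C^\omega(\Omega)$. By \cite{LS1} there exists a smooth $n$-harmonic coordinate chart $X = (X^1,\dots,X^n)$ centered at $p$; by the conformal invariance noted above, the coordinate functions $X^i$ satisfy $\Delta_n^h (X^i\circ\phi^{-1}) = 0$ on a neighborhood of $\phi(p)$ in $\mathbb R^n$, and because $X$ is a chart the differentials $dX^i$ are linearly independent at $p$, in particular each $\nabla (X^i\circ\phi^{-1})$ is nonvanishing near $\phi(p)$. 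The key technical step is to upgrade each $X^i\circ\phi^{-1}$ from $C^\infty$ to $C^\omega$. At a point where $\nabla u \neq 0$ the $n$-Laplace equation is a (non-degenerate) quasilinear elliptic equation, and with real-analytic metric coefficients one can apply the classical real-analytic elliptic regularity theorem of Morrey--Nirenberg \cite{MoNi} to the linearization around $X^i\circ\phi^{-1}$; a standard bootstrap then gives that $X^i\circ\phi^{-1}$ is real-analytic on a neighborhood of $\phi(p)$, so $X : U \to \mathbb R^n$ is a real-analytic diffeomorphism onto its image after possibly shrinking $U$. This is the step I expect to be the main technical obstacle, since the $n$-Laplacian is degenerate away from the nonvanishing-gradient regime; the argument essentially reduces to the observation that at any point of our $X$-chart we are automatically in the non-degenerate regime, together with the fact that regularity for the linearization with analytic coefficients is classical.

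Next I would check that the collection of all such $n$-harmonic charts $\{(U_\alpha, X_\alpha)\}$ forms a $C^\omega$-atlas compatible with the given $C^\infty$-structure on $M$. Given two overlapping $n$-harmonic charts $X_\alpha$ and $X_\beta$, each component $X_\alpha^i$ is, by the conformal invariance, $n$-harmonic with respect to the $\beta$-coordinate representation of some lcra metric $s_\beta h_\beta$ in its domain, hence $n$-harmonic with respect to the analytic metric $h_\beta$; the nonvanishing-gradient argument from the previous paragraph then shows $X_\alpha \circ X_\beta^{-1} \in C^\omega$. Both charts are smooth in the original $C^\infty$-atlas, so the new $C^\omega$-structure is compatible with the $C^\infty$-structure.

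Finally, in an $n$-harmonic chart $X$ near $p$, write $T = X \circ \phi^{-1} : \Omega \to X(U)$, which by Step~2 is a real-analytic diffeomorphism. Then
\[
X^{-1*}g = T_*(\phi^{-1*}g) = T_*(sh) = (s\circ T^{-1})\,(T_* h),
\]
and since $T_* h = (T^{-1})^*h$ is real-analytic (pushforward of an analytic tensor by an analytic diffeomorphism), while $s\circ T^{-1}\in C^\infty(X(U))$, the metric is still conformally real-analytic in the new chart. This gives the desired $C^\omega$-structure with the additional conclusion on the metric. The only subtle point throughout is the real-analyticity of solutions to the $n$-Laplace equation with analytic metric at points where the gradient does not vanish; once this is in hand the rest is bookkeeping.
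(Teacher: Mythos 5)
Your proposal follows essentially the same route as the paper: pass to $n$-harmonic coordinates, use conformal invariance of the $n$-Laplacian to reduce to the real-analytic representative $h$, invoke analyticity of solutions at points of nonvanishing gradient, and then check that transition maps between two such charts solve an $n$-harmonic equation with $C^\omega$ coefficients. The structure of the argument, including the observation that $dX^i\neq 0$ puts you automatically in the non-degenerate regime, matches the paper.

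The one substantive discrepancy is in the key analyticity step, which you yourself flag as the main obstacle. You propose to apply Morrey--Nirenberg \cite{MoNi} ``to the linearization'' and then ``bootstrap.'' That does not work as stated: \cite{MoNi} treats \emph{linear} elliptic systems, and analyticity of the solution $u$ of a quasilinear equation does not follow from analyticity of solutions of its linearization --- the linearized equation has coefficients depending on $Du$, whose regularity is exactly what is in question, so the bootstrap is circular. What is actually needed is an analyticity theorem for \emph{quasilinear} elliptic equations with real-analytic structure, applied in the region where the equation is uniformly elliptic. The paper handles this by writing $\Delta_n u = \partial_j F^j(x,du) = 0$ with $F^j$ real-analytic on $\Omega\times(\mathbb{R}^n\setminus B_\eps(0))$, computing the symbol $A(\xi)=a^{ij}\xi_i\xi_j$ of the coefficient matrix $a^{ij}=\partial_{p_j}F^i$, verifying the ellipticity lower bound $A(\xi)\gtrsim |p|^2|\xi|^2$, and then invoking Koch--Nadirashvili \cite[Corollary 1.4]{KN}, which is tailored to precisely this setting. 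So your high-level idea (nonvanishing gradient $\Rightarrow$ uniformly elliptic $\Rightarrow$ analytic) is right, but the justification needs the quasilinear analyticity theorem, not the linear one plus a bootstrap; this is the crux of the proof rather than a citation detail.

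The remainder --- forming the atlas, checking $C^\omega$ compatibility of transitions by running the same argument on the metric $X_\beta^{-1*}g$ (which you have just shown to be conformally real analytic), and the final computation $X^{-1*}g = (s\circ T^{-1})\,T_*h$ --- matches the paper and is fine.
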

\begin{proof}
We only need to show that we can replace the $C^\infty$ smooth transition functions of $M$ by real analytic ones by transforming to $n$-harmonic coordinates. For this let $\phi$ and $\tilde{\phi}$ be two coordinate charts whose domains contain a point $x_0 \in M$ and the metric $g$ of $M$ is conformally real analytic in both of these coordinates.
 
 Let $T$ and $\widetilde{T}$ be transition functions from these coordinates to $n$-harmonic coordinates near $x_0$. We show that these mappings are $C^\omega$ smooth since their components satisfy a quasilinear elliptic equation on $\R^n$: 
 $$
 \Delta_nT^k=-\abs{g}^{-1/2} \partial_j (\abs{g}^{1/2} g^{jm} (g^{ab} \partial_a T^k \partial_b T^k)^{\frac{n-2}{2}} \partial_m T^k)=0, \ k=1,\ldots,n.
 $$
 See~\cite{LS1} for details on the $n$-harmonic equation. (Similarly for $\widetilde{T}$.) In~\cite{LS1} it is also proven that each $T^k$ is $C^\infty$ smooth.
 
 We establish the assumptions of~\cite[Corollary 1.4]{KN} for a $C^\omega$ regularity result for these type of equations with $C^\omega$ coefficients in our case where each $dT^k\neq 0$. The latter holds since otherwise the Jacobian determinant of $T$ would be zero contradicting the invertibility of $T$. 
 
 For the reference~\cite[Corollary 1.4]{KN}, we set for simplicity of the notation $T^k=u$, and  consider the above as a nonlinear elliptic equation: 
 $$
 \sum_j \p_j F^j(x,d u)=0,
 $$
 on $U=\Omega\times (\R^n\setminus B_\eps(0))\subset \R^n\times \R^{n}$, with
 $$
 F^j(x,p)=\abs{g}^{1/2} g^{jm} (g^{ab} p_a p_b)^{\frac{n-2}{2}} p_m.
 $$
 Here $\Omega$ is the domain of each $T^k$ and we have restricted each $F^j$ to $\R^n\setminus B_\eps(0)$, $\eps>0$, by the facts that each $du=dT^k\neq 0$ and that $dT^k$ continuous on $\Omega$. (We could also consider that the mapping $T$ satisfy an nonlinear elliptic system in the spirit of~\cite{KN}, but since the equations for $T^k$ are uncoupled, it makes no difference.)

 In the notation of~\cite[Corollary 1.4]{KN}, we set
 \begin{align*}
 a^{ij}=\p_{p_j}F^i(x,p)&=|g|^{1/2}g^{im} \Big(\frac{n-2}{2} (g^{ab}p_ap_b)^{\frac{n-4}{2}}g^{cd}(\delta^{j}_cp_d+\delta^{j}_dp_c)p_m \\
 &\quad \quad \quad \quad +(g^{ab}p_ap_b)^{\frac{n-2}{2}}\delta_m^j\Big) \\
 &=|g|^{1/2}(g^{ab}p_ap_b)^{\frac{n-4}{2}}\Big((n-2)g^{jd}p_dg^{im}p_m+ (g^{ab}p_ap_b)g^{ij}\Big).
 \end{align*}
 Here, as usual, repeated indices are summed over. We define a (scalar) symbol $A(\xi)$, for $0\neq \xi\in \R^n$, as in~\cite{KN}:
 $$
 A(\xi)=a^{ij}\xi_i\xi_j=|g|^{1/2}(g^{ab}p_ap_b)^{\frac{n-4}{2}}\big((n-2)(p_ag^{ai}\xi_i)^2+(g^{ab}p_ap_b)g^{ij}\xi_i\xi_j\big).
 $$
 We trivially have
 $$
 A(\xi)\geq |g|^{1/2}(g^{ab}p_ap_b)^{\frac{n-4}{2}}|p|^2|\xi|^2
 $$
 and thus the assumptions~\cite[Definition 1.1]{KN} are satisfied on $U$, where $|p|\geq \eps$. Thus $u=T^k$, and consequently $T$, are real analytic.
 
 
 We have that the components
 $$
 \widetilde{T}^k\circ \tilde{\phi}\circ \phi^{-1}\circ T, \quad k=1,\ldots,n,
 $$
 of the mapping $(\widetilde{T}\circ \tilde{\phi})\circ (T\circ \phi)^{-1}$ also satisfy an $n$-harmonic equation with $C^\omega$ coefficients in a suitable subset of $\R^n$: first we see that
 \begin{align*}
 \Delta^n_{T^{-1*}\phi^{-1*}g}&\widetilde{T}^k\circ \tilde{\phi}\circ \phi^{-1}\circ T^{-1}=(\tilde{\phi}\circ \phi^{-1}\circ T^{-1})^*\Delta^n_{\tilde{\phi}^{-1*}g}\widetilde{T}^k=0,
 \end{align*}
 where $\Delta^n$ is the $n$-Laplacian~\cite{LS1}. Then we note that
 $$
 T^{-1*}\phi^{-1*}g=s|_{T^{-1}} T^{-1*}h,
 $$
 since $\phi$ was such that $\phi^{-1*}g=s\, h$ with $s\in C^\infty$ and $h\in C^\omega$,
 and use this with the fact that the $n$-harmonic equation is invariant under conformal scalings~\cite{LS1}. Thus
 $$
 \Delta^n_{T^{-1*}h}\widetilde{T}^k\circ \tilde{\phi}\circ \phi^{-1}\circ T^{-1}=0,
 $$
 where $T^{-1*}h\in C^{\omega}$. As already mentioned above this can be seen as a quasilinear elliptic equation~\cite{LS1} for $\widetilde{T}^k\circ \tilde{\phi}\circ \phi^{-1}\circ T^{-1}$ with $C^\omega$ coefficients. Thus it follows that the transition function $(\widetilde{T}\circ \tilde{\phi})\circ (T\circ \phi)^{-1}$ is $C^\omega$ regular~\cite[Corollary 1.4]{KN}.
 
 Defining new coordinate charts as $\phi'=T\circ \phi$ and $\tilde{\phi}'=\widetilde{T}\circ \tilde{\phi}$, and extending this procedure over all charts of $M$ where the metric is conformally real analytic gives an $C^\omega$-structure for $M$, which is $C^\infty$ compatible with the original atlas. (The latter follows since $T$ and $\widetilde{T}$ are actually even $C^\omega$ local diffeomorphisms of $\R^n$ as noted above.)
 
 In the new coordinates the metric is still conformally real analytic:
 $$
 \phi'^{-1*}g=s' h', \quad s'\in C^\infty \mbox{ and } h'\in C^\omega,
 $$
 and similarly for $\tilde{\phi}'$.
\end{proof}

\begin{Proposition}\label{bndr_comeg_trans}
Let $(M,g)$ be a locally conformally real analytic manifold \emph{with} boundary and let $g_{\p M}$ be the induced metric on the boundary $\p M$.

 Let $x'=(x^1,\ldots,x^{n-1})$ be $g_{\p M}$-harmonic coordinates on a neighborhood $\Gamma$ of $p\in \p M$ \emph{on} the boundary, and let $\phi=(\phi^1,\ldots,\phi^{n-1},\phi^n)$ be coordinates on $U\subset M$ near the boundary point $p$ such that
 $$
 \phi^{-1*}g=s h, \mbox{ with } s\in C^{\infty}(\Omega) , h\in C^\omega(\Omega),
 $$
  $$
 s|_{\Omega \cap \{ x_n = 0 \} } \in C^{\omega}(\Omega \cap \{ x_n = 0 \}).
 $$
 Here $\Omega=\phi(U)\subset \mathbb{H}^n$. Denote $\Sigma=\Omega \cap \{ x_n = 0 \}$ and denote $\phi'=(\phi^1,\ldots,\phi^{n-1}):U\cap \p M \to \Sigma$.
 
 Then, the induced transition function $S=x'\circ\phi'^{-1}: \Sigma\to x'(\Gamma)$ is real analytic. In particular, changing to $g|_{\p M}$-harmonic coordinates on the boundary gives $\p M$ a real analytic atlas. The coordinate representations of $g_{\p M}$ in the coordinates of this atlas are real analytic.  
\end{Proposition}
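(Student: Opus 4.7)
The plan is to exploit the fact that on the boundary, both the conformal factor $s$ and the real-analytic representative $h$ are assumed to be real analytic, so that the induced metric $g_{\partial M}$ is \emph{actually} real analytic (and not merely conformally real analytic) in the tangential coordinates $\phi'$. Once that is in hand, the result follows from the standard real-analytic elliptic regularity theorem applied to the harmonic coordinate functions.

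First I would write out the components of the induced metric in $\phi'$-coordinates: from $g_{ij}=s(x)h_{ij}(x)$ in $\phi$-coordinates, pulling back to $\partial M$ along the inclusion gives
\[
(g_{\partial M})_{\alpha\beta}(x') \;=\; s(x',0)\, h_{\alpha\beta}(x',0), \qquad \alpha,\beta = 1,\ldots, n-1.
\]
By hypothesis $s|_{\Sigma}\in C^\omega(\Sigma)$ and $h_{\alpha\beta}\in C^\omega(\Omega)$, so $h_{\alpha\beta}(x',0)\in C^\omega(\Sigma)$ as well, and therefore the matrix field $(g_{\partial M})_{\alpha\beta}$ is real analytic on $\Sigma$ in $\phi'$-coordinates. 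Consequently the Laplace--Beltrami operator $\Delta_{g_{\partial M}}$, expressed in $\phi'$-coordinates as
\[
\Delta_{g_{\partial M}} = \frac{1}{\sqrt{|g_{\partial M}|}}\partial_{\alpha}\!\left( \sqrt{|g_{\partial M}|}\,(g_{\partial M})^{\alpha\beta}\partial_{\beta}\,\cdot\,\right),
\]
is a linear second-order elliptic operator with real-analytic coefficients.

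Next I would use the defining property of $g_{\partial M}$-harmonic coordinates: each $x^l$, $l=1,\ldots,n-1$, is a $C^\infty$ solution of $\Delta_{g_{\partial M}} x^l = 0$ on $\Gamma$. Applying the Morrey--Nirenberg theorem~\cite{MoNi} in $\phi'$-coordinates then upgrades the smooth functions $x^l\circ\phi'^{-1}$ to real analytic ones on $\Sigma$. Hence the transition $S = x'\circ\phi'^{-1}:\Sigma\to x'(\Gamma)$ is a real-analytic map; since it is simultaneously a $C^\infty$-diffeomorphism (as a transition between two smooth coordinate systems), the real-analytic inverse function theorem makes $S^{-1}$ real analytic as well, so $S$ is a real-analytic diffeomorphism.

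For the final two claims, covering $\partial M$ by $g_{\partial M}$-harmonic charts and factoring the transition between any two such harmonic charts through a conformally real-analytic $\phi$-chart (applying the argument twice) shows that all transitions in the harmonic atlas are real analytic, giving $\partial M$ a $C^\omega$-structure compatible with its smooth structure. Pulling back the real-analytic coordinate representation of $g_{\partial M}$ in $\phi'$-coordinates through the real-analytic diffeomorphism $S^{-1}$ then yields that the metric components in the new harmonic coordinates $x'$ are likewise real analytic. There is no real obstacle here: the only point that needs highlighting is the specific hypothesis in the definition of a locally conformally real-analytic manifold with boundary, namely that $s$ itself (and not merely $sh$) is real analytic \emph{on} $\Sigma$; this is exactly what makes $g_{\partial M}$ genuinely real analytic in $\phi'$-coordinates and permits the direct application of elliptic real-analytic regularity.
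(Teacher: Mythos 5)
Your proof is correct and takes essentially the same route as the paper's: both extract the tangential block of $h$ so that $\phi'^{-1*}g_{\p M}=s|_\Sigma h^{(n-1)}|_\Sigma$ has real analytic components, pull back the harmonic-coordinate equation via coordinate invariance of $\Delta_{g_{\p M}}$ to view $S^l$ as a solution of a linear elliptic equation with $C^\omega$ coefficients, and then invoke real-analytic elliptic regularity. Your explicit highlighting of the role of the hypothesis $s|_\Sigma\in C^\omega(\Sigma)$ and the use of the real-analytic inverse function theorem is a welcome elaboration of points the paper leaves implicit.
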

\begin{proof}
 Let $p\in \p M$ and let $x'$ and $\phi$ be as in the assumption with
 $$
 \phi^{-1*}g=sh.
 $$
Now, $\phi'=(\phi^1,\ldots,\phi^{n-1})$ is a coordinate chart on $U\cap \p M$ (by the definition of boundary chart). 

Let us decompose the matrix field $h$ as
$$
h=\left[\begin{matrix}
h^{(n-1)} & h_{\#n} \\
h_{n\#} & h_{nn}
\end{matrix}\right],
$$
where $h^{(n-1)}$ is the $(n-1)\times (n-1)$ upper left block of whole $h$-matrix field. The coordinate representation of the boundary metric $g_{\p M}$ in $\phi'$-coordinates reads 
$$
\phi'^{-1*}g_{\p M}=s|_\Sigma h^{(n-1)}|_\Sigma.
$$
 
 
 Let $S=x'\circ\phi'^{-1}$ be the transition function $\Sigma\to x'(\Gamma)$. We have
  $$
 \phi'^*\Delta_{s|_{\Sigma}h^{n-1}|_{\Sigma}}S^l=\Delta_{\phi'^*\left(s|_{\Sigma}h^{n-1}|_{\Sigma}\right)}x^l=\Delta_{g_{\p M}} x^l=0, \quad l=1,\ldots n.
 $$
 (Here ``$\Delta$'' is the $(n-1)$-dimensional Laplace-Beltrami operator.)
 Thus, each component function $S^l$ of $S$ is real analytic as a solution of an elliptic equation with real analytic coefficients, see e.g.~\cite[Appendix J]{Besse}. That transforming to harmonic coordinates gives $\p M$ a real analytic atlas follows as in the proof of previous proposition.
 \end{proof}

\begin{Proposition}\label{confComeg}
 A conformal mapping $F$ between locally conformally real analytic manifolds $(M,g)$ and $(N,h)$ is a real analytic diffeomorphism between the interiors of the manifolds.
\end{Proposition}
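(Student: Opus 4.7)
The plan is to work entirely in the interiors and to exploit the conformal invariance of the $n$-Laplacian, essentially repeating the bootstrapping argument from the proof of Proposition~\ref{Comeg_str}. By that proposition, applied to the interiors of $M$ and $N$, we may equip both $\Int(M)$ and $\Int(N)$ with real analytic atlases consisting of $n$-harmonic coordinate systems; in such coordinates the metrics take the form $g=\tilde{s}\,\tilde{g}$ and $h=\tilde{s}_h\,\tilde{h}$ with $\tilde{g},\tilde{h}\in C^\omega$ and the conformal factors $\tilde{s},\tilde{s}_h$ merely smooth.

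Fix $p\in\Int(M)$ and let $\phi,\psi$ be $n$-harmonic charts around $p$ and $F(p)$ respectively in these atlases. Write $T=\psi\circ F\circ\phi^{-1}$ for the coordinate representation of $F$, and let $y^k=\psi^k$ denote the target coordinate functions. Each $y^k$ is $n$-harmonic on $(N,h)$ by construction. Since $F$ is conformal, the conformal invariance of the $n$-Laplacian (see \cite{LS1}) gives that $y^k\circ F$ is $n$-harmonic on $(M,g)$; equivalently, in $\phi$-coordinates each component $T^k$ solves the quasilinear elliptic equation
\[
\Delta_n^{\phi^{-1*}g}\,T^k = 0.
\]
Because the $n$-Laplacian depends only on the conformal class of the metric, this is the same equation as $\Delta_n^{\tilde{g}}\,T^k=0$, and hence has real analytic coefficients.

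The conclusion then follows from the Kinderlehrer--Nirenberg regularity theorem \cite[Corollary 1.4]{KN}, applied in the same way as in the proof of Proposition~\ref{Comeg_str}. The required nondegeneracy $dT^k\neq 0$ holds automatically: since $F$ is a diffeomorphism, the Jacobian of $T$ is invertible at every point, and so no $dT^k$ vanishes. This yields $T^k\in C^\omega$, and hence $F$ is real analytic on $\Int(M)$. The main technical point, namely verifying uniform ellipticity of the $n$-harmonic operator on the set $\{dT^k\neq 0\}$ so that \cite{KN} is applicable, was already carried out in the proof of Proposition~\ref{Comeg_str} and transfers to the present setting verbatim; I do not expect any additional obstacle.
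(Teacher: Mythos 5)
Your proof is correct and takes essentially the same approach as the paper: reduce to $n$-harmonic coordinates, use the conformal invariance of the $n$-Laplacian to conclude that the component functions of $F$ solve a quasilinear elliptic equation with real-analytic coefficients, and then invoke the Kinderlehrer--Nirenberg regularity theorem. You have made explicit a point the paper leaves terse (that the target coordinates must be taken $n$-harmonic, not merely locally conformally real analytic, so that the pullbacks $y^k\circ F$ satisfy the $n$-Laplace equation), and your justification that $dT^k\neq 0$ via invertibility of the Jacobian of the diffeomorphism $T$ is correct.
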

\begin{proof}
The proof is similar to the one used to prove smoothness of conformal mappings in~\cite{LS1}. Let $p\in M$ and let $\phi'$ and $\tilde{\phi}'$ be coordinates near $p$ and $F(p)$ such that the metrics $g$ and $h$ are locally conformally real analytic in the coordinates respectively.

The components
$$
(\tilde{\phi'})^k\circ F \circ \tilde{\phi}^{-1}, \quad k=1,\ldots,n,
$$
of the mapping $F$ in the introduced coordinates satisfy
$$
\Delta^n_{\tilde{\phi}^{-1*}g}(\tilde{\phi'})^k\circ F \circ \tilde{\phi}^{-1}=0.
$$
See~\cite{LS1} for details on this argument. Since the matrix field $\tilde{\phi}^{-1*}g$ is conformally real analytic, and $\Delta^n$ is conformally invariant, this proves the claim by elliptic regularity of quasilinear equations, see~\cite[Corollary 1.4]{KN}.
\end{proof}

\section{Density} \label{sec_density}

In this section we assume that $(M,g)$ is a compact connected oriented Riemannian manifold with boundary, and $\dim(M) \geq 2$. We also assume that $q \in C^{\infty}(M)$ and that $0$ is not a Dirichlet eigenvalue of $-\Delta+q$ in $M$ (i.e.\ the only solution $u \in H^1_0(M)$ of $(-\Delta+q)u = 0$ is $u=0$).

The next Runge approximation type result implies that one may solve $(-\Delta+q)u = 0$ in $M$ while approximately prescribing the Cauchy data of $u$ on a strict open subset of $\partial M$.

\begin{Proposition} \label{prop_runge_approximation_cauchy}
Let $\Gamma$ be a nonempty open subset of $\partial M$ satisfying $\overline{\Gamma} \neq \partial M$. Then the set 
\[
\{ \partial_{\nu} u|_{\overline{\Gamma}} \,;\, u \in C^{\infty}(M), \ (-\Delta+q)u = 0 \text{ in $M$}, \ u|_{\Gamma} = 0 \}
\]
is dense in $C^{\infty}(\overline{\Gamma})$.
\end{Proposition}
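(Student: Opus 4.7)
I would use Hahn-Banach duality combined with unique continuation, which is the standard route for such Runge-type approximation results. First, assume for contradiction that the set is not dense in $C^\infty(\overline{\Gamma})$. Then some nonzero continuous linear functional $\ell$ on $C^\infty(\overline{\Gamma})$ annihilates every $\partial_\nu u|_{\overline{\Gamma}}$ in the set; composing with restriction produces a distribution $\tilde{\ell}$ on $\partial M$ of finite order, supported in $\overline{\Gamma}$.

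Next, I would solve the dual Dirichlet problem $(-\Delta + q)v = 0$ in $M$, $v|_{\partial M} = \tilde{\ell}$ by transposition: since $0$ is not a Dirichlet eigenvalue, the classical Poisson operator $P : C^\infty(\partial M) \to C^\infty(\overline{M})$ extends to distributional data and produces $v$ in some negative Sobolev space on $M$, smooth on $\Int(M)$ by interior elliptic regularity. As $\tilde{\ell}$ vanishes on the nonempty open set $\partial M \setminus \overline{\Gamma}$ (using $\overline{\Gamma} \neq \partial M$), boundary elliptic regularity makes $v$ smooth up to this portion of the boundary, with $v = 0$ there.

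The key step is Green's identity, equivalently self-adjointness of the Dirichlet-to-Neumann map: for any $h \in C^\infty(\partial M)$ with Poisson extension $u = Ph$,
$$
\langle \partial_\nu v, h \rangle_{\partial M} = \langle \tilde{\ell}, \partial_\nu u \rangle_{\partial M},
$$
where the left-hand side defines $\partial_\nu v$ as a distribution on $\partial M$ via continuity of the classical map $h \mapsto \partial_\nu Ph$. Whenever $h|_\Gamma = 0$, the solution $u$ is admissible and the right-hand side equals $\ell(\partial_\nu u|_{\overline{\Gamma}}) = 0$ by hypothesis (using $\mathrm{supp}(\tilde{\ell}) \subset \overline{\Gamma}$). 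Varying $h$ over $C^\infty_c(\partial M \setminus \overline{\Gamma})$ forces $\partial_\nu v = 0$ as a distribution on $\partial M \setminus \overline{\Gamma}$.

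Finally, on the nonempty open set $\partial M \setminus \overline{\Gamma}$ the smooth function $v$ has vanishing Cauchy data. Extend $v$ by zero across this piece of the boundary into a slightly enlarged manifold $\widetilde{M}$, with $g$ and $q$ extended smoothly; the extension is smooth (since by iterating the equation $v$ is shown to vanish to infinite order at the boundary from the $M$ side) and solves $(-\Delta + q)v = 0$ on $\widetilde{M}$, vanishing on a nonempty open set outside $\overline{M}$. Weak unique continuation for second order elliptic operators combined with connectedness of $M$ then yields $v \equiv 0$ in $M$, whence $\tilde{\ell} = v|_{\partial M} = 0$, a contradiction. The main technical obstacle is giving Green's identity rigorous meaning for distributional boundary data; this is handled by the transposition definition of $v$ together with the classical continuity of the DN map on smooth data, which together define $\partial_\nu v$ as a distribution on $\partial M$ compatible with the classical formula.
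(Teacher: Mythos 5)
Your proposal is correct and follows essentially the same route as the paper: Hahn--Banach duality producing a distribution $\tilde{\ell}$ supported in $\overline{\Gamma}$, solving the dual Dirichlet problem with distributional boundary data (the paper does this via Lions--Magenes theory in Lemma~\ref{lemma_dirichlet_problem_negative_sobolev}, you via transposition, which is the same device), Green's identity to deduce vanishing Cauchy data on $\partial M \setminus \overline{\Gamma}$, and unique continuation from boundary Cauchy data. The only cosmetic difference is that the paper first establishes the $H^s$-version (Lemma~\ref{lemma_runge_approximation_sobolev}) and then invokes the observation that every distribution on $\partial M$ lies in some $H^{-s}(\partial M)$, whereas you work directly with $\tilde{\ell} \in \mathscr{D}'(\partial M)$; and the paper cites a unique continuation result while you make the extension-by-zero argument explicit. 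Both are valid and equivalent in substance.
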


As a consequence, for any $\varepsilon > 0$ there exists $w^n \in C^{\infty}(M)$ satisfying 
\[
(-\Delta+q)w^n = 0, \quad w^n|_{\Gamma} = 0,
\]
and $\norm{\partial_{\nu} w^n|_{\Gamma} - 1}_{L^{\infty}(\Gamma)} < \varepsilon$.

The proof of Proposition \ref{prop_runge_approximation_cauchy} requires a duality argument, Dirichlet problems with boundary data in negative order Sobolev spaces, and unique continuation.

\begin{Lemma} \label{lemma_dirichlet_problem_negative_sobolev}
Let $P$ the operator $C^{\infty}(\partial M) \to C^{\infty}(M)$ which maps $f$ to the unique solution of $(-\Delta+q)u = 0$ in $M$ satisfying $u|_{\partial M} = f$.

\begin{tehtratk}
\item[(a)] 
For any $s \in \mR$, $P$ has a bounded extension 
\[
P^{(s)}: H^s(\partial M) \to D^{s+1/2}(M)
\]
where $D^{s+1/2}(M)$ is a Hilbert space which has $C^{\infty}(M)$ as a dense subspace and is continuously contained in $H^{s+1/2}(M)$, and there are bounded trace operators 
\[
\gamma^{(s)}_j: D^{s+1/2}(M) \to H^{s-j}(\partial M), \qquad j=0,1,
\]
extending $\gamma_0: u \mapsto u|_{\partial M}$ and $\gamma_1: u \mapsto \partial_{\nu} u|_{\partial M}$ acting on $C^{\infty}(M)$. For any $f \in H^s(\partial M)$, the function $u = P^{(s)} f \in H^{s+1/2}(M)$ solves $(-\Delta+q)u = 0$ in $M$ in the sense of distributions, has boundary value $\gamma^{(s)}_0 u = f$, and satisfies $\gamma_1^{(s)} u \in H^{s-1}(\partial M)$.

\item[(b)]
If $f \in H^s(\partial M)$ is $C^{\infty}$ near some boundary point $p$, then there is a neighborhood $U$ of $p$ in $M$ so that $u = P^{(s)} f$ is $C^{\infty}$ in $U$.

\item[(c)]
One has the following unique continuation statement: if $\Gamma \subset \partial M$ is a nonempty open set and if $v = P^{(s)} f$ for some $f \in H^s(\partial M)$ satisfies 
\[
v|_{\Gamma} = \partial_{\nu} v|_{\Gamma} = 0,
\]
then $v = 0$.
\end{tehtratk}
\end{Lemma}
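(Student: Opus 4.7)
The plan is to handle the three parts in order, with (a) providing the core construction via transposition and (b), (c) reducing to elliptic regularity and unique continuation. Write $A = -\Delta + q$. Since $0$ is not a Dirichlet eigenvalue, boundary elliptic regularity shows that $A$ restricted to $\{v \in H^{k+2}(M) : v|_{\partial M} = 0\}$ is an isomorphism onto $H^k(M)$ for every integer $k \geq 0$. For smooth $f$ on $\partial M$ and $\phi \in C^\infty(M)$, Green's identity applied to $u = Pf$ and the unique $v$ solving $Av = \phi$, $v|_{\partial M} = 0$ yields $\langle Pf, \phi\rangle_M = \langle f, \partial_\nu v\rangle_{\partial M}$. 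Choosing $k$ large enough that $H^{k+1/2}(\partial M) \subset H^{-s}(\partial M)$, the trace $\partial_\nu v$ is controlled in $H^{-s}(\partial M)$ by $\phi$ in a sufficiently negative Sobolev space, so the right-hand side extends by continuity to a bounded bilinear form on $H^s(\partial M) \times H^{-s-1/2}(M)$. This defines $P^{(s)}f \in H^{s+1/2}(M)$ with $A P^{(s)} f = 0$ in the distribution sense.

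I would set
\[
D^{s+1/2}(M) = \{u \in H^{s+1/2}(M) : Au = 0 \text{ in } M\},
\]
a closed subspace of $H^{s+1/2}(M)$. The traces $\gamma_j^{(s)}$ are supplied by the Lions-Magenes trace theorem for solutions of elliptic equations: although $H^{s+1/2}$ regularity alone is too weak to define boundary traces when $s < 0$, the PDE $Au = 0$ compensates via an interior-to-boundary estimate and delivers bounded maps $\gamma_0^{(s)}: D^{s+1/2}(M) \to H^s(\partial M)$ and $\gamma_1^{(s)}: D^{s+1/2}(M) \to H^{s-1}(\partial M)$ extending the classical ones. Density of $C^\infty(M)$ in $D^{s+1/2}(M)$ follows by approximating $f \in H^s(\partial M)$ by smooth boundary data $f_n$: then $Pf_n = P^{(s)} f_n \to P^{(s)} f$ in $H^{s+1/2}(M)$, and uniqueness (from the non-eigenvalue hypothesis applied at the $D^{s+1/2}$ level) shows that every $u \in D^{s+1/2}(M)$ equals $P^{(s)}(\gamma_0^{(s)} u)$.

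For (b), split $f = f_1 + f_2$ with $f_1 \in C^\infty(\partial M)$ agreeing with $f$ in a neighborhood $V$ of $p$ on $\partial M$ and $f_2$ vanishing on $V$. Then $P^{(s)} f_1 = P f_1 \in C^\infty(M)$ by classical elliptic regularity, while $u_2 := P^{(s)} f_2$ solves $A u_2 = 0$ with boundary trace vanishing on $V$. In a boundary chart near $p$, $u_2$ has smooth (identically zero) Dirichlet data, so boundary elliptic regularity yields $u_2 \in C^\infty$ on some neighborhood $U$ of $p$ in $M$.

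Part (c) then follows from (b) and classical weak unique continuation. Since $\gamma_0^{(s)} v|_\Gamma = 0$ is smooth on $\Gamma$, (b) shows $v \in C^\infty$ up to $\partial M$ in a neighborhood of each point of $\Gamma$, with vanishing Cauchy data there. Extending $v$ by zero across $\Gamma$ into a slightly larger ambient domain (after extending $q$ smoothly) produces a $C^{1,1}$ distributional solution of $(-\Delta + \tilde{q})\tilde{v} = 0$; the Aronszajn-Cordes theorem then forces $\tilde{v} \equiv 0$ in a neighborhood of $\Gamma$, and interior weak unique continuation propagates this to $v \equiv 0$ on the connected manifold $M$. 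The main obstacle is the functional-analytic setup in (a), especially defining $D^{s+1/2}(M)$ and invoking Lions-Magenes-style trace theory for arbitrary $s \in \mathbb{R}$; the remaining parts are routine once this is in place.
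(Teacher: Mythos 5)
Your treatments of (a) and (c) are essentially the paper's: part (a) is a transposition/duality construction in the Lions--Magenes framework (the paper simply cites \cite{LionsMagenes}, Chapter 2, Section 7.3), and part (c) reduces to unique continuation for smooth solutions with vanishing Cauchy data, exactly as in the paper. One small inaccuracy: the space $D^{s+1/2}(M)$ in the statement is required to contain $C^\infty(M)$ as a dense subspace, but your candidate $\{u \in H^{s+1/2}(M): Au=0\}$ contains only smooth \emph{solutions}, not all of $C^\infty(M)$; the Lions--Magenes space is defined differently (roughly, as a completion of $C^\infty(\overline{M})$ in a norm involving $u$ and $Au$).

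The genuine gap is in part (b). After splitting $f = f_1 + f_2$ with $f_2 \equiv 0$ near $p$, you assert that for $u_2 = P^{(s)}f_2$, ``in a boundary chart near $p$, $u_2$ has smooth (identically zero) Dirichlet data, so boundary elliptic regularity yields $u_2 \in C^\infty$ on some neighborhood $U$ of $p$.'' This does not follow from standard boundary elliptic regularity. For $s$ very negative, $u_2 \in H^{s+1/2}(M)$ is a priori only a rough distribution near $\partial M$, and boundary regularity bootstrap results require some initial positive-order regularity (e.g.\ $u_2 \in H^1$ near the boundary) before they can be iterated. The vanishing of the Dirichlet trace of $u_2$ on $V$ does not by itself control how the very rough data $f_2$ supported away from $p$ influences $u_2$ near $p$. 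The actual content of (b) is precisely the pseudolocal/off-diagonal smoothness of the solution (Poisson) operator: one needs to write the solution near the boundary as $u^\sharp = Q((1-\chi)f)$ where $Q$ is a parametrix built from a family of pseudodifferential operators on $\partial M$, and then use that $\psi \circ Q(y) \circ (1-\chi)$ is smoothing (of class $\mathrm{Op}(S^{-\infty})$) because $\psi$ and $1-\chi$ have disjoint supports. The paper carries this out using the parametrix construction from \cite[Theorem 12.6]{Taylor2} and uniform symbol bounds in the collar variable $y$; without some version of this microlocal argument, the conclusion of (b) is unjustified.
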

\begin{proof}
(a) follows from \cite[Chapter 2]{LionsMagenes}, with the final result given in Section 7.3 of Chapter 2.

For (b), writing $f = \chi f + (1-\chi)f$ where $\chi \in C^{\infty}(\partial M)$ satisfies $\chi = 1$ near $p$ and $\chi f \in C^{\infty}(\partial M)$, and noting that $P^{(s)}(\chi f) \in C^{\infty}(M)$, it is enough to show that $u = P^{(s)}((1-\chi) f)$ is $C^{\infty}$ near $p$. We use a parametrix for this boundary value problem \cite[Theorem 12.6]{Taylor2}: there is a collar neighborhood $\mathcal{C} = [0,1] \times \partial M$ and a distribution $u^{\sharp}$ in $\mathcal{C}$ with 
\[
u^{\sharp} - u \in C^{\infty}(\mathcal{C})
\]
so that, using coordinates $(y,x)$ in $\mathcal{C}$, one has $u^{\sharp} = Q((1-\chi)f)$ where $Qh(y,\,\cdot\,) = Q(y) h$ and $Q(y)$ is a pseudodifferential operator on $\partial M$ whose symbol $q(y,x,\xi)$ satisfies, for some $C_1 > 0$, 
\begin{gather*}
q(y,x,\xi) = b(y,x,\xi) e^{-C_1 y \langle \xi \rangle}, \\
y^k D_y^l b(y,\,\cdot\,) \text{ is bounded in $S^{-k+l}_{1,0}(\partial M)$ for $y \in [0,1]$.}
\end{gather*}
(\cite{Taylor2} does not discuss $Q$ acting on negative order Sobolev spaces, but this can be justified since using \cite[Lemma 12.5]{Taylor2} the operator $D_y^l A(y)$ with $A(y)$ as in \cite[Proposition 12.4]{Taylor2} maps $H^{-r}(\partial M)$ to the mixed norm space $L^2_y H^{-r+1/2-l}_x(\mathcal{C})$ for $r \geq 0$.) Suppose $\psi \in C^{\infty}(\partial M)$ satisfies $\psi = 1$ near $p$ and $\mathrm{supp}(\psi) \subset \subset \{ \chi = 1 \}$. It is enough to show that 
\[
\psi(x) Q( (1-\chi(x)) f) \in C^{\infty}(\mathcal{C}).
\]
Since each $Q(y)$ is in $\mathrm{Op}(S^0_{1,0}(\partial M))$ and thus has the pseudolocal property, the operators $\psi(x) \circ Q(y) \circ  (1-\chi(x))$ are in $\mathrm{Op}(S^{-\infty}_{1,0}(\partial M))$. Moreover, these operators and their $y$-derivatives up to a fixed order have uniform symbol bounds for $y \in [0,1]$. This shows the required result.

To prove (c), we note that by (b) the function $v$ is $C^{\infty}$ near any compact subset $K$ of $\Gamma$. Thus $v$ is a smooth solution of $(-\Delta+q) v = 0$ near $K$ with $v|_{\Gamma} = \partial_{\nu} v|_{\Gamma} = 0$, and the unique continuation principle~\cite{Isakov_book} implies that $v \equiv 0$ near $K$. Since $v$ solves $(-\Delta+q)v = 0$ in $M$, it follows that $v \equiv 0$.
\end{proof}

\begin{Lemma} \label{lemma_runge_approximation_sobolev}
Let $\Gamma$ be a nonempty open subset of $\partial M$ with $\overline{\Gamma} \neq \partial M$. Let also $s > 0$. Then the set 
\[
S = \{ \partial_{\nu} u|_{\Gamma} \,;\, u \in H^{s+3/2}(M), \ (-\Delta+q)u = 0 \text{ in $M$}, \ u|_{\Gamma} = 0 \}
\]
is dense in $H^s(\Gamma)$.
\end{Lemma}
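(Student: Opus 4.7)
The approach is the classical Hahn--Banach duality argument for Runge-type approximation, carried out in the negative Sobolev setting using the Dirichlet theory of Lemma~\ref{lemma_dirichlet_problem_negative_sobolev}. First I identify the topological dual of $H^s(\Gamma)$ with $\widetilde{H}^{-s}(\Gamma)$, the closed subspace of $H^{-s}(\partial M)$ consisting of distributions supported in $\overline{\Gamma}$. Suppose for contradiction that $S$ is not dense. Then by Hahn--Banach there is a nonzero $\phi \in \widetilde{H}^{-s}(\Gamma)$ with $\langle \phi, \partial_{\nu} u|_{\overline{\Gamma}}\rangle = 0$ for every admissible $u$. Extend $\phi$ by zero to $\widetilde{\phi} \in H^{-s}(\partial M)$ and set
\[
v = P^{(-s)}\widetilde{\phi} \in D^{-s+1/2}(M),
\]
so that $(-\Delta+q)v = 0$ in $M$, $\gamma_0^{(-s)} v = \widetilde{\phi}$, and $\gamma_1^{(-s)} v \in H^{-s-1}(\partial M)$.

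The crux is a generalized Green's identity: for every $u \in H^{s+3/2}(M)$ solving $(-\Delta+q)u = 0$ in $M$,
\[
\langle \gamma_1 u, \widetilde{\phi}\rangle_{H^s(\partial M), H^{-s}(\partial M)} \;=\; \langle \gamma_0 u, \gamma_1^{(-s)} v\rangle_{H^{s+1}(\partial M), H^{-s-1}(\partial M)}.
\]
I would establish this by density: choose $\phi_n \in C^{\infty}(\partial M)$ with $\phi_n \to \widetilde{\phi}$ in $H^{-s}(\partial M)$, let $v_n = P\phi_n \in C^{\infty}(M)$, apply the smooth Green's identity to $(u,v_n)$, and pass to the limit using the boundedness of $P^{(-s)}$ and of $\gamma_0^{(-s)}, \gamma_1^{(-s)}$ on $D^{-s+1/2}(M)$ from Lemma~\ref{lemma_dirichlet_problem_negative_sobolev}(a), together with continuity of the ordinary traces $\gamma_0, \gamma_1$ on $H^{s+3/2}(M)$.

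Assume now $u|_{\Gamma} = 0$, and set $\Gamma' = \partial M \setminus \overline{\Gamma}$. Then $\gamma_0 u$ is supported in $\overline{\Gamma'}$, so it defines an element of $\widetilde{H}^{s+1}(\Gamma')$; and since $\widetilde{\phi}$ is supported in $\overline{\Gamma}$, the left side of the Green's identity equals $\langle \phi, \partial_{\nu} u|_{\overline{\Gamma}}\rangle = 0$ by hypothesis. Thus $\langle \gamma_0 u, \gamma_1^{(-s)} v|_{\Gamma'}\rangle = 0$ for every such $u$. As $\gamma_0 u$ sweeps out all of $\widetilde{H}^{s+1}(\Gamma')$ when $u = P^{(s+1)}f$ with $f$ arbitrary in that space, the natural duality $\widetilde{H}^{s+1}(\Gamma') \times H^{-s-1}(\Gamma')$ forces $\gamma_1^{(-s)} v|_{\Gamma'} = 0$. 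Since also $v|_{\Gamma'} = \widetilde{\phi}|_{\Gamma'} = 0$, Lemma~\ref{lemma_dirichlet_problem_negative_sobolev}(c) (applied with $\Gamma'$ in place of $\Gamma$, which is legitimate because $\Gamma' \neq \emptyset$ by the hypothesis $\overline{\Gamma} \neq \partial M$) yields $v \equiv 0$, hence $\widetilde{\phi} = 0$ and $\phi = 0$, the desired contradiction.

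The principal technical obstacle is the generalized Green's identity in this low-regularity regime: because $v$ lies only in $H^{-s+1/2}(M)$ and is not in $H^1(M)$, the classical integration-by-parts formula does not apply directly, and I must instead exploit the abstract trace theory built into $D^{-s+1/2}(M)$ and the density of smooth solutions to pass from the classical identity to the paired distributional form above. Once that is secured, the unique continuation statement in Lemma~\ref{lemma_dirichlet_problem_negative_sobolev}(c) does all the remaining work.
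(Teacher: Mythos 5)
Your proposal is correct and follows essentially the same route as the paper's proof: a Hahn--Banach duality argument identifying the annihilating functional with a distribution $f$ (your $\widetilde{\phi}$) in $H^{-s}(\partial M)$ supported in $\overline{\Gamma}$, the solution $v = P^{(-s)}f$ of the negative-order Dirichlet problem, a generalized Green's identity justified by approximating $f$ with smooth boundary data and passing to the limit, and finally unique continuation (Lemma~\ref{lemma_dirichlet_problem_negative_sobolev}(c)) on $\partial M \setminus \overline{\Gamma}$. The only cosmetic difference is that you phrase the duality via the space $\widetilde{H}^{-s}(\Gamma)$ and sweep out the test data $\gamma_0 u$ over $\widetilde{H}^{s+1}(\Gamma')$, whereas the paper works directly with the functional $T$ and uses $u = Ph$ with $h \in C^{\infty}(\partial M)$, $h|_{\Gamma} = 0$; both are sound.
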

\begin{proof}
Let $T$ be a bounded linear functional on $H^s(\Gamma)$ satisfying 
\[
T(\partial_{\nu} u|_{\Gamma}) = 0, \qquad u \in S.
\]
It is enough to show that $T = 0$. Define 
\[
\tilde{T}: H^s(\partial M) \to \mR, \ \ \tilde{T}(h) = T(h|_{\Gamma}).
\]
Then $\abs{\tilde{T}(h)} \leq C \norm{h|_{\Gamma}}_{H^s(\Gamma)} \leq C \norm{h}_{H^s(\partial M)}$, so $\tilde{T}$ is bounded on $H^s(\partial M)$ and by duality there exists $f \in H^{-s}(\partial M)$ satisfying 
\[
\tilde{T}(h) = (f, h), \qquad h \in H^s(\partial M).
\]
(This is the place where the negative order Sobolev spaces appear in this argument.) If $\mathrm{supp}(h) \subset \partial M \setminus \overline{\Gamma}$, then $(f,h) = \tilde{T}(h) = T(h|_{\Gamma}) = 0$. It follows that $\mathrm{supp}(f) \subset \overline{\Gamma}$.

Choose a sequence $(f_j) \subset C^{\infty}(\partial M)$ with $f_j \to f$ in $H^{-s}(\partial M)$. Let $v_j = P f_j$ and $v = P^{(-s)} f$ with $P^{(-s)}$ as in Lemma \ref{lemma_dirichlet_problem_negative_sobolev}. Then for any $u \in S$, integrating by parts implies that  
\begin{align*}
0 &= T(\partial_{\nu} u|_{\Gamma}) = \tilde{T}(\partial_{\nu} u) = (\partial_{\nu} u, f) \\
 &= \lim\,(\partial_{\nu} u, v_j)_{\partial M} = \lim\,\left( (\nabla u, \nabla v_j)_M + (\Delta u, v_j)_M\right) \\
 &= \lim\,(u, \partial_{\nu} v_j)_{\partial M} = (u, \partial_{\nu} v)_{\partial M}.
\end{align*}
Here we used the facts that $(-\Delta+q)v_j = (-\Delta+q)u = 0$ and that $\partial_{\nu} P f_j|_{\partial M} \to \partial_{\nu} v|_{\partial M}$ in $H^{-s-1}(\partial M)$. Since this is true for $u = P h$ for any $h \in C^{\infty}(\partial M)$ with $h|_{\Gamma} = 0$, we see that $\partial_{\nu} v|_{\partial M \setminus \overline{\Gamma}} = 0$. Now $v = P^{(-s)} f \in H^{-s+1/2}(M)$ solves 
\[
(-\Delta+q)v = 0 \text{ in $M$}, \quad v|_{\partial M \setminus \overline{\Gamma}} = \partial_{\nu} v|_{\partial M \setminus \overline{\Gamma}} = 0.
\]
The unique continuation statement in Lemma \ref{lemma_dirichlet_problem_negative_sobolev} shows that $v=0$, which implies $f=0$ and $T=0$ as required.
\end{proof}

\begin{proof}[Proof of Proposition \ref{prop_runge_approximation_cauchy}]
The proof is analogous to that of Lemma \ref{lemma_runge_approximation_sobolev} with the space $H^s(\Gamma)$ replaced by $C^{\infty}(\overline{\Gamma})$ etc. One also uses that the dual of $C^{\infty}(\partial M)$ is $\mathscr{D}'(\partial M)$ and that any element of $\mathscr{D}'(\partial M)$ is in $H^{-s}(\partial M)$ for some $s$.
\end{proof}

\bibliographystyle{alpha}

\end{document}